\def\e{\varepsilon}
\def\P{\mathbb P}
\def\E{\mathbb E}
\def \R{\mathbb R}
\newtheorem{lem}{Lemma}
\newtheorem{thm}{Theorem}
\newtheorem{cor}[thm]{Corollary}
\newtheorem{prop}[thm]{Proposition}
\theoremstyle{remark}
\numberwithin{equation}{section}
\numberwithin{lem}{section}
\numberwithin{thm}{section}
\begin{document}

\title{\bf On the discrepancy of random subsequences of $\{n\alpha\}$}
\author{Istv\'an Berkes\footnote{ Alfr\'ed R\'enyi Institute of Mathematics, 1053 Budapest, Re\'altanoda u.\ 13-15, Hungary. \mbox{e-mail}: \texttt{berkes.istvan@renyi.hu}. Research supported by NKFIH grant K 125569.} \, and Bence Borda\footnote{Alfr\'ed R\'enyi Institute of Mathematics, 1053 Budapest, Re\'altanoda u.\ 13-15, Hungary. \mbox{e-mail}: \texttt{borda.bence@renyi.hu}
}}
\date{}
\maketitle

\abstract{For irrational $\alpha$, $\{n\alpha\}$ is uniformly distributed mod 1 in the Weyl sense, and the asymptotic behavior of its discrepancy is completely known. In contrast, very few precise results exist for the discrepancy of subsequences $\{n_k \alpha\}$, with the exception of metric results for exponentially growing $(n_k)$.  It is therefore natural to consider random $(n_k)$, and in this paper we give nearly optimal bounds for the discrepancy of $\{n_k \alpha\}$ in the case when the gaps $n_{k+1}-n_k$ are independent, identically distributed, integer-valued random variables. As we will see, the discrepancy behavior is determined by a delicate interplay between the distribution of the gaps $n_{k+1}-n_k$ and the rational approximation properties  of $\alpha$. We also point out an interesting critical phenomenon, a sudden change of the order of magnitude of the discrepancy of $\{n_k \alpha\}$ as the Diophantine type of $\alpha$ passes through a certain critical value.}

\bigskip\bigskip
\noindent {\bf Keywords:} discrepancy, Diophantine approximation, random walk, continued fractions, critical phenomena \\[1ex]
\noindent {\bf MSC 2010:} 11K38, 11L07,  11J70, 60G50

\section{Introduction}\label{intro}

An infinite sequence $(x_k)$ of real numbers is called \textit{uniformly distributed mod} 1 if for every pair $a, b$ of
real numbers with $0\le a < b \le 1$ we have
$$\lim_{N\to\infty} \frac{1}{N}\sum_{k=1}^N I_{[a, b)} (\{x_k\})=b-a.$$
Here $\{ \cdot\}$ denotes fractional part, and $I_{[a,b)}$ is the indicator function of the interval $[a, b)$.
By Weyl's criterion \cite{WE}, a sequence $(x_k)$ is uniformly distributed mod 1 if and only if
$$\lim_{N\to\infty} \frac{1}{N} \sum_{k=1}^N e^{2\pi i hx_k}=0$$
for all integers $h \ne 0$. In particular, the sequence $\{n \alpha\}$ is uniformly distributed mod 1 for any irrational $\alpha$. It also follows that $\{n_k \alpha\}$ is uniformly distributed mod 1 for all irrational $\alpha$ for
$n_k = k^b \log^c k$ ($0 < b < 1, c\in \mathbb R$), $n_k = \log^c k$  ($c > 1$), $n_k=P(k)$, where $P$ is a nonconstant polynomial with integer coefficients. See Kuipers and Niederreiter \cite{KN} for further examples.

A natural measure of the mod 1 uniformity of an infinite sequence $(x_k)$ is the \textit{discrepancy} defined by
$$D_N(x_k) := \sup_{0\le a<b \le 1} \left| \frac{1}{N} \sum_{k=1}^N I_{[a, b)} (\{x_k\}) -(b-a)\right| \quad (N=1, 2, \ldots).$$
By Diophantine approximation theory, the order of magnitude of the discrepancy $D_N(\{n\alpha\})$ is closely connected with the rational approximation properties of $\alpha$. By a standard definition (see e.g.\ \cite{KN}), the {\it type} $\gamma$ of an irrational number $\alpha$  is the supremum of all $c$ such that $$\liminf_{q \to\infty} q^c \| q \alpha\|=0,$$
where $\left\| t \right\|$ denotes the distance from a real number $t$ to the nearest integer.
Then $\gamma\ge 1$ for all irrational $\alpha$ and by classical results (see e.g.
 \cite[Chapter 3, Theorems 3.2 and 3.3]{KN}) if $\alpha$ has finite type $\gamma$, then
\begin{equation}\label{af2}
D_N( \{n \alpha\}) =O( N^{-1/\gamma+\varepsilon}), \qquad D_N( \{n \alpha\}) =\Omega( N^{-1/\gamma-\varepsilon})
\end{equation}
for any $\varepsilon>0$.  However, the type is a rather crude measure of rational approximation and a more precise characterization can be obtained  by using a nondecreasing positive function $\psi$ such that
\begin{equation}\label{strong}
0<\liminf_{q\to\infty} \psi (q) \|q \alpha\| <\infty.
\end{equation}
Note that e.g.\ $\psi (q) = \max_{1 \le k \le q} 1/\left\| k \alpha \right\|$ satisfies \eqref{strong}, but $\psi$ is not uniquely determined by $\alpha$. For the sake of simplicity, in this paper we will focus on the case when \eqref{strong} is satisfied with $\psi (q)=q^{\gamma}$ for some $\gamma \ge 1$.
We shall say in this case that $\alpha$ {\it has strong type $\gamma$}.
As a minor change of the proof of (\ref{af2}) shows, in this case (\ref{af2}) can be improved to
$$D_N( \{n \alpha\}) =O( N^{-1/\gamma}), \qquad D_N( \{n \alpha\}) =\Omega( N^{-1/\gamma})$$
for $\gamma>1$ and
$$ D_N( \{n \alpha\}) =O\left( \frac{\log N}{N}\right)$$
for $\gamma=1$. In view of Schmidt's theorem (see e.g.\ \cite[p.\ 109]{KN}), the last bound is also optimal. Note that for any irrational $\alpha$, \eqref{strong} does not hold with any function $\psi (q) = o(q)$, and that it holds with $\psi(q)=q$ if and only if the partial quotients $a_k$ in the continued fraction of $\alpha$ remain bounded. Such irrational numbers are called \textit{badly approximable}.

In contrast to the precise results for $D_N(\{n\alpha\})$ above, much less is known about $D_N(\{n_k \alpha\})$ for general $(n_k)$. By a result of Philipp \cite{PH}, if $(n_k)$ is a sequence of positive reals with
$$n_{k+1}/n_k \ge q >1 \quad (k=1, 2, \ldots),$$
then $D_N(\{n_k\alpha\})$ satisfies the \textit{law of the iterated logarithm} (LIL):
\begin{equation}\label{ph75}
0<\limsup_{N\to\infty} \sqrt{\frac{N}{\log\log N}}  D_N (\{n_k \alpha\})<\infty
\end{equation}
for almost all $\alpha$ in the sense of the Lebesgue measure. For general $(n_k)$ growing more slowly, even sharp metric results are not available. R.\ Baker \cite{BA} proved that if $(n_k)$ is an increasing sequence of positive integers, then for any $\e >0$,
\begin{equation}\label{baker}
D_N (\{n_k \alpha \}) = O \left( N^{-1/2} (\log N)^{3/2 +\e} \right)
\end{equation}
for almost all $\alpha$, but it is not known whether the exponent $3/2$ can be improved. In the case when  $n_k$ is a polynomial with integer coefficients in $k$ of degree at least 2, Aistleitner and Larcher \cite{AL} proved the lower bound $D_N (\{n_k \alpha \}) = \Omega \left( N^{-1/2-\e} \right)$, valid  for any $\e>0$ and almost every $\alpha$. However, all these are metric results and do not give information on $D_N(\{n_k \alpha\})$ for any specific irrational $\alpha$.

Thus it is natural to consider random sequences $(n_k)$, and in this paper we consider the case when the gaps $n_{k+1}-n_k$ are independent, identically distributed (i.i.d.) random variables. That is, we are dealing with the discrepancy $D_N(\{S_k \alpha\})$, where $S_k=\sum_{j=1}^k X_j$ with i.i.d.\ random variables $X_1, X_2, \ldots$, i.e.\ $S_k$ is a random walk. In a recent paper \cite{BB} the authors proved the law of the iterated logarithm
\[ 0 < \limsup_{N \to \infty} \frac{\left| \sum_{k=1}^N e^{2 \pi i S_k \alpha} \right|}{\sqrt{N \log \log N}} < \infty \quad \text{a.s.} \]
whenever $\exp (2 \pi i X_1 \alpha)$ is nondegenerate (i.e.\ it does not equal a constant with probability 1). Note that a.s.\ (almost surely) means that the given event has probability 1 in the space of the random walk $S_k$. From Koksma's inequality \cite[Chapter 2, Corollary 5.1]{KN}, we thus obtain the following general lower estimate.

\begin{prop}\label{generallower} Let $X_1, X_2, \dots$ be i.i.d.\ random variables, let $S_k=\sum_{j=1}^k X_j$ and let $\alpha \in \mathbb{R}$. If $\exp (2 \pi i X_1 \alpha)$ is nondegenerate, then
\[ D_N(\{ S_k \alpha \}) = \Omega \left( \sqrt{\frac{\log \log N}{N}} \right) \quad \text{a.s.} \]
\end{prop}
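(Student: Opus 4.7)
The plan is to combine the law of the iterated logarithm from \cite{BB} quoted just above the proposition with the one-dimensional Koksma inequality \cite[Chapter 2, Corollary 5.1]{KN}. Koksma's inequality asserts that for any real numbers $y_1, \dots, y_N$ and any complex-valued function $f$ of bounded variation on $[0,1]$,
\[
\left| \frac{1}{N} \sum_{k=1}^N f(\{y_k\}) - \int_0^1 f(x)\, dx \right| \le V(f) \cdot D_N(y_k),
\]
where $V(f)$ denotes the total variation. Applied to $f(x) = e^{2\pi i x}$, which has vanishing mean and finite total variation, and to the sequence $y_k = S_k \alpha$, this yields
\[
\left| \sum_{k=1}^N e^{2 \pi i S_k \alpha} \right| \le C \cdot N \cdot D_N(\{S_k \alpha\})
\]
for an absolute constant $C$.

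Rearranging and dividing by $\sqrt{N \log \log N}$ gives
\[
\sqrt{\frac{N}{\log \log N}}\, D_N(\{S_k \alpha\}) \ge \frac{1}{C} \cdot \frac{\bigl| \sum_{k=1}^N e^{2\pi i S_k \alpha} \bigr|}{\sqrt{N \log \log N}}.
\]
Taking $\limsup_{N \to \infty}$ on both sides and invoking the LIL of \cite{BB}, which is applicable precisely because $\exp(2\pi i X_1 \alpha)$ is nondegenerate, the right-hand side has a strictly positive almost sure limsup. Hence so does the left-hand side, which is exactly the statement that $D_N(\{S_k \alpha\}) = \Omega(\sqrt{\log \log N / N})$ a.s.

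There is essentially no obstacle in this proof: it is a direct conversion of the known lower bound for the exponential sum into a lower bound for the discrepancy via Koksma's inequality. The nondegeneracy hypothesis enters exactly once, to legitimize the use of the LIL from \cite{BB}, and the unspecified constant $C$ from Koksma's inequality is absorbed into the $\Omega$-notation.
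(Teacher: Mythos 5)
Your proposal is correct and follows exactly the route the paper intends: the paper derives Proposition \ref{generallower} precisely by combining the LIL for $\sum_{k=1}^N e^{2\pi i S_k\alpha}$ from \cite{BB} with Koksma's inequality, which is what you do (the only cosmetic point being that Koksma's inequality is usually stated for real-valued $f$, so one applies it to the real and imaginary parts of $e^{2\pi i x}$ separately, which only changes the constant).
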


The sharpness of Proposition \ref{generallower} follows from a result of Schatte \cite{SCH3}, who proved that if
\begin{equation}\label{SCH}
\sup_{0\le x \le 1} | \P (\{S_k\alpha\}<x)-x| = O(k^{-5/2}),
\end{equation}
then we have
\begin{equation}\label{LILdiscr}
0<\limsup_{N\to\infty} \sqrt{\frac{N}{\log\log N}} D_N(\{S_k \alpha\})<\infty \quad \text{a.s.}
\end{equation}
Condition (\ref{SCH}) is satisfied for all $\alpha \neq 0$ if the distribution of $X_1$ is absolutely continuous, in which case the convergence speed in (\ref{SCH}) is exponential. Berkes and Raseta \cite{BR} showed that in the absolutely continuous case the LIL (\ref{LILdiscr}) also holds for the $L_p$ discrepancy of $\{S_k \alpha\}$, $1\le p<\infty$, and for other functionals of the path $\{S_k \alpha\}, 1\le k \le N$.  Improving results of Schatte \cite{SCH1} and Su \cite{SU}, in \cite{BB2} we gave optimal bounds for the quantity on the left hand side of (\ref{SCH}) in the case when $X_1$ is an integer-valued random variable having a finite variance, or having heavy tails satisfying
\begin{equation}\label{heavy}
c_1 x^{-\beta} \le \P (|X_1| \ge x) \le c_2 x^{-\beta}
\end{equation}
for all $x>0$ with some constants $c_1, c_2>0$ and $0<\beta<2$. These results imply that the LIL (\ref{LILdiscr}) also holds if $\alpha$ has strong type $\gamma$ and $X_1$ is an integer-valued random variable satisfying (\ref{heavy}) with $\beta \le 2/(5\gamma)$ (see the last paragraph of Subsection \ref{HT}). In this  case $S_n$ grows, in a stochastic sense, with  the polynomial speed $n^{1/\beta}$ and this result can be considered as the stochastic analogue of Philipp's lacunary result (\ref{ph75}).
On the other hand, the results of  \cite{BB2} also show that (\ref{SCH}) cannot hold if $X_1$ has a finite variance, in which case $S_n$ grows at most linearly.  In this case the problem of asymptotic behavior of $D_N(\{S_k \alpha\})$ becomes considerably harder and will be studied in the present paper.

Upper bounds for $D_N(\{S_k \alpha\})$ for general random walks in terms of the growth rate of the sums
$$ \sum_{h=1}^H \frac{1}{h|1-\varphi(2 \pi h\alpha)|} \quad \text{and} \quad \sum_{h=1}^H \frac{1}{h|1-\varphi(2 \pi h\alpha)|^{1/2}} $$
were given in Weber \cite{W} and Berkes and Weber \cite{BW}. Here $\varphi$ denotes the characteristic function of $X_1$. In particular, in \cite{W} it is shown that if $X_1$ is integer-valued, $S_k/k^{1/\beta}$ converges in distribution to a stable law with parameter $0<\beta<1$ and $\alpha$ satisfies $\left\| q \alpha \right\| \ge C q^{-\gamma}$ for every $q \in \mathbb{N}$ with some $\gamma >1$ and $C>0$, then
\begin{equation}\label{Weberbound}
D_N( \{S_k\alpha\})=O \left( N^{- 1/(1+\gamma)} \log^{2+\varepsilon} N\right) \quad \text{a.s.}
\end{equation}
for any $\varepsilon>0$. The same upper bound holds if instead of the distributional convergence of $S_k/k^{1/\beta}$ we assume $\mathbb{E}X_1 \neq 0$ and $\mathbb{E}|X_1|<\infty$. For nearly optimal improvements of this estimate, see Propositions \ref{mainprop1} and \ref{mainprop2} below.

The main focus of this paper is to study the discrepancy of $\{ S_k \alpha \}$ in the case when $X_1$ is an integer-valued random variable, and $\alpha$ is irrational. The most interesting case is $X_1>0$, when $\{ S_k \alpha \}$ is in fact a random subsequence of $\{n \alpha \}$, but in general we will allow $X_1$ to take negative integers as well. Before we formulate our general results, we discuss here the simple special case when $X_1$ takes the values 1 and 2 with probability $1/2$ each. The corresponding sequence $\{ S_k \alpha \}$ is arguably the simplest random subsequence of $\{ n \alpha \}$.

\begin{prop}\label{mainprop1} Let $X_1, X_2, \dots$ be i.i.d.\ random variables such that $\P (X_1=1)=\P (X_1=2)=1/2$, let $S_k=\sum_{j=1}^k X_j$, and let $\alpha \in \R$ be irrational.
\begin{enumerate}
\item[(i)] If $\left\| q \alpha \right\| \ge C q^{-2}$ for every $q \in \mathbb{N}$ with some constant $C>0$, then $D_N=D_N(\{ S_k \alpha \})$ satisfies
\[ D_N = O \left( \sqrt{\frac{\log \log N}{N}} \log N \right), \quad D_N = \Omega \left( \sqrt{\frac{\log \log N}{N}} \right) \quad \text{a.s.} \]

\item[(ii)] If $0< \liminf_{q \to \infty} q^{\gamma} \left\| q \alpha \right\| < \infty$ with some $\gamma >2$, then $D_N=D_N(\{ S_k \alpha \})$ satisfies
\[ D_N = O \left( \left( \frac{\log \log N}{N} \right)^{1/\gamma} \right), \quad D_N = \Omega \left( \frac{1}{N^{1/\gamma}} \right) \quad \text{a.s.} \]
\end{enumerate}
\end{prop}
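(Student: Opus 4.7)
The plan is to derive the upper bounds from the Erd\H os--Tur\'an inequality combined with a pathwise analysis of the random exponential sums $T_h(N):=\sum_{k=1}^N e^{2\pi i h S_k\alpha}$, while the lower bound in (i) will follow immediately from Proposition \ref{generallower} and the lower bound in (ii) will come from a direct Diophantine construction using a convergent denominator of $\alpha$.

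First I would apply the Erd\H os--Tur\'an inequality
$$D_N \ll \frac{1}{H} + \sum_{h=1}^H \frac{1}{h}\cdot\frac{|T_h(N)|}{N},$$
and bound each $|T_h|$ pathwise. Writing $\varphi_h := \E e^{2\pi i h X_1\alpha} = \tfrac12(e^{2\pi i h\alpha}+e^{4\pi i h\alpha})$, the factorization $1-\varphi_h = \tfrac12(1-e^{2\pi i h\alpha})(2+e^{2\pi i h\alpha})$ gives $|1-\varphi_h|\asymp\|h\alpha\|$. Decompose $T_h = A_h + M_h$ with deterministic mean $A_h := \E T_h = \sum_{k=1}^N \varphi_h^k$: the geometric-series bound gives $|A_h| \le \min(N,\,2/|1-\varphi_h|) = O\bigl(\min(N,1/\|h\alpha\|)\bigr)$. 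For the centered part $M_h$, a direct second-moment computation (using $\E[e^{2\pi i h(S_k-S_j)\alpha}]=\varphi_h^{|k-j|}$) yields $\E|M_h(N)|^2 \ll N/\|h\alpha\|$; combining this with a fourth-moment (or Doob/maximal) inequality and a Borel--Cantelli argument---essentially the pathwise refinement of the LIL of \cite{BB} applied with $\alpha$ replaced by $h\alpha$, together with a union bound over $h\le H$---should produce the a.s.\ uniform estimate $|M_h(N)| \ll \sqrt{N\log\log N/\|h\alpha\|}$ for all $h\le H$.

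Plugging these estimates into Erd\H os--Tur\'an reduces the upper bound to handling
$$\sum_{h=1}^H \frac{\min(N,\|h\alpha\|^{-1})}{hN} \quad\text{and}\quad \sqrt{\frac{\log\log N}{N}}\sum_{h=1}^H \frac{1}{h\sqrt{\|h\alpha\|}}.$$
Under the strong type hypothesis $\|h\alpha\|\gtrsim h^{-\gamma}$, a continued-fraction/Ostrowski-type analysis isolating the contribution of the convergent denominators $q_n$ of $\alpha$ (and invoking the three-distance theorem on the intervals $[q_n,q_{n+1})$) should give $\sum_{h\le H}h^{-1}\|h\alpha\|^{-1/2}=O(\log H)$ in case $\gamma=2$, and the analogous polynomial bound $O(H^{(\gamma-2)/2})$ when $\gamma>2$, with matching estimates on the first sum. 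Taking $H=N$ in (i) then delivers $D_N = O\bigl(\sqrt{\log\log N/N}\,\log N\bigr)$, and taking $H\asymp(N/\log\log N)^{1/\gamma}$ in (ii) delivers $D_N = O\bigl((\log\log N/N)^{1/\gamma}\bigr)$.

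The lower bound in (i) is immediate from Proposition \ref{generallower}, since $\exp(2\pi i X_1\alpha)$ is nondegenerate whenever $\alpha$ is irrational. For the lower bound in (ii), pick a sequence of integers $q_n\to\infty$ with $\|q_n\alpha\|\asymp q_n^{-\gamma}$, write $\alpha = p_n/q_n + \delta_n$ with $|\delta_n|\asymp q_n^{-\gamma-1}$, and take $N\asymp q_n^\gamma$. Since $\{S_k\alpha\} = \{S_k p_n/q_n + S_k\delta_n\}$ and $|S_k\delta_n|\ll 1/q_n$ uniformly in $k\le N$, the orbit $\{S_k\alpha\}_{k\le N}$ lies within $O(1/q_n)$ of the arithmetic progression $\{j/q_n : 0\le j<q_n\}$, leaving a subinterval of length $\asymp 1/q_n$ essentially empty and forcing $D_N \gg 1/q_n \asymp N^{-1/\gamma}$. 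The hard part will be the sharp, uniform-in-$h$ almost sure bound on $|M_h(N)|$: a naive Chebyshev plus union bound loses a spurious $\sqrt{\log H}$, and reducing this to $\sqrt{\log\log N}$ likely requires either a Doob/Menshov-type maximal inequality adapted to the non-martingale structure of $T_h$, or a chaining argument refining the LIL of \cite{BB}. The companion Diophantine estimates on $\sum_{h\le H}h^{-1}\|h\alpha\|^{-s}$ for $s=1/2$ and $s=1$ under the relevant strong type hypothesis are also more subtle than the standard $s=1$ bound and will probably be packaged as independent lemmata.
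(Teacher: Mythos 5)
Your lower bounds are fine: (i) is indeed immediate from Proposition \ref{generallower}, and your construction for (ii) is essentially the paper's Lemma \ref{proplower2} with $\psi(k)=k$ (using $S_k\le 2k$), and it works with an appropriately small constant in the choice $N\asymp q_n^\gamma$. The Diophantine input you postulate, $\sum_{h\le H}h^{-1}\|h\alpha\|^{-1/2}=O(\log H)$ for $\gamma\le 2$ and $O(H^{\gamma/2-1})$ for $\gamma>2$, is also exactly what the paper proves (Proposition \ref{generaldioph} and Corollary \ref{gammadioph}), and your treatment of the mean term $A_h$ is harmless. The problem is the step you yourself flag as ``the hard part'': the almost sure bound $|M_h(N)|\ll\sqrt{N\log\log N/\|h\alpha\|}$ \emph{uniformly} in $h\le H$ with $H$ polynomial in $N$. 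This is not established by anything you cite, and the routes you suggest do not obviously close it: the LIL of \cite{BB} is a fixed-$h$ statement with no explicit dependence of the limsup constant on $\|h\alpha\|$ and no uniformity in $h$; a Chebyshev/fourth-moment union bound over $H\asymp N^{c}$ values of $h$ costs a factor $\sqrt{\log H}\asymp\sqrt{\log N}$, and pushing the moment order up to $\sim\log H$ to make the union bound summable reintroduces a $\sqrt{\log N}$ through the moment growth itself. With that loss your scheme yields $(\log N)^{3/2}$ in case (i) and extra logarithmic powers in case (ii), i.e.\ not the stated bounds, so the proposal as written does not prove the proposition.

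The paper's resolution is structurally different and is the missing idea: it never proves a pathwise uniform-in-$h$ estimate. Instead it takes the $L^{2p}$ norm (Minkowski) of the entire Erd\H{o}s--Tur\'an majorant of $\max_{2^\ell\le N\le 2^{\ell+1}}ND_N$, so the sum over $h$ is handled inside the norm and its only cost is the Diophantine sum; the required input is a $2p$-th moment bound for $\sum_{k\le N}e^{2\pi i hS_k\alpha}$ of the form $K\sqrt{Np}\,\|dh\alpha\|^{-1/2}$ with completely explicit dependence on $\|dh\alpha\|$, valid for $h$ up to the cutoff $H_N\asymp (N/p)^{1/\gamma}$. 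This is Proposition \ref{momentestimate}(ii), proved by the ordered-partition/combinatorial analysis and Lemma \ref{fmnslemma}, and it applies to your distribution because $\P(X_1=1)=\P(X_1=2)=1/2$ satisfies condition \eqref{secondcond} by Corollary \ref{corollary} (note that your factorization $|1-\varphi_h|\asymp\|h\alpha\|$ only controls $1-\varphi$ at a single point, whereas the moment machinery needs the two-variable bound $|\varphi(2\pi x)-\varphi(2\pi y)|\ge c\|d(x-y)\|$). Combining this with the Erd\H{o}s--Stechkin maximal inequality over dyadic blocks, choosing $p\asymp\log\log N$, and applying Markov plus Borel--Cantelli in $\ell$ gives the factor $\sqrt{\log\log N}$ without any union bound over $h$; the single $\log N$ in (i) then comes solely from the Diophantine sum. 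Unless you replace your uniform-in-$h$ pathwise claim by an argument of this type (or genuinely prove that claim, which appears harder than the theorem itself), the upper bounds remain unproved.
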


For an irrational $\alpha$ with strong type $\gamma$, the estimates in (i) hold if $1 \le \gamma \le 2$, while those in (ii) hold if $\gamma >2$. Thus the behavior of $D_N(\{S_k \alpha \})$ changes at the critical value $\gamma=2$. It would not be difficult to generalize (ii) to irrational $\alpha$ satisfying \eqref{strong} with an arbitrary $\psi (q)$ increasing faster than $q^2$. In this case the estimates for $D_N(\{ S_k \alpha \})$ would be given in terms of the inverse function $\psi^{-1}$.

The estimates in (i) apply to every algebraic irrational $\alpha$, as well as to almost every $\alpha$ in the sense of the Lebesgue measure. Indeed, a celebrated theorem of Roth \cite{RO} states that any algebraic irrational $\alpha$ satisfies $\left\| q \alpha \right\| \ge C q^{-(1+\e)}$ with some constant $C=C(\alpha, \e)>0$, where $\e>0$ is arbitrary. Furthermore, according to the Jarn\'ik--Besicovitch theorem \cite{BE}, the set of all $\alpha \in \R$ for which $\liminf_{q \to \infty} q^{\gamma} \left\| q \alpha \right\| <\infty$ has Hausdorff dimension $2/(\gamma+1)$. Thus except for a set of Hausdorff dimension 2/3 (and hence Lebesgue measure $0$), every $\alpha \in \R$ satisfies the Diophantine condition in (i).

Note that the exponent 1 of the log in the upper estimate in (i) is smaller than the exponent 3/2 in Baker's estimate (\ref{baker}), and thus random sequences give a better discrepancy bound.

\section{Results}\label{mainresults}

\subsection{Heavy-tailed distributions}\label{HT}

Suppose that the random variable $X_1$ has a \textit{heavy-tailed} distribution, i.e.\ $\mathbb{E} X_1^2 = \infty$. For the sake of simplicity, we only formulate a result for random variables whose tail distribution decays at the rate of a power function. The indicator function of the event $E$ will be denoted by $I_E$.

\begin{prop}\label{mainprop2} Let $X_1, X_2, \dots$ be integer-valued i.i.d.\ random variables such that $c_1 x^{-\beta} \le \P (|X_1| \ge x) \le c_2 x^{-\beta}$ for all $x>0$ with some constants $0<\beta<2$ and $c_1, c_2>0$. For $1 < \beta <2$ suppose also that $\mathbb{E}X_1=0$, and for $\beta =1$ that $|\E (X_1 I_{\{ |X_1| < x \}})| \le c_3$ for all $x>0$ with some constant $c_3>0$. Let $S_k=\sum_{j=1}^k X_j$, and let $\alpha \in \mathbb{R}$ be irrational.
\begin{enumerate}
\item[(i)] If $\left\| q \alpha \right\| \ge C q^{-2/\beta}$ for every $q \in \mathbb{N}$ with some constant $C>0$, then $D_N=D_N(\{ S_k \alpha \})$ satisfies
\[ D_N = O \left( \sqrt{\frac{\log \log N}{N}} \log N \right), \quad D_N = \Omega \left( \sqrt{\frac{\log \log N}{N}} \right) \quad \text{a.s.} \]

\item[(ii)] If $0< \liminf_{q \to \infty} q^{\gamma} \left\| q \alpha \right\| < \infty$ with some $\gamma >2/\beta$, then $D_N=D_N(\{ S_k \alpha \})$ satisfies
\[ D_N = O \left( \left( \frac{\log \log N}{N} \right)^{1/(\beta \gamma)} \right), \quad D_N = \Omega \left( \frac{1}{N^{1/(\beta \gamma)}} \right) \quad \text{a.s.} \]
\end{enumerate}
\end{prop}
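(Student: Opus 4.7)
The plan is to start from the Erdős–Turán–Koksma inequality
$$D_N \;\lesssim\; \frac{1}{H+1} + \sum_{h=1}^H \frac{1}{h}\,\frac{|T_N(h)|}{N}, \qquad T_N(h):=\sum_{k=1}^N e^{2\pi i h\alpha S_k},$$
and control the exponential sums $T_N(h)$ via moment estimates. Writing $\chi_h:=\varphi(2\pi h\alpha)$ where $\varphi$ is the characteristic function of $X_1$, independence of the increments gives $\E|T_N(h)|^2 = \sum_{j,k=1}^N \chi_h^{|k-j|}\le 2N/(1-|\chi_h|)$, and a combinatorial expansion of $|T_N(h)|^{2p}$ yields $\E|T_N(h)|^{2p}\lesssim_p (N/(1-|\chi_h|))^p$. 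The arithmetic input, a Tauberian estimate already used in \cite{BB2}, is
$$1-|\varphi(2\pi h\alpha)| \;\asymp\; \|h\alpha\|^{\beta}$$
for integer-valued $X_1$ with heavy tails of index $\beta$; the centering hypotheses for $\beta=1$ and $1<\beta<2$ ensure a genuine power law in this estimate. Together these reduce everything to estimating Diophantine sums $\sum_{h=1}^H h^{-1}\|h\alpha\|^{-\beta/2}$, and passage from moment bounds to a.s.\ statements is done by Borel–Cantelli along a geometric subsequence $N_m=\lfloor\theta^m\rfloor$ together with a short chaining argument to cover intermediate $N$.

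For the upper bound in (i), the hypothesis $\|h\alpha\|\ge Ch^{-2/\beta}$ gives $\|h\alpha\|^{-\beta/2}\lesssim h$, so the Erdős–Turán sum is $\lesssim H\sqrt{\log\log N/N}$ a.s.; balancing with the $1/H$ term at $H\asymp \log N$ produces the claimed $O(\sqrt{\log\log N/N}\,\log N)$. In case (ii), the pointwise bound $\|h\alpha\|\ge Ch^{-\gamma}$ used na\"ively is too crude, since small values of $\|h\alpha\|$ are concentrated only near the continued-fraction denominators $q_n$ of $\alpha$. Partitioning $\{1,\dots,H\}$ according to the three-distance theorem and summing blockwise over narrow windows around each convergent denominator yields the sharper Diophantine sum estimate $\sum_{h=1}^H h^{-1}\|h\alpha\|^{-\beta/2}\lesssim H^{\beta\gamma/2}$; optimizing $H\asymp (N/\log\log N)^{1/(\beta\gamma)}$ then gives $D_N=O((\log\log N/N)^{1/(\beta\gamma)})$.

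For the lower bounds, (i) is immediate from Proposition~\ref{generallower}, since $e^{2\pi i X_1\alpha}$ is nondegenerate for integer-valued $X_1$ and irrational $\alpha$. For (ii), pick $q_n\to\infty$ with $\|q_n\alpha\|\le Cq_n^{-\gamma}$, so $|1-\chi_{q_n}|\lesssim q_n^{-\beta\gamma}$, and set $N\asymp q_n^{\beta\gamma}$. Then every summand in $T_N(q_n)$ is within $o(1)$ of $1$ uniformly in $k\le N$, so $q_n\{S_k\alpha\}$ accumulates near the integers; equivalently, $\{S_k\alpha\}$ is confined to $q_n$ intervals of length $\ll 1/q_n$ around the points $j/q_n$, leaving empty gaps of length $\gtrsim 1/q_n$ and hence $D_N=\Omega(1/q_n)=\Omega(N^{-1/(\beta\gamma)})$ along this subsequence.

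The main obstacle is the upper bound in (ii): the continued-fraction bookkeeping that localizes small values of $\|h\alpha\|$ and yields the Diophantine sum bound $H^{\beta\gamma/2}$ is delicate, as is the passage from moment bounds on $T_N(h)$ to a.s.\ control uniformly in $h\le H$ that is sharp enough to avoid superfluous logarithmic losses. A secondary technical concern is handling the boundary cases $\beta=1$ and $1<\beta<2$ of the Tauberian estimate $1-|\varphi(2\pi h\alpha)|\asymp\|h\alpha\|^\beta$, where the proof genuinely relies on the prescribed centering hypotheses.
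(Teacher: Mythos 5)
Your overall architecture (Erd\H{o}s--Tur\'an plus moment bounds for $T_N(h)$ plus a Diophantine sum, lower bound in (i) from Proposition~\ref{generallower}, lower bound in (ii) from rational approximation) matches the paper, but the execution has genuine gaps. The most serious one is the upper bound in case (i): bounding each term by $\|h\alpha\|^{-\beta/2}\lesssim h$ gives only $D_N\lesssim 1/H+H\sqrt{\log\log N/N}$, whose optimum over $H$ is $(\log\log N/N)^{1/4}$, not $\sqrt{\log\log N/N}\,\log N$; worse, with your stated choice $H\asymp\log N$ the term $1/H\asymp 1/\log N$ alone already dwarfs the claimed bound, so the balance you describe cannot work. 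The missing ingredient is exactly the continued-fraction estimate you reserve for case (ii): in the regime $\gamma\le 2/\beta$ one has $\sum_{h\le H}h^{-1}\|h\alpha\|^{-\beta/2}=O(\log H)$ (Proposition~\ref{generaldioph}, Corollary~\ref{gammadioph}), and then $H$ must be taken of polynomial size $H\asymp(N/p)^{1/(\beta\gamma)}$ so that $N/H\lesssim\sqrt{Np}$; the $\log N$ in the answer comes from the Diophantine sum, not from $1/H$. In case (ii) your claimed sum bound $H^{\beta\gamma/2}$ is off by a factor of $H$ (it should be $H^{\beta\gamma/2-1}$), and with the exponent as you wrote it your optimization does not produce $(\log\log N/N)^{1/(\beta\gamma)}$. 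Finally, writing $\E|T_N(h)|^{2p}\lesssim_p(N/(1-|\chi_h|))^p$ with an unspecified $p$-dependent constant and then applying Borel--Cantelli along a geometric subsequence cannot yield the $\sqrt{\log\log N}$ factor: one needs the explicit constant (the paper's $(8p)^{2p}\max_r n^r/(r!(c\|dh\alpha\|^\beta)^{2p-r})$, valid only after checking the maximum sits at $r=p$ for $h\le H_N$), the choice $p\asymp\log\log N$ in the Erd\H{o}s--G\'al manner, and a maximal inequality (Erd\H{o}s--Stechkin/M\'oricz) over dyadic blocks.

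The lower bound in (ii) is also not proved as sketched. From $|1-\chi_{q_n}|\lesssim q_n^{-\beta\gamma}$ you only control the expectations $\E e^{2\pi i q_n\alpha S_k}=\chi_{q_n}^k$; this does not make ``every summand'' of $T_N(q_n)$ close to $1$, it only shows each $\|q_nS_k\alpha\|$ is small with probability $1-O(a)$, and a union bound over $k\le N$ is useless. To confine \emph{all} points $\{S_k\alpha\}$, $k\le N$, to $1/(3q_n)$-neighbourhoods of multiples of $1/q_n$ (which is what produces an empty interval and hence $D_N\gtrsim 1/q_n$), you must control $\max_{k\le N}|S_k|$: the paper derives $\P(\max_{k\le N}|S_k|\le C_1N^{1/\beta})\ge C_2>0$ from the bound $|1-\varphi(x)|\le c|x|^\beta$ --- this is where the tail upper bound and the centering hypotheses for $\beta=1$ and $1<\beta<2$ are actually needed --- together with L\'evy's inequality, and then upgrades ``infinitely often with positive probability'' to almost surely via the Hewitt--Savage zero--one law (Lemma~\ref{proplower3}). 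None of these steps appears in your sketch, and without them the a.s.\ statement along the subsequence $N\asymp q_n^{\beta\gamma}$ does not follow.
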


Here we have a dichotomy similar to that in Proposition \ref{mainprop1}, the critical value of $\gamma$ being $2/\beta$. Again, it would not be difficult to generalize (ii) to irrational $\alpha$ satisfying \eqref{strong} with an arbitrary $\psi (q)$ increasing faster than $q^{2/\beta}$. Similarly, we could derive estimates for random variables with tail distribution $c_1 \phi (x) \le \P (|X_1| \ge x) \le c_2 \phi (x)$, where $\phi (x)$ is not necessarily a power function. In this more general situation the critical order of magnitude of $\psi (q)$, where the behavior of $D_N$ changes, would not necessarily be a power function.

Note that the estimates in (i) apply to every algebraic irrational $\alpha$, as well as to almost every $\alpha$ in the sense of the Lebesgue measure.

Proposition \ref{mainprop2} applies e.g.\ to the positive integer-valued random variable $X_1$ with $\P (X_1=n)=c_{\beta}/n^{1+\beta}$, $n=1,2,\dots$, where $0< \beta < 1$. This way we obtain a random subsequence $S_k \alpha$ of $n \alpha$ increasing roughly at the polynomial speed $k^{1/\beta}$. More precisely, $S_k = O \left( k^{1/\beta + \e} \right)$ a.s.\ for any $\e>0$ but not for $\e=0$ (see e.g.\ \cite[Theorem 6.9]{P}).

In conclusion we note that Schatte's LIL under (\ref{SCH}) and Proposition 2.1 of our previous paper \cite{BB2} imply
that if in statement (i) of Proposition \ref{mainprop2} we replace the assumption $\| q \alpha \| \ge C q^{-2/\beta}$ by $ \| q \alpha\| \ge Cq^{-2/(5\beta) }$, then in the conclusion
$$ D_N = O \left( \sqrt{\frac{\log \log N}{N}} \log N \right) \quad \text{a.s.} $$
the factor $\log N$ can be dropped, resulting in a sharp LIL bound. Whether this is true under the original assumption remains open.

\subsection{The case $\E X_1^2<\infty$, $\E X_1 = 0$}

The previous result deals with the case $\E X_1^2 =\infty$, and covers the typical case when the tails of $X_1$ decrease with speed $x^{-\beta}$, $0<\beta<2$. Next, we assume $\E X_1^2<\infty$. As we will see, the results are substantially different according as $\E X_1$ equals 0 or not, and we start with the easier case $\E X_1=0$.

\begin{prop}\label{mainprop3} Let $X_1, X_2, \dots$ be nondegenerate integer-valued i.i.d.\ random variables such that $\E X_1=0$ and $\E X_1^2 < \infty$, let $S_k=\sum_{j=1}^k X_j$, and let $\alpha \in \R$ be irrational.
\begin{enumerate}
\item[(i)] If $\left\| q \alpha \right\| \ge C q^{-1}$ for every $q \in \mathbb{N}$ with some constant $C>0$, then $D_N=D_N(\{ S_k \alpha \})$ satisfies
\[ D_N = O \left( \sqrt{\frac{\log \log N}{N}} \log^2 N \right), \quad D_N = \Omega \left( \sqrt{\frac{\log \log N}{N}} \right) \quad \text{a.s.} \]

\item[(ii)] If $0< \liminf_{q \to \infty} q^{\gamma} \left\| q \alpha \right\| < \infty$ with some $\gamma >1$, then $D_N=D_N(\{ S_k \alpha \})$ satisfies
\[ D_N = O \left( \left( \frac{\log \log N}{N} \right)^{1/(2 \gamma)} \right), \quad D_N = \Omega \left( \frac{1}{N^{1/(2\gamma)}} \right) \quad \text{a.s.} \]
\end{enumerate}
\end{prop}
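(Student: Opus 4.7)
The plan is to follow the standard Erd\H os--Tur\'an paradigm, reducing the discrepancy bound to estimates on the exponential sums $T_N(h) = \sum_{k=1}^N e^{2 \pi i h S_k \alpha}$. Since $S_k$ is a random walk with i.i.d.\ increments, the characteristic function $\varphi$ of $X_1$ enters through $\E[e^{2 \pi i h S_k \alpha}] = \varphi(2\pi h\alpha)^k$. Under $\E X_1 = 0$, $\E X_1^2 < \infty$, and $X_1$ nondegenerate integer-valued, a Taylor expansion at the origin combined with the fact that $\varphi(2\pi t)$ has period $1$ in $t$ yields $|1 - \varphi(2\pi h\alpha)| \asymp \|h\alpha\|^2$.

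For the upper bound, I would start from
$$ D_N \ll \frac{1}{H} + \sum_{h=1}^H \frac{1}{h} \left| \frac{T_N(h)}{N} \right|. $$
A direct second-moment computation gives $\E|T_N(h)|^2 \ll N/|1-\varphi(2\pi h\alpha)| \asymp N/\|h\alpha\|^2$. To pass from this $L^2$ estimate to an almost sure bound uniform in $M \le N$ and $h \le H$, I would exploit the martingale differences $\xi_k = e^{2 \pi i h S_k \alpha} - \varphi(2\pi h\alpha) e^{2 \pi i h S_{k-1} \alpha}$: solving the recursion $u_k = \varphi(2\pi h\alpha) u_{k-1} + \xi_k$ decomposes $T_N(h)$ into a deterministic part of size $O(1/\|h\alpha\|^2)$ plus a martingale transform whose maximum is controlled by a Doob / Rademacher--Menshov inequality. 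Combining Borel--Cantelli along a geometric subsequence of $N$ with a union bound over $h \le H$ then yields
$$ |T_M(h)| \ll \frac{\sqrt{N \log \log N}\, (\log N)^{O(1)}}{\|h\alpha\|} \quad \text{a.s., uniformly in } M \le N,\, h \le H. $$
Inserting this into Erd\H os--Tur\'an leaves us with $D_N \ll 1/H + \sqrt{\log\log N/N}\, \sum_{h=1}^H 1/(h\|h\alpha\|)$. The Diophantine sum is then bounded by $O(\log^2 H)$ in case (i) (the classical dyadic bound for badly approximable $\alpha$ combined with partial summation) and by $O(H^{\gamma-1})$ in case (ii) (via the same dyadic method applied to the three-distance-theorem estimate $\sum_{h=1}^H 1/\|h\alpha\| \ll H^\gamma$ valid for strong type $\gamma$). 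Optimizing gives $H \sim N$ in (i) and $H \sim (N/\log\log N)^{1/(2\gamma)}$ in (ii), producing the stated upper bounds.

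The lower bound in (i) is immediate from Proposition \ref{generallower}, since $e^{2\pi i X_1 \alpha}$ is nondegenerate whenever $\alpha$ is irrational and $X_1$ is nondegenerate integer-valued. For (ii), I would choose a subsequence $q_n \to \infty$ with $\|q_n \alpha\| \le C/q_n^\gamma$ (available from $\liminf q^\gamma \|q\alpha\| < \infty$) and set $N_n$ of order $q_n^{2\gamma}$, so that $N_n|1-\varphi(2\pi q_n \alpha)|$ stays bounded. Writing $\rho_n = \varphi(2\pi q_n \alpha)$, the expectation
$$ \E\!\left[ \frac{T_{N_n}(q_n)}{N_n} \right] = \frac{\rho_n (1-\rho_n^{N_n})}{N_n(1-\rho_n)} $$
is then bounded away from $0$. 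Koksma's inequality gives $D_{N_n} \ge |T_{N_n}(q_n)/N_n|/(2\pi q_n)$, so a Paley--Zygmund / second-moment argument, combined either with a Kolmogorov zero-one law or with a concentration estimate that exploits the independence of sufficiently separated blocks of $X_j$, should transfer the positive mean to an almost-sure lower bound $|T_{N_n}(q_n)/N_n| \gg 1$ for infinitely many $n$, yielding $D_{N_n} \gg 1/q_n \asymp N_n^{-1/(2\gamma)}$ a.s.

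The hardest step is the a.s.\ uniform-in-$h$ maximal inequality for the exponential sums in the upper bound: the $\log^2 N$ factor in case (i) is precisely the price paid for the dyadic discretization of $N$ together with the union bound over $h \le H$, and tightening it further appears genuinely delicate. A secondary obstacle is the a.s.\ lower bound in (ii), because the events $\{|T_{N_n}(q_n)/N_n| \ge c\}$ are highly correlated across $n$, so a direct Borel--Cantelli is unavailable and a variance estimate along the subsequence together with an appropriate $0$--$1$ law will be required.
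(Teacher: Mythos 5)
The decisive gap is in your passage from moment estimates to the almost sure bound in the upper estimates. With only second moments (even after a Doob or Rademacher--Menshov maximal inequality, which itself costs extra $\log$ factors) the tail bounds for $\max_{M\le N}|T_M(h)|$ decay only polynomially in the threshold, so a union bound over $h\le H$ with $H\sim N$ (case (i)) or $H\sim (N/\log\log N)^{1/(2\gamma)}$ (case (ii)) and over dyadic blocks of $N$ produces failure probabilities whose sum diverges unless you inflate the threshold by positive powers of $\log N$. That inflation is fatal: in case (ii) the claimed bound $O((\log\log N/N)^{1/(2\gamma)})$ carries no $\log N$ factor at all, and in case (i) you would end up above $\log^2 N$. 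In other words, the scheme ``$L^2$ bound + martingale transform + union bound over $h$ + Borel--Cantelli'' cannot produce the $\log\log$-sharp exponents you assert; your hedge $(\log N)^{O(1)}$ in the uniform bound is exactly the loss that the statement does not permit. The paper's proof avoids this in two ways simultaneously: it estimates the $2p$-th moments of the exponential sums with $p\sim\log\log N$ (Proposition \ref{momentestimate}(i) with $\beta=2$, proved by an Erd\H{o}s--G\'al-type ordered-partition combinatorial analysis, then Proposition \ref{examplesprop}(i) to verify \eqref{firstcond}), upgrades them to maxima over $N$ by the Erd\H{o}s--Stechkin (M\'oricz) inequality, and then applies Minkowski's inequality in $L^{2p}$ to the entire Erd\H{o}s--Tur\'an majorant, summing over $h$ \emph{inside} the norm with Corollary \ref{gammadioph}; a single Markov bound per dyadic block then gives probability $4^{-2p}\le \ell^{-2}$ at the cost of only a constant factor, which is precisely where the bare $(\log\log N)^{1/2}$ and $(\log\log N)^{1/(2\gamma)}$ come from. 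Some such growing-moment (or genuinely sub-Gaussian with the correct variance proxy $N/\|dh\alpha\|^2$, not $N/\|dh\alpha\|^4$) input, used without a union bound over $h$, is indispensable; your proposal does not supply it.

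Two further points. First, the claim $|1-\varphi(2\pi h\alpha)|\asymp\|h\alpha\|^2$ is false in general: for $X_1=\pm 2$ with probability $1/2$ each, $\varphi(2\pi t)=\cos 4\pi t$ equals $1$ at $t=1/2$, so the lower bound fails when $h\alpha$ is near a half-integer. One must work with $\|dh\alpha\|^2$, where $d$ is the g.c.d.\ of the support of $X_1-X_2$ (this is exactly Proposition \ref{examplesprop}(i)); this is harmless since $d\alpha$ inherits the Diophantine hypothesis up to constants, but it must be carried through the Diophantine sums. Second, your lower bound in (ii) takes a different and more roundabout route than the paper (Koksma plus Paley--Zygmund along $N_n\asymp q_n^{2\gamma}$); it can probably be completed, but not with Kolmogorov's zero-one law, since the events $\{|T_{N_n}(q_n)|/N_n\ge c\}$ are not tail events --- the paper uses the Hewitt--Savage zero-one law together with a reverse Fatou argument. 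The paper's own argument (Lemma \ref{proplower3} with $\beta=2$) is simpler and avoids exponential sums altogether: by L\'evy's inequality, with probability bounded below $\max_{k\le N}|S_k|\le C_1\sqrt{N}$ for $N\asymp q^{2\gamma}$, and on this event every $S_k\alpha$ lies within $1/(3q)$ of a multiple of $1/q$, so an interval of length $1/(3q)$ contains no points of the sequence and $D_N\ge 1/(3q)\gg N^{-1/(2\gamma)}$ directly from the definition of discrepancy. Your part (i) lower bound via Proposition \ref{generallower} agrees with the paper.
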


The dichotomy is less pronounced here than in the previous propositions. Formally, the critical value is now $\gamma =1$. Thus (i) only applies to badly approximable irrationals, but not to almost every $\alpha$.

Note that the factor $\log ^2 N$ in the upper estimate in (i) is greater than the factor $(\log N)^{3/2+\varepsilon}$ in Baker's bound (\ref{baker}). However, Baker's bound does not apply to $\{S_k \alpha \}$, since $\mathbb{E}X_1=0$ implies that $S_k$ cannot be an increasing sequence. Additionally, the set of all badly approximable $\alpha$ is of measure 0, and Baker's estimate provides no information on what happens in such sets. As more than one result in our paper shows, discrepancy estimates in sets of zero measure can be much worse than the ``typical'' behavior.

\subsection{The case $\E X_1^2<\infty$, $\E X_1 \ne 0$}

The relation $\E X_1\neq 0$ holds in particular if $X_1>0$, when the sequence $S_k$ is increasing with probability 1, a natural situation since in this case $\{S_k \alpha\}$ is a random subsequence of $\{n \alpha\}$. As we will see, this case is considerably more involved, and we can prove almost tight estimates for the discrepancy only for certain special distributions, such as in Proposition \ref{mainprop1}.

In Section \ref{Examples} we will see further examples for which Proposition \ref{mainprop1} holds. For example, this is the case if $\P (X_1=a)=\P (X_1=b)=1/2$ for some $a,b \in \mathbb{Z}$, $a \not\equiv b \pmod{2}$, and also if $\E |X_1|<2 \P (X_1=1)$. However, we do not have a complete characterization of distributions for which the estimates in Proposition \ref{mainprop1} are valid. In the (admittedly most interesting) case  $\E X_1^2 < \infty$, $\E X_1 \neq 0$, for an irrational $\alpha$ of strong type $\gamma >1$ in general we only know that $D_N (\{ S_k \alpha \})$ is, up to logarithmic factors, at most $N^{-1/(\gamma+1)}$ because of \eqref{Weberbound}, and at least $N^{-\tau}$ with $\tau=\min \{1/2, 1/\gamma \}$ because of Proposition \ref{generallower} and Lemma \ref{proplower2} below. Thus there is a gap between the exponents of $N$ in the upper and lower estimates, and the precise exponent remains open.

\subsection{Main theorem}

As we have seen, the order of magnitude of the discrepancy $D_N(\{S_k \alpha\})$ is sensitive to the distribution of $X_1$ and the Diophantine properties of $\alpha$. Theorem \ref{theoremA} below, which is the main result of our paper, provides criteria in terms of the characteristic function $\varphi$ of $X_1$. As we will see, these criteria cover all the above-mentioned classes and actually more.

\begin{thm}\label{theoremA} Let $X_1, X_2, \ldots$ be i.i.d.\ random variables with characteristic function $\varphi$, and let $S_k=\sum_{j=1}^k X_j$. Let $\alpha \in \R$ be irrational such that $\left\| q \alpha \right\| \ge C q^{-\gamma}$ for every $q \in \mathbb{N}$ with some constants $\gamma \ge 1$ and $C>0$.

\begin{enumerate}
\item[(i)] Suppose there exist real numbers $0<\beta \le 2$, $c>0$ and an integer $d>0$ such that for any $x\in \R$,
\begin{equation}\label{firstcond}
1-|\varphi(2\pi x)| \ge c \| dx\|^\beta .
\end{equation}
Then, with $s=1$ if \, $0<\beta<2$, and $s=2$ if \, $\beta=2$,
\begin{equation}\label{main}
D_N (\{S_k\alpha\})= \left\{ \begin{array}{ll} O \left( \sqrt{\frac{\log \log N}{N}} \log^s N \right) \quad \text{a.s.} & \text{if } 1\le \gamma\le 2/\beta ,\\ O \left( \left( \frac{\log \log N}{N} \right)^{1/(\beta \gamma)} \right) \quad \text{a.s.} & \text{if } \gamma >2/\beta . \end{array} \right.
\end{equation}

\item[(ii)] Suppose there exist a real number $c>0$ and an integer $d>0$ such that for any $x,y \in \R$,
\begin{equation}\label{secondcond}
|\varphi(2\pi x) - \varphi(2\pi y)| \ge c \| d(x-y)\|.
\end{equation}
Then
\begin{equation*}
D_N (\{S_k\alpha\})= \left\{ \begin{array}{ll} O \left( \sqrt{\frac{\log \log N}{N}} \log N \right) \quad \text{a.s.} & \text{if } 1 \le \gamma \le 2,\\ O \left( \left( \frac{\log \log N}{N} \right)^{1/\gamma} \right) \quad \text{a.s.} & \text{if } \gamma >2. \end{array} \right.
\end{equation*}
\end{enumerate}
\end{thm}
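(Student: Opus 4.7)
The overall strategy is to combine the Erd\H{o}s--Tur\'an inequality
\[ D_N(\{S_k\alpha\}) \le \frac{C}{H} + C \sum_{h=1}^H \frac{1}{h}\left|\frac{1}{N}\sum_{k=1}^N e^{2\pi i h S_k \alpha}\right|\]
with moment estimates on the exponential sums $T_h(N) := \sum_{k=1}^N e^{2\pi i h S_k \alpha}$, extracted from the characteristic function of the random walk. Independence of the increments gives $\mathbb{E}|T_h(N)|^2 = N + 2\,\mathrm{Re}\sum_{j=1}^{N-1}(N-j)\varphi(2\pi h\alpha)^j$, bounded by $CN/(1-|\varphi(2\pi h\alpha)|)$, while $|\mathbb{E} T_h(N)| \le 2/|1-\varphi(2\pi h\alpha)|$. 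To upgrade moment bounds to almost sure estimates, I would apply Markov's inequality along the dyadic subsequence $N_j=2^j$, union bound over $h \le N_j$, and Borel--Cantelli, using Doob's maximal inequality to pass from $N_j$ to the whole block $[N_j, N_{j+1}]$.

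For part (i), condition (\ref{firstcond}) together with $\|q\alpha\| \ge Cq^{-\gamma}$ yields $1-|\varphi(2\pi h\alpha)| \ge c'h^{-\beta\gamma}$, which feeds directly into the second moment. For part (ii), condition (\ref{secondcond}) only gives $|1 - \varphi(2\pi h\alpha)| \ge c'h^{-\gamma}$ (not a lower bound on $1-|\varphi|$), so a direct second moment is insufficient. Instead I would introduce the martingale
\[ M_h(N) := \sum_{j=1}^N e^{2\pi i h S_{j-1}\alpha}\bigl(e^{2\pi i h X_j\alpha} - \varphi(2\pi h\alpha)\bigr), \]
which has increments bounded by $2$ and satisfies $\mathbb{E}|M_h(N)|^2 = N(1-|\varphi(2\pi h\alpha)|^2)$. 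The telescoping identity $(1-\varphi(2\pi h\alpha))T_h(n) = M_h(n) + \varphi(2\pi h\alpha) - e^{2\pi i h S_n\alpha}$ yields $|T_h(n)| \le (|M_h(n)|+2)/|1-\varphi(2\pi h\alpha)|$, so Doob's and Azuma-type concentration for $M_h$ reduce part (ii) to essentially the same calculation as part (i) with $\beta=1$.

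The final step is to optimize the cutoff $H$. In the small-$\gamma$ regime ($\gamma \le 2/\beta$ in (i), $\gamma \le 2$ in (ii)), fluctuations dominate; choosing $H$ so that $1/H$ matches the LIL scale $\sqrt{\log\log N/N}$, the factor $\sum_h 1/h$ produces the $\log^s N$ contribution, with the extra $\log N$ at $\beta=2$ arising from the logarithmic loss when the characteristic function admits only a quadratic lower bound. In the large-$\gamma$ regime, $H$ is chosen polynomially, and the exponent $1/(\beta\gamma)$ emerges from balancing $1/H$ against the mean contribution $\sum_h |\mathbb{E} T_h(N)|/(hN) \asymp \sum_h h^{\beta\gamma-1}/N$. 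The main technical obstacle is precisely this last optimization: naive second-moment Erd\H{o}s--Tur\'an produces only $1/(\beta\gamma+2)$, and separating $T_h$ into its mean and centered parts (as behind Weber's bound (\ref{Weberbound})) yields only $1/(\beta\gamma+1)$, so reaching the sharp $1/(\beta\gamma)$ matching the lower bound of Proposition~\ref{mainprop2} requires carefully distinguishing the regimes $h \le N^{1/(\beta\gamma)}$ and $h > N^{1/(\beta\gamma)}$, on either side of which $(1-|\varphi(2\pi h\alpha)|)N$ crosses unity and the relative sizes of mean and standard deviation of $T_h(N)$ swap dominance.
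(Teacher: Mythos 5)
Your outline reproduces the easy part of the argument (Erd\H{o}s--Tur\'an, dyadic blocks, Borel--Cantelli) but omits the two ingredients that actually make Theorem \ref{theoremA} true at the stated strength, and the step you yourself flag as ``the main technical obstacle'' is left unresolved. First, the moment input. With only $\E |T_h(N)|^2 \lesssim N/(1-|\varphi(2\pi h\alpha)|)$ and $|\E T_h(N)|\lesssim |1-\varphi(2\pi h\alpha)|^{-1}$, Chebyshev/Markov plus a union bound over $h\le H\sim N^{1/(\beta\gamma)}$ and over dyadic blocks forces deviation thresholds carrying powers of $\log N$, so at best you obtain bounds of the shape $O\left((\log N/N)^{1/(\beta\gamma)}\right)$, strictly weaker than \eqref{main}; the $\log\log N$ factors in \eqref{main} come from the Erd\H{o}s--G\'al device of bounding the $2p$-th moments with explicit dependence on $p$ and choosing $p\sim\log\log N$ (Proposition \ref{momentestimate} and Lemma \ref{lem:1}), where the sum over $h$ is handled by the $L^{2p}$ triangle inequality and a \emph{single} Markov application, not by a union bound, and the maximum over a dyadic block is handled by the M\'oricz (Erd\H{o}s--Stechkin) inequality --- Doob does not apply to $T_h(N)$, which is not a martingale. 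Your martingale $M_h$ for part (ii) is a genuinely good idea: since $|1-\varphi|\ge 1-|\varphi|$ and $\sqrt{1-|\varphi|^2}/|1-\varphi|\le \sqrt{2}/\sqrt{1-|\varphi|}$, a Freedman/Rosenthal-type bound using the conditional variance $N(1-|\varphi|^2)$ would recover the crucial square-root saving $\|T_h(N)\|_{2p}\lesssim \sqrt{Np}\,\|dh\alpha\|^{-\beta/2}$ (in fact for both parts, potentially bypassing the combinatorics of Proposition \ref{momentestimate}); but plain Azuma with increments bounded by $2$ ignores that variance and only gives $\sqrt{N}/\|dh\alpha\|$, which does \emph{not} reduce (ii) to ``(i) with $\beta=1$'', and even with Freedman your union bound over $h$ still costs $\log N$ where the theorem demands $\log\log N$.

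Second, the Diophantine input. You only use the pointwise bound $\|h\alpha\|\ge Ch^{-\gamma}$, i.e.\ sums like $\sum_{h\le H}h^{\beta\gamma-1}\asymp H^{\beta\gamma}$, and your stated balancing of $1/H$ against $\sum_{h\le H}|\E T_h(N)|/(hN)\asymp H^{\beta\gamma}/N$ gives $H\sim N^{1/(\beta\gamma+1)}$, i.e.\ exactly the Weber exponent \eqref{Weberbound} that you declare insufficient --- so the claim that $1/(\beta\gamma)$ ``emerges'' from this balance is arithmetically wrong. The sharp exponent requires the continued-fraction estimate of Proposition \ref{generaldioph}/Corollary \ref{gammadioph}, namely $\sum_{h\le H}1/(h\|h\alpha\|^{\beta/2})=O(H^{\beta\gamma/2-1})$ for $\gamma>2/\beta$ (and $O(\log^s H)$ for $\gamma\le 2/\beta$), a saving of a full factor $H$ over the termwise bound: with the termwise bound and the admissible cutoff $H\sim (N/p)^{1/(\beta\gamma)}$ the fluctuation term in Erd\H{o}s--Tur\'an is of trivial size $N$. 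Without this lemma (or an equivalent), no amount of splitting at $h\approx N^{1/(\beta\gamma)}$ will produce the exponent $1/(\beta\gamma)$, let alone with the $\log\log N$ factor; as written, the proposal proves a weaker statement than Theorem \ref{theoremA}.
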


\noindent Conditions \eqref{firstcond} and \eqref{secondcond} are not standard in probability theory, therefore we offer some insight into their behavior in Section \ref{Examples}. As we will see in Proposition \ref{examplesprop} (i), Theorem \ref{theoremA} (i) with $\beta=2$ applies to any nondegenerate integer-valued $X_1$, making it our most general upper estimate.

Although we did not assume in Theorem \ref{theoremA} that $X_1$ is integer-valued, and indeed there exist non-integer-valued distributions satisfying \eqref{firstcond} or \eqref{secondcond}, the estimates, while valid, might be far from optimal in the non-integral case. Note that the upper bounds in Proposition \ref{mainprop1} will follow from Theorem \ref{theoremA} (ii); the upper bounds in Proposition \ref{mainprop2} will be a corollary of Theorem \ref{theoremA} (i) with $0<\beta<2$; finally, the upper bounds in Proposition \ref{mainprop3} will be deduced from Theorem \ref{theoremA} (i) with $\beta=2$. The lower bounds in Propositions \ref{mainprop1}, \ref{mainprop2} and \ref{mainprop3} are either a special case of Proposition \ref{generallower}, or follow from a simple argument based on the growth rate of $S_k$ (see Lemmas \ref{proplower2} and \ref{proplower3} below).

Our proof of Theorem \ref{theoremA} is based on the Erd\H{o}s--Tur\'an inequality, which states that for any sequence $(x_k)$ of reals and any $H \in \mathbb{N}$
\begin{equation}\label{ErdosTuran}
D_N(x_k) \le C \left( \frac{1}{H} + \sum_{h=1}^H \frac{1}{h} \left| \frac{1}{N} \sum_{k=1}^N e^{2 \pi i h x_k} \right| \right)
\end{equation}
with a universal constant $C>0$. The free parameter $H$ can be chosen arbitrarily to optimize the estimate. Note that the same exponential sum shows up in Weyl's criterion. To estimate $D_N (\{ S_k \alpha \})$, we therefore need to study
\begin{equation}\label{expsum}
\sum_{k=1}^N e^{2 \pi i S_k h \alpha} ,
\end{equation}
and this is why it was natural to state the conditions of Theorem \ref{theoremA} in terms of the characteristic function $\varphi$ of $X_1$. The same approach was followed in Weber \cite{W} and Berkes and Weber \cite{BW}, which were the starting point for our investigations. The various arithmetic and metric upper bounds for $D_N(\{S_k \alpha\})$ in \cite{W} and \cite{BW} were based on estimates for the second and fourth moments of \eqref{expsum}. The improvements in the present paper depend on sharp asymptotic estimates for the $2p$th moments of \eqref{expsum} for $p =O(\log\log N)$, a technique going back to Erd\H{o}s and G\'al \cite{EG} and which, as we will see, presents considerable combinatorial difficulties. A crucial ingredient of the argument will be a sharp estimate for Diophantine sums
\[ \sum_{h=1}^H \frac{1}{h \|h \alpha\|^b} \quad (0 < b \le 1) \]
(see Proposition \ref{generaldioph} and Corollary \ref{gammadioph}), which is of independent interest.

\section{The moments of an exponential sum}

Let $X_1, X_2, \dots$ be i.i.d.\ random variables, $S_k = \sum_{j=1}^k X_j$ and $\alpha \in \R$. In this section we estimate the moments
\begin{equation}\label{2pmoment}
\E \left| \sum_{k=m+1}^{m+n} e^{2 \pi i S_k \alpha} \right|^{2p}
\end{equation}
where $p \ge 1$ is an integer. The order of magnitude of \eqref{2pmoment} depends on a delicate interplay between the distribution of the random variable $X_1$ and the value of $\alpha$. Our main focus is on the case when $X_1$ is integer-valued, and $\alpha$ is irrational.

To get a basic understanding of \eqref{2pmoment}, consider the simplest case $p=1$. Expanding the square we get
\[ \E \left| \sum_{k=m+1}^{m+n} e^{2 \pi i S_k \alpha} \right|^2 = \sum_{k_1, k_2=m+1}^{m+n} \E e^{2 \pi i (S_{k_1} - S_{k_2}) \alpha} . \]
We need to decompose this sum into three parts, according to the cases $k_1=k_2$, $k_1<k_2$ and $k_1>k_2$. The terms with $k_1=k_2$ are simply 1. In the other two cases, using the independence of $X_1, X_2, \dots$ we have
\begin{equation}\label{k1k2cases}
\E e^{2 \pi i (S_{k_1} - S_{k_2}) \alpha} = \left\{ \begin{array}{ll} \varphi (-2 \pi \alpha)^{k_2-k_1} & \textrm{if } k_1<k_2, \\ \varphi (2 \pi \alpha)^{k_1-k_2} & \textrm{if } k_1>k_2. \end{array} \right.
\end{equation}
It is now easy to sum over all pairs $m+1 \le k_1,k_2 \le m+n$ and obtain an explicit formula for \eqref{2pmoment} in the case $p=1$.

The basic tool for the case $p>1$ is a generalization of the decomposition above which enables an evaluation similar to \eqref{k1k2cases} of the terms in the expanded sum. The number of cases will obviously be much larger than 3, in fact it will be almost as large as $(2p)^{2p}$.

We are ultimately interested in the discrepancy of the sequence $\{ S_k \alpha \}$. To use \eqref{ErdosTuran} with $x_k=S_k \alpha$ for a specific $\alpha$, we therefore need to estimate \eqref{2pmoment} not only for $\alpha$, but for every integral multiple of $\alpha$ as well.
The main difficulty of this section is thus that our estimate of \eqref{2pmoment} cannot contain any implied constant depending on $\alpha$, it has to be completely explicit.

\subsection{Two estimates of the moments}

We now prove two estimates of \eqref{2pmoment} under two different conditions on the distribution of $X_1$. In the proofs we will often use the fact that $\left\| \cdot \right\|$ is symmetric and subadditive, i.e.\ $\left\| -x \right\| = \left\| x \right\|$ and $\left\| x+y \right\| \le \left\| x \right\| + \left\| y \right\|$ for any $x,y \in \R$, and that the characteristic function $\varphi$ of any probability distribution satisfies $\varphi (-x)=\bar{\varphi} (x)$ and $|\varphi (x)| \le 1$ for any $x \in \R$.

\begin{prop}\label{momentestimate} Let $X_1, X_2, \dots$ be i.i.d.\ random variables with characteristic function $\varphi$, and let $S_k=\sum_{j=1}^k X_j$.
\begin{enumerate}
\item[(i)] Suppose that there exist real constants $0<\beta \le 2$ and $c,d>0$ such that \eqref{firstcond} holds for any $x \in \R$. For any $\alpha \in \R$ such that $d \alpha \not\in \mathbb{Z}$, and any integers $m \ge 0$ and $n,p \ge 1$,
\begin{equation}\label{mom(i)}
 \E \left| \sum_{k=m+1}^{m+n} e^{2 \pi i S_k \alpha} \right|^{2p} \le (8p)^{2p} \max_{1 \le r \le p} \frac{n^r}{r! \left( c \left\| d \alpha \right\|^{\beta} \right)^{2p-r}} .
\end{equation}
\item[(ii)] Suppose that there exist real constants $c,d>0$ such that \eqref{secondcond} holds for any $x,y \in \R$. For any $\alpha \in \R$ such that $d \alpha \not\in \mathbb{Z}$, and any integers $m \ge 0$ and $n,p \ge 1$,
\begin{equation}\label{mom(ii)}
 \E \left| \sum_{k=m+1}^{m+n} e^{2 \pi i S_k \alpha} \right|^{2p} \le (4p)^{2p} \sum_{r=0}^p \frac{n^r}{r! \left( c \left\| d \alpha \right\| \right)^{2p-r}} .
\end{equation}
\end{enumerate}
\end{prop}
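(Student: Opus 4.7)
The plan is a direct expansion of the $2p$-th moment followed by a combinatorial analysis of the resulting multi-geometric sum.

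First I would expand
$$\E\left|\sum_{k=m+1}^{m+n}e^{2\pi iS_k\alpha}\right|^{2p}=\sum\E\,e^{2\pi i\alpha U},\qquad U=\sum_{i=1}^pS_{k_i}-\sum_{j=1}^pS_{l_j},$$
the outer sum ranging over all tuples $(k_1,\dots,k_p,l_1,\dots,l_p)\in[m+1,m+n]^{2p}$. Writing $S_k=\sum_{j\le k}X_j$ and rearranging, $U=\sum_j c_j X_j$ with $c_j=|\{i:k_i\ge j\}|-|\{i:l_i\ge j\}|$ an integer step function, so by independence of the $X_j$'s
$$\E\,e^{2\pi i\alpha U}=\prod_j\varphi(2\pi c_j\alpha).$$
If $u_1<\dots<u_r$ are the distinct values taken by the $2p$-tuple (with $r\le 2p$) and $(a_s,b_s)$ are the multiplicities of $k$'s and $l$'s at $u_s$, then $c_j$ equals the constant $M_i:=\sum_{s\ge i}(a_s-b_s)\in\mathbb Z$ on each interval $(u_{i-1},u_i]$, and $M_1=0$ because $\sum_s(a_s-b_s)=0$. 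Hence the inner expectation reduces to $\prod_{i=2}^r\varphi(2\pi M_i\alpha)^{u_i-u_{i-1}}$.

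Next I would reorganise the outer sum by $r$, by the positions $(u_i)$, by the multiplicity profile $(a_s,b_s)$ with $\sum_s a_s=\sum_s b_s=p$, and by the label assignment (contributing the multinomial $(p!)^2/\prod_s(a_s!\,b_s!)$). Using the gap variables $g_i=u_i-u_{i-1}\ge 1$, with the free variable $g_1=u_1-m$ yielding a factor $\le n$, each geometric factor is bounded by the closed-form estimate $|\sum_{g=1}^G\varphi(2\pi M_i\alpha)^g|\le\min(G,\,2/|1-\varphi(2\pi M_i\alpha)|)$ with $G\le n$. Condition (i) gives the lower bound $|1-\varphi(2\pi x)|\ge 1-|\varphi(2\pi x)|\ge c\|dx\|^\beta$, while condition (ii) with $y=0$ gives the stronger $|1-\varphi(2\pi x)|\ge c\|dx\|$.

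The main obstacle is that these pointwise estimates a priori involve $\|dM_i\alpha\|$ rather than $\|d\alpha\|$, and $\|dM_i\alpha\|$ can be arbitrarily small for a generic $\alpha$. My plan is to handle this via the elementary inequality $\min(n,\,1/(c\|dM_i\alpha\|^\beta))\le\max(n,\,1/(c\|d\alpha\|^\beta))$, valid in both sub-cases $\|dM_i\alpha\|\ge\|d\alpha\|$ (the geometric bound wins and is $\le 1/(c\|d\alpha\|^\beta)$) and $\|dM_i\alpha\|<\|d\alpha\|$ (the trivial $n$-bound wins); for factors with $|M_i|=1$ one has $\|dM_i\alpha\|=\|d\alpha\|$ and the geometric bound $1/(c\|d\alpha\|^\beta)$ applies directly. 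Combined with a careful combinatorial accounting that relates the distribution of the coefficients $M_i$ to the multiplicity profile, each configuration then contributes a quantity of the form $n^{r'}/(c\|d\alpha\|^\beta)^{2p-r'}$ with $r'\in\{1,\dots,p\}$, the range controlled by the constraints $\sum_s a_s=\sum_s b_s=p$ and $r\le 2p$. Absorbing the multinomial, the sign-pattern counts, the binomial $\binom{n}{r}\le n^r/r!$, and the maximum over $r$ into the universal constant $(8p)^{2p}$ yields the bound in (i). Part (ii) follows the same route but is cleaner: the direct lower bound on $|1-\varphi|$ avoids the lossy step $|1-\varphi|\ge 1-|\varphi|$, improves the constant to $(4p)^{2p}$, and admits an explicit sum $\sum_{r=0}^p$ including the $r=0$ term $1/(c\|d\alpha\|)^{2p}$, which dominates in the low-$n$ regime.
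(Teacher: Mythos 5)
Your expansion and reduction to the multi‑geometric sum over the distinct values $u_1<\cdots<u_r$ with step coefficients $M_i$ is exactly the paper's decomposition (your $M_i$ are its $c_j$, your multiplicity profile its ordered partition). The gap is that the crux of the proof --- showing that at most $p$ of the coefficients can be ``bad'', i.e.\ have $\|dM_i\alpha\|$ small --- is precisely the step you defer to ``a careful combinatorial accounting'', and the constraints you cite ($\sum_s a_s=\sum_s b_s=p$, $r\le 2p$) do not give it. What is actually needed, and what the paper proves, is twofold: (a) the set $B=\{k\in\mathbb Z:\ \|dk\alpha\|<\tfrac12\|d\alpha\|\}$ contains no two consecutive integers (note that your cutoff $\|dM_i\alpha\|<\|d\alpha\|$, without the factor $\tfrac12$, does not have this property: for $d=1$ and $\alpha$ near $2/5$ one has $\|2\alpha\|,\|3\alpha\|<\|\alpha\|$); and (b) a parity argument: if $M_{j_a}$ and $M_{j_{a+1}}$ are consecutive bad coefficients, their difference is the signed count $\sum_i(-1)^{i+1}$ over the intervening multiplicity blocks, hence cannot be $\pm1$, so those blocks contain at least two of the $2p$ indices; summing over $a$ gives at most $p$ bad coefficients. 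Without this, your per‑factor bound $\max\bigl(n,\,1/(c\|d\alpha\|^{\beta})\bigr)$ permits contributions like $n^{2p-1}\bigl(c\|d\alpha\|^{\beta}\bigr)^{-1}$, far exceeding the claimed $\max_{1\le r\le p} n^r/\bigl(r!\,(c\|d\alpha\|^{\beta})^{2p-r}\bigr)$; keeping the exponent of $n$ at most $p$ is exactly what makes the proposition useful later.

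Part (ii) is not ``the same route but cleaner''; it needs a genuinely different mechanism. Condition \eqref{secondcond} gives no lower bound on $1-|\varphi|$ (it holds, for instance, with $|\varphi|\equiv 1$), so you cannot take absolute values term by term, and the simplex‑coupled sum $\sum_{u_1<\cdots<u_r}\prod_i\varphi(2\pi M_i\alpha)^{u_i-u_{i-1}}$ does not factor into one‑variable geometric sums: bounding each factor by $\min\bigl(G,\,2/|1-\varphi(2\pi M_i\alpha)|\bigr)$ after applying the triangle inequality to the outer sum is not legitimate, since the cancellation you invoke lives inside a sum whose variables are coupled by $\sum_i(u_i-u_{i-1})\le n$. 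Moreover, summing out a variable $u_i$ produces denominators $|1-\varphi(2\pi M_i\alpha)/\varphi(2\pi M_{i+1}\alpha)|$, so the Diophantine input enters through differences $\|d(M_i-M_{i+1})\alpha\|$ (more generally through products of the ratio variables over intervals), not through $\|dM_i\alpha\|$ alone via $y=0$. Handling this coupled sum is the content of the paper's Lemma \ref{fmnslemma} (an induction on the number of variables), combined again with the parity count of intervals $I$ with $\sum_{i\in\bigcup_{j\in I}P_j}(-1)^{i+1}\ne\pm1$ to keep the exponent of $n$ at most $p$. Your sketch of (i) could be repaired along the paper's lines (work with $1-|\varphi|$ and nonnegative term‑by‑term bounds, where the coupling is harmless), but for (ii) the missing lemma and the interval/difference structure are essential, not an optimization of constants.
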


\begin{proof} Let us expand the power to obtain
\begin{equation}\label{2pexpand}
\E \left| \sum_{k=m+1}^{m+n} e^{2 \pi i S_k \alpha} \right|^{2p} = \sum_{k_1, \dots , k_{2p}=m+1}^{m+n} \E e^{2 \pi i (S_{k_1} - S_{k_2} + \cdots + S_{k_{2p-1}} - S_{k_{2p}}) \alpha} .
\end{equation}
In order to compute the expected value, we need to write the exponent as a sum of independent random variables. To this end, let us say that $P=(P_1, \dots, P_s)$ is an \textit{ordered partition} of the set $[2p]$, where $[N]$ denotes the set $\left\{ 1, \dots , N \right\}$ for any $N \in \mathbb{N}$, if $P_1, \dots , P_s$ are pairwise disjoint, nonempty subsets of $[2p]$ such that $\bigcup_{j=1}^s P_j = [2p]$. We can associate an ordered partition to every $2p$-tuple $k=(k_1, \dots, k_{2p})$ in a natural way: if
\begin{equation}\label{k=l}
\left\{ k_1, \dots, k_{2p} \right\} = \left\{ \ell_1, \dots, \ell_s \right\}
\end{equation}
with $\ell_1<\cdots < \ell_s$, then for any $1 \le j \le s$ let
\[ P_j (k) = \left\{ i \in [2p] \,\, : \,\, k_i= \ell_j \right\} . \]
Then $P(k)=\left( P_1(k), \dots, P_s(k) \right)$ is an ordered partition of $[2p]$. In other words, the numbers $k_1, \dots, k_{2p}$ are written in increasing order as $\ell_1 < \dots < \ell_s$ (note $s \le 2p$ where we do not necessarily have equality since $k_1, \dots, k_{2p}$ need not be distinct), and we let $P_1(k)$ be the set of indices $i$ such that $k_i$ is the smallest, we let $P_2(k)$ be the set of indices $i$ such that $k_i$ is the second smallest etc. We will decompose the sum in \eqref{2pexpand} according to the value of $P(k)$. For any given ordered partition $P$ of $[2p]$ let
\[ S(P) = \sum_{\substack{k_1, \dots , k_{2p}=m+1 \\ P(k)=P}}^{m+n} \E e^{2 \pi i (S_{k_1} - S_{k_2} + \cdots + S_{k_{2p-1}} - S_{k_{2p}}) \alpha} . \]

Let us now fix an ordered partition $P=(P_1, \dots , P_s)$ of $[2p]$. Let $k$ be such that $P(k)=P$, and let $\ell_1 < \cdots < \ell_s$ be as in \eqref{k=l}. We have
\[ S_{k_1} - S_{k_2} + \cdots + S_{k_{2p-1}} - S_{k_{2p}} = \e_1 S_{\ell_1} + \cdots + \e_s S_{\ell_s} \]
where $\e_j=\sum_{i \in P_j} (-1)^{i+1}$ for any $1 \le j \le s$. Since $\ell_1 < \cdots < \ell_s$, it is now easy to write this as a sum of independent random variables:
\[ \e_1 S_{\ell_1} + \cdots + \e_s S_{\ell_s} = c_1 \sum_{t=1}^{\ell_1} X_t + c_2 \sum_{t=\ell_1+1}^{\ell_2} X_t + \cdots + c_s \sum_{t=\ell_{s-1}+1}^{\ell_s} X_t \]
where $c_j = \e_j + \e_{j+1} + \cdots + \e_s$. Note that $\e_1, \dots, \e_s$ and $c_1, \dots, c_s$ depend only on the fixed ordered partition $P$. Therefore
\[ \E e^{2 \pi i (S_{k_1} - S_{k_2} + \cdots + S_{k_{2p-1}} - S_{k_{2p}}) \alpha} = \varphi (2 \pi c_1 \alpha)^{\ell_1} \varphi (2 \pi c_2 \alpha)^{\ell_2-\ell_1} \cdots \varphi (2 \pi c_s \alpha)^{\ell_s-\ell_{s-1}} , \]
and
\begin{equation}\label{phiexpansion}
S(P) = \sum_{m+1 \le \ell_1 < \cdots < \ell_s \le m+n} \varphi (2 \pi c_1 \alpha)^{\ell_1} \varphi (2 \pi c_2 \alpha)^{\ell_2-\ell_1} \cdots \varphi (2 \pi c_s \alpha)^{\ell_s-\ell_{s-1}} .
\end{equation}
This is the generalization of \eqref{k1k2cases} to arbitrary $p \ge 1$. We are going to estimate \eqref{phiexpansion} in two different ways, according to the hypotheses \eqref{firstcond} and \eqref{secondcond}.

First, we prove Proposition \ref{momentestimate} (i), i.e.\ we assume \eqref{firstcond}. Observe that the set
\[ B=\left\{ k \in \mathbb{Z} \,\, : \,\, \left\| dk \alpha \right\| < \frac{1}{2} \left\| d \alpha \right\| \right\} \]
contains no two consecutive integers. Indeed, if $k,k+1 \in B$, then using the symmetry and the subadditivity of $\left\| \cdot \right\|$ we would have
\[ \left\| d \alpha \right\| \le \left\| d(k+1) \alpha \right\| + \left\| -dk \alpha \right\| < \frac{1}{2} \left\| d \alpha \right\| + \frac{1}{2} \left\| d \alpha \right\| , \]
a contradiction. Clearly $0 \in B$ and $\pm 1 \not\in B$. Consider the set
\[ \left\{ 1 \le j \le s \,\, : \,\, c_j \in B \right\} = \left\{ j_1, \dots , j_r \right\} \]
where $j_1<\cdots <j_r$. Note that
\[ c_1 = \e_1 + \cdots + \e_s = \sum_{i=1}^{2p} (-1)^{i+1} =0 \in B , \]
hence $j_1=1$. Since $B$ contains no consecutive integers, for any $1 \le a \le r-1$ we have
\[ \pm 1 \neq c_{j_a} - c_{j_{a+1}} = \sum_{j_a \le j < j_{a+1}} \e_j = \sum_{i \in \bigcup_{j_a \le j < j_{a+1}} P_j} (-1)^{i+1} . \]
Similarly, $\pm 1 \not\in B$ implies
\[ \pm 1 \neq c_{j_r} = \sum_{j_r \le j \le s} \e_j = \sum_{i \in \bigcup_{j_r \le j \le s} P_j} (-1)^{i+1} . \]
Therefore $\left| \bigcup_{j_a \le j < j_{a+1}} P_j \right| \ge 2$ and $\left| \bigcup_{j_r \le j \le s} P_j \right| \ge 2$. Using the fact that $P_1,\dots, P_s$ is a partition of $[2p]$ we thus obtain
\[ 2r \le \sum_{a=1}^{r-1} \left| \bigcup_{j_a \le j < j_{a+1}} P_j \right| + \left| \bigcup_{j_r \le j \le s} P_j \right| \le 2p . \]
In other words, $c_j \in B$ for at most $p$ indices $1 \le j \le s$.

Let us now apply the triangle inequality to \eqref{phiexpansion}. For any $j \neq j_1, \dots , j_r$ we have $c_j \not\in B$, hence condition \eqref{firstcond} implies
\[ |\varphi (2 \pi c_j \alpha)| \le 1 - c \left\| d c_j \alpha \right\|^{\beta} \le 1-\frac{c}{2^{\beta}} \left\| d \alpha \right\|^{\beta} . \]
For $j=j_1, \dots , j_r$ let us use the trivial estimate $|\varphi (2 \pi c_j \alpha)| \le 1$. Recall that $j_1=1$, which means that we in fact use the trivial estimate on the first factor $\varphi (2 \pi c_1 \alpha)^{\ell_1}$. This way we obtain
\begin{equation}\label{Destimate}
\left| S(P) \right| \le \sum_{m+1 \le \ell_1 < \cdots < \ell_s \le m+n} \left( 1-\frac{c}{2^{\beta}} \left\| d \alpha \right\|^{\beta} \right)^{\sum_{j \neq j_1, \dots , j_r} \left( \ell_j - \ell_{j-1} \right)} .
\end{equation}
We need to estimate the number of indices $m+1 \le \ell_1 < \dots < \ell_s \le m+n$ for which the total exponent is some fixed integer
\begin{equation}\label{totalexp}
\ell = \sum_{\substack{1 \le j \le s \\ j \neq j_1, \dots , j_r}} \left( \ell_j - \ell_{j-1} \right) .
\end{equation}
The special indices $\ell_{j_1}, \dots, \ell_{j_r}$ can be chosen in $\binom{n}{r} \le n^r/r!$ ways. Given $\ell_{j_1}, \dots, \ell_{j_r}$, the positive integers $\ell_j-\ell_{j-1}$, $j \neq j_1, \dots, j_r$ determine all of $\ell_1, \dots, \ell_s$. The number of ways to write $\ell$ as a sum of $s-r$ nonnegative integers (where the order of the terms matter) is $\binom{\ell+s-r-1}{s-r-1}$, provided $r<s$. The number of indices $m+1 \le \ell_1 < \dots < \ell_s \le m+n$ for which \eqref{totalexp} holds is thus at most $\frac{n^r}{r!} \binom{\ell+s-r-1}{s-r-1}$, and so \eqref{Destimate} gives
\[ |S(P)| \le \sum_{\ell=0}^{\infty} \frac{n^r}{r!} \binom{\ell +s-r-1}{s-r-1} \left( 1-\frac{c}{2^{\beta}} \left\| d \alpha \right\|^{\beta} \right)^{\ell} . \]
This is in fact a well-known power series which can be obtained by differentiating the geometric series $s-r-1$ times. Hence
\[ |S(P)| \le \frac{n^r}{r! \left( \frac{c}{2^{\beta}} \left\| d \alpha \right\|^{\beta} \right)^{s-r}} \]
if $r<s$, but clearly the same is true if $r=s$ (in which case our method simply estimates the absolute value of each term of \eqref{phiexpansion} by 1). Here $s \le 2p$ and $2^{\beta (s-r)} \le 4^{2p}$, therefore
\[ |S(P)| \le 4^{2p} \frac{n^r}{r! \left( c \left\| d \alpha \right\|^{\beta} \right)^{2p-r}} . \]
We have seen that $r \le p$ for any $P$. The number of ordered partitions of $[2p]$ is at most $(2p)^{2p}$, hence summing over all ordered partitions $P$ of $[2p]$ finally shows
\[ \E \left| \sum_{k=m+1}^{m+n} e^{2 \pi i S_k \alpha} \right|^{2p} = \sum_{P} S(P) \le (8p)^{2p} \max_{1 \le r \le p} \frac{n^r}{r! \left( c \left\| d \alpha \right\|^{\beta} \right)^{2p-r}} . \]

Next, we prove Proposition \ref{momentestimate} (ii), i.e.\ we assume \eqref{secondcond}. To estimate \eqref{phiexpansion} under hypothesis \eqref{secondcond} we will need the following lemma.

\begin{lem}\label{fmnslemma} Let $m \ge 0$ and $n, s \ge 1$ be integers, and let $\delta >0$. Consider
\[ f_{m,n,s} (x_1, \dots , x_s) = \sum_{m+1 \le \ell_1< \cdots < \ell_s \le m+n} x_1^{\ell_1} \cdots x_s^{\ell_s} . \]
For a given $x=(x_1, \dots , x_s) \in \mathbb{C}^s$ let
\begin{enumerate}
\item[(i)] $q=q(x)$ denote the maximum number of pairwise disjoint, nonempty intervals of consecutive integers $I_1, \dots , I_q \subseteq [s]$ such that $\left| 1-\prod_{j \in I_r} x_j  \right|<\delta$ for all $1 \le r \le q$,

\item[(ii)] $\displaystyle{K=K(x)=\max \left\{ \prod_{j=a}^s |x_j| \,\, : \,\, 1 \le a \le s \right\} \cup \{ 1 \} }$.
\end{enumerate}
Then
\[ |f_{m,n,s} (x_1, \dots, x_s)| \le K^{m+n+1} \left( \frac{2}{\delta}\right)^s \sum_{r=0}^q \frac{(\delta n)^r}{r!} . \]
\end{lem}

Note that $\delta>0$ is a free parameter, which can be chosen to optimize the estimate. As $\delta \to 0$, each term of the estimate is increasing, however the highest exponent $q$ of $n$ which shows up in the estimate is decreasing.

\begin{proof}[Proof of Lemma \ref{fmnslemma}] We may assume that $x_1, \dots , x_s \neq 0$, otherwise $f_{m,n,s}(x_1, \dots , x_s)=0$. We use induction on $s$. First, let $s=1$, and consider
\[ f_{m,n,1} (x_1)=\sum_{m+1\le \ell_1 \le m+n} x_1^{\ell_1}. \]
If $|1-x_1|<\delta$, then $q=1$. Using the triangle inequality and $|x_1| \le K$ we get
\[ |f_{m,n,1}(x_1)| \le \sum_{m+1\le \ell_1 \le m+n} K^{\ell_1} \le K^{m+n}n \le K^{m+n+1} \frac{2}{\delta} \left( 1 + \delta n \right) , \]
as claimed. If $|1-x_1| \ge \delta$, then $q=0$. In this case we evaluate $f_{m,n,1}(x_1)$ as a partial sum of a geometric series, and obtain
\[ |f_{m,n,1}(x_1)| = \left| \frac{x_1^{m+1}-x_1^{m+n+1}}{1-x_1} \right| \le \frac{K^{m+1}+K^{m+n+1}}{\delta} \le K^{m+n+1} \frac{2}{\delta}, \]
as claimed.

Suppose now that the lemma is true for $s-1$, and let us prove it for $s \ge 2$. Let $x=(x_1, \dots , x_s) \in \mathbb{C}^s$, and consider $q=q(x)$ and $K=K(x)$. We will treat the cases $|1-x_s|<\delta$ and $|1-x_s|\ge \delta$ separately.

Assume first that $|1-x_s|<\delta$. By fixing $\ell_s$ first, and summing over $\ell_1, \dots , \ell_{s-1}$ we get
\[ f_{m,n,s}(x_1, \dots , x_s) = \sum_{m+s \le \ell_s \le m+n} x_s^{\ell_s} \sum_{m+1 \le \ell_1 < \cdots < \ell_{s-1} \le \ell_s-1} x_1^{\ell_1} \cdots x_{s-1}^{\ell_{s-1}} . \]
Note that the inner sum is equal to $f_{m,\ell_s-m-1,s-1} (x_1, \dots , x_{s-1})$. Let $x^*=(x_1, \dots ,$ $x_{s-1}) \in \mathbb{C}^{s-1}$, and consider $q^*=q(x^*)$ and $K^*=K(x^*)$. We have $K^* \le K/|x_s|$ and $q^*=q-1$. Indeed, we can add the singleton $\{ s \}$ to the family of pairwise disjoint, nonempty intervals defining $q^*$. Applying the triangle inequality and the inductive hypothesis we get
\[ \begin{split} | f_{m,n,s}(x_1, x_2, \dots , x_s)| &\le \sum_{m+s \le \ell_s \le m+n} |x_s|^{\ell_s} |f_{m,\ell_s-m-1,s-1} (x_1, x_2, \dots , x_{s-1})| \\ &\le \sum_{m+s \le \ell_s \le m+n} |x_s|^{\ell_s} \left( \frac{K}{|x_s|} \right)^{\ell_s} \left( \frac{2}{\delta} \right)^{s-1} \sum_{r=0}^{q-1} \frac{(\delta (\ell_s-m-1))^r}{r!}. \end{split} \]
Here $|x_s|^{\ell_s} (K/|x_s|)^{\ell_s} \le K^{m+n+1}$, thus
\[ | f_{m,n,s}(x_1, \dots , x_s)| \le K^{m+n+1} \left( \frac{2}{\delta} \right)^{s-1} \sum_{r=0}^{q-1} \frac{\delta^r}{r!} \sum_{m+s\le \ell_s \le m+n} (\ell_s-m-1)^r . \]
The standard estimate
\[ \sum_{m+s\le \ell_s \le m+n} (\ell_s-m-1)^r = \sum_{\ell=s-1}^{n-1} \ell^r \le \frac{n^{r+1}}{r+1} \]
shows
\[ | f_{m,n,s}(x_1, \dots , x_s)| \le K^{m+n+1} \frac{1}{2} \left( \frac{2}{\delta} \right)^{s} \sum_{r=0}^{q-1} \frac{(\delta n)^{r+1}}{(r+1)!} . \]
Reindexing the sum over $r$ finishes the proof of the inductive step in the case $|1-x_s|<\delta$.

Finally, assume $|1-x_s| \ge \delta$. Fixing $m+1 \le \ell_1 < \cdots < \ell_{s-1} \le m+n-1$ first, and summing over $\ell_{s-1}<\ell_s \le m+n$ we obtain
\[ f_{m,n,s}(x_1, \dots , x_s) = \sum_{m+1 \le \ell_1 < \cdots < \ell_{s-1} \le m+n-1} x_1^{\ell_1} \cdots x_{s-1}^{\ell_{s-1}} \frac{x_s^{\ell_{s-1}+1}-x_s^{m+n+1}}{1-x_s} , \]
which yields the recursive formula
\[ \begin{split} f_{m,n,s} (x_1, \dots , x_s) = &\frac{x_s}{1-x_s} f_{m,n-1,s-1} (x_1, \dots , x_{s-1}x_s) \\ &- \frac{x_s^{m+n+1}}{1-x_s} f_{m,n-1,s-1} (x_1, \dots , x_{s-1}) . \end{split} \]
Let $x'=(x_1, \dots , x_{s-1} x_s) \in \mathbb{C}^{s-1}$, and consider $q'=q(x')$ and $K'=K(x')$. It is easy to see that $q' \le q$ and $K' \le K$. Applying the inductive hypothesis and using $\left| x_s/(1-x_s) \right| \le K/\delta$ we get
\begin{equation}\label{fmns1}
\left| \frac{x_s}{1-x_s} f_{m,n-1,s-1} (x_1, \dots , x_{s-1}x_s) \right| \le \frac{K}{\delta} K^{m+n} \left( \frac{2}{\delta} \right)^{s-1} \sum_{r=0}^q \frac{(\delta n)^r}{r!} .
\end{equation}
Let $x''=(x_1, \dots , x_{s-1}) \in \mathbb{C}^{s-1}$, and consider $q''=q(x'')$ and $K''=K(x'')$. It is easy to see that $q'' \le q$ and $K'' \le K/|x_s|$. Applying the inductive hypothesis and using $\left| x_s^{m+n+1}/(1-x_s) \right| \le K |x_s|^{m+n}/\delta$ we get
\begin{align}\label{fmns2}
&\left| \frac{x_s^{m+n+1}}{1-x_s} f_{m,n-1,s-1} (x_1, \dots , x_{s-1}) \right| \\
&  \phantom{99999999999999999} \le \frac{K |x_s|^{m+n}}{\delta} \left( \frac{K}{|x_s|} \right)^{m+n} \left( \frac{2}{\delta} \right)^{s-1} \sum_{r=0}^q \frac{(\delta n)^r}{r!} . \nonumber
\end{align}
Adding \eqref{fmns1} and \eqref{fmns2} we finally get
\[ | f_{m,n,s} (x_1, \dots , x_s)| \le K^{m+n+1} \left( \frac{2}{\delta} \right)^s \sum_{r=0}^q \frac{(\delta n)^r}{r!} . \]
This completes the proof of Lemma \ref{fmnslemma}.
\end{proof}

Let us now return to estimating $S(P)$ in \eqref{phiexpansion} under the hypothesis \eqref{secondcond}. If $\varphi (2 \pi c_j \alpha) =0$ for some $1 \le j \le s$, then $S(P)=0$. Otherwise $S(P)=f_{m,n,s} (x_1, \dots, x_s)$ as in Lemma \ref{fmnslemma} with $x_j = \varphi(2 \pi c_j \alpha)/\varphi (2 \pi c_{j+1} \alpha)$ for $1 \le j \le s-1$, and $x_s = \varphi (2 \pi c_s \alpha)$. First, note that for any $1 \le a \le s$,
\[ \prod_{j=a}^s |x_j| = |\varphi (2 \pi c_a \alpha)| \le 1 , \]
therefore we have $K=K(x)=1$. For an interval of consecutive integers $[a,b] \subseteq [s]$ with $1 \le a \le b<s$ condition \eqref{secondcond} implies
\[ \begin{split} \left| 1-\prod_{j \in [a,b]} x_j \right| = \left| 1- \frac{\varphi(2 \pi c_a \alpha)}{\varphi (2 \pi c_{b+1} \alpha)} \right| &\ge \left| \varphi (2 \pi c_a \alpha) - \varphi (2 \pi c_{b+1} \alpha) \right| \\ &\ge c \left\| d (c_a-c_{b+1}) \alpha \right\| \\&= c \left\| d(\e_a + \e_{a+1} + \cdots + \e_b ) \alpha \right\| . \end{split} \]
Similarly, for an interval of consecutive integers $[a,s] \subseteq [s]$ with $1 \le a \le s$ condition \eqref{secondcond} implies
\[ \begin{split} \left| 1-\prod_{j \in [a,s]} x_j \right| = \left| 1 - \varphi (2 \pi c_a \alpha ) \right| &= \left| \varphi (2 \pi c_a \alpha ) - \varphi (2 \pi 0) \right| \\ &\ge c \left\| d c_a \alpha \right\| \\&= c \left\| d (\e_a + \e_{a+1} + \cdots + \e_s) \alpha \right\| . \end{split} \]
Altogether, for any nonempty interval of consecutive integers $I \subseteq [s]$ we have
\[ \left| 1 - \prod_{j \in I} x_j \right| \ge c \left\| d \left( \sum_{j \in I} \e_j \right) \alpha \right\| = c \left\| d \left( \sum_{i \in \bigcup_{j \in I} P_j} (-1)^{i+1} \right) \alpha \right\| . \]
This estimate gives the idea to choose $\delta=c \left\|d \alpha \right\|$ in Lemma \ref{fmnslemma}. With this choice, $\left|1 - \prod_{j \in I} x_j \right| < \delta$ implies that
\[ \sum_{i \in \bigcup_{j \in I} P_j} (-1)^{i+1} \neq \pm 1, \]
and so $\left| \bigcup_{j \in I} P_j \right| \ge 2$. Hence if $I_1, \dots, I_q \subseteq [s]$ are pairwise disjoint, nonempty intervals of consecutive integers such that $\left| 1 - \prod_{j \in I_r} x_j \right| < \delta$ for every $1 \le r \le q$, then using the fact that $P_1, \dots , P_s$ is a partition of $[2p]$, we get
\[ 2q \le \sum_{r=1}^q \left| \bigcup_{j \in I_r} P_j \right| = \left| \bigcup_{j \in I_1 \cup \cdots \cup I_r} P_j \right| \le 2p. \]
Thus $q=q(x)$ as in Lemma \ref{fmnslemma} satisfies $q \le p$. Applying Lemma \ref{fmnslemma} with $K=1$, $q \le p$ and $\delta = c \left\| d \alpha \right\|$ to \eqref{phiexpansion}, we obtain
\begin{equation}\label{SPestimate}
|S(P)| \le \left( \frac{2}{c \left\| d \alpha \right\|} \right)^s \sum_{r=0}^p \frac{(c \left\| d \alpha \right\| n)^r}{r!}
\end{equation}
for any ordered partition $P=(P_1, \dots , P_s)$ of $[2p]$. Here $s \le 2p$. Since the number of ordered partitions of $[2p]$ is at most $(2p)^{2p}$, summing \eqref{SPestimate} over all ordered partitions $P$ of $[2p]$ finishes the proof of Proposition \ref{momentestimate} (ii):
\[ \E \left| \sum_{k=m+1}^{m+n} e^{2 \pi i S_k \alpha} \right|^{2p} = \sum_{P} S(P) \le (2p)^{2p} \left( \frac{2}{c \left\| d \alpha \right\|} \right)^{2p} \sum_{r=0}^p \frac{(c \left\| d \alpha \right\| n)^r}{r!} . \]
\end{proof}

\subsection{Examples}\label{Examples}

We were able to estimate the moments \eqref{2pmoment} in Proposition \ref{momentestimate} under conditions \eqref{firstcond} and \eqref{secondcond} for the characteristic function $\varphi$ of $X_1$. We now study probability distributions which satisfy those conditions. First of all note that if $X_1$ is integer-valued, then $\varphi (2 \pi x)$ is periodic, e.g.\ 1 is a period. Thus any lower estimate of $1-|\varphi (2 \pi x)|$ and $|\varphi (2 \pi x) - \varphi (2 \pi y)|$ needs to be periodic as well, which explains the use of the distance from the nearest integer function $\left\| \cdot \right\|$. The constant $d>0$ accounts for the fact that the smallest period of $\varphi (2 \pi x)$ or its absolute value might be less than 1.

It is easy to see that \eqref{firstcond} with some $0<\beta<2$ implies $\E X_1^2 = \infty$. Therefore we can only hope to prove \eqref{firstcond} with $0<\beta<2$ for certain ``heavy-tailed'' distributions. On the other hand, \eqref{firstcond} with $\beta=2$ holds in far more general circumstances.

\begin{prop}\label{examplesprop} Let $X_1$ be an integer-valued random variable with characteristic function $\varphi$.
\begin{enumerate}
\item[(i)] If $X_1$ is nondegenerate, then there exist a real number $c>0$ and an integer $d>0$ such that \eqref{firstcond} holds for any $x \in \R$ with $\beta=2$.

\item[(ii)] Let $0<\beta<2$. Suppose there exist constants $K,x_0>0$ such that for any $x \ge x_0$,
\begin{equation}\label{heavytail}
\E \left( X_1^2 I_{\left\{ |X_1| \le x \right\}} \right) \ge K x^{2-\beta} .
\end{equation}
Then there exist a real number $c>0$ and an integer $d>0$ such that \eqref{firstcond} holds for any $x \in \R$.
\end{enumerate}
\end{prop}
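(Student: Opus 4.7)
The plan is to use the identity $|\varphi(2\pi x)|^2 = \varphi_Y(2\pi x)$, where $Y = X_1 - X_1'$ is the symmetrization of $X_1$ by an independent copy. Writing $q_k = \P(Y=k)$, the elementary bound $|\sin(\pi t)| \ge 2\|t\|$ together with the symmetry of $Y$ gives
\begin{equation*}
1 - |\varphi(2\pi x)|^2 \;=\; 2\sum_k q_k \sin^2(\pi k x) \;\ge\; 8\, \E\|Yx\|^2 ,
\end{equation*}
and combined with $1 - |\varphi| \ge \tfrac{1}{2}(1 - |\varphi|^2)$ this yields the master inequality $1 - |\varphi(2\pi x)| \ge 4\, \E\|Yx\|^2$.

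For part (i), the nondegeneracy of $X_1$ provides distinct integers $n_1, n_2$ in its support; setting $d := |n_1 - n_2|$ one has $q_{\pm d} \ge p_{n_1} p_{n_2}$. Retaining only the $k = \pm d$ terms in the above sum yields $\E\|Yx\|^2 \ge 2q_d \|dx\|^2$ for every $x \in \R$, and \eqref{firstcond} with $\beta=2$ and $c=8q_d$ follows directly.

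For part (ii), since $X_1$ is integer-valued, $|\varphi(2\pi\cdot)|$ is periodic and $|\varphi(2\pi x)| < 1$ for $x \notin \mathbb{Z}/d$, where $d := \gcd(\mathrm{supp}(Y) \setminus \{0\})$. Continuity and compactness reduce \eqref{firstcond} to its behaviour for $x$ in a small neighbourhood of $0$, where $\|dx\| = d|x|$. For such $x$, $\|Yx\| = |Y|\,|x|$ when $|Y| \le 1/(2|x|)$, so $\E\|Yx\|^2 \ge x^2\, \E(Y^2 I_{|Y|\le 1/(2|x|)})$. The whole problem therefore reduces to establishing
\begin{equation*}
\E(Y^2 I_{|Y|\le M}) \;\ge\; c_0\, M^{2-\beta} \qquad (M \ge M_0),
\end{equation*}
since substituting $M = 1/(2|x|)$ then gives $\E\|Yx\|^2 \gtrsim |x|^\beta$, which combined with the master inequality completes the proof. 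To prove this reduced estimate, I would expand using independence,
\begin{equation*}
\E(Y^2 I_{|Y|\le M}) \;\ge\; \E\bigl((X_1 - X_1')^2\, I_{|X_1|\le M/2}\, I_{|X_1'|\le M/2}\bigr) \;=\; 2 P_{M/2}\, f(M/2) - 2\, g(M/2)^2,
\end{equation*}
where $P_x = \P(|X_1|\le x)$, $f(x) = \E X_1^2 I_{|X_1|\le x}$, and $g(x) = \E X_1 I_{|X_1|\le x}$. The hypothesis delivers $f(M/2) \ge K(M/2)^{2-\beta}$, and $P_{M/2} \to 1$.

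The main obstacle is controlling the subtracted term, specifically establishing $\limsup_{x\to\infty} g(x)^2/f(x) < 1$, which makes the preceding expression a positive multiple of $M^{2-\beta}$ for large $M$. The plan here is a two-scale argument: should $g(x_n)^2/f(x_n) \to 1$ along some sequence $x_n \to \infty$, the equality case of Cauchy--Schwarz forces the conditional distribution $X_1 \mid |X_1|\le x_n$ to concentrate sharply near a single centre $\mu_{x_n}$; the hypothesis $f(x) \ge K x^{2-\beta}$ with $\beta<2$ then dictates $|\mu_{x_n}| \asymp x_n^{1-\beta/2}$, with a conditional standard deviation of strictly smaller order. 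Comparing such concentration windows at geometrically separated scales $x_n \ll x_m$ would produce disjoint intervals, each carrying conditional mass approaching $1$, contradicting $\sum_k p_k = 1$.
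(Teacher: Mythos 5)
Your argument is correct in substance but takes a genuinely different route from the paper, and in part (i) it is actually more economical: the paper proves both parts by first reducing, via periodicity, continuity and compactness, to a neighbourhood of $0$ (using $1-|\varphi(2\pi x)|\ge\frac12(1-|\varphi(2\pi x)|^2)=\E\sin^2(\pi x(X_1-X_2))$ and monotone convergence for (i)), whereas your retention of only the $k=\pm d$ terms of the symmetrized sum gives \eqref{firstcond} with $\beta=2$ globally, with explicit $c=8q_d$ and no compactness step. For part (ii) the divergence is more significant: the paper never faces your ``main obstacle'' at all. It truncates the independent copy at a median $\mu$ of $|X_1|$: on the event $\{|X_2|\le\mu,\ 2\mu\le|X_1|\le\tfrac{1}{2|x|}-\mu\}$ one has $|X_1-X_2|\le\tfrac{1}{2|x|}$ and $(X_1-X_2)^2\ge X_1^2/4$, whence $\E\bigl((X_1-X_2)^2 I_{\{|X_1-X_2|\le 1/(2|x|)\}}\bigr)\ge\tfrac18\E\bigl(X_1^2 I_{\{|X_1|\le 1/(2|x|)-\mu\}}\bigr)-\tfrac{\mu^2}{2}$, so hypothesis \eqref{heavytail} applies directly and there is no squared-mean term $g(x)^2$ to control. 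Your route through $2P_{M/2}f(M/2)-2g(M/2)^2$ does work, but two points in your sketch need repair. First, the hypothesis is only a lower bound on $f$, so it gives $|\mu_{x_n}|\gtrsim x_n^{1-\beta/2}$ but not $|\mu_{x_n}|\asymp x_n^{1-\beta/2}$ (the only upper bound is the trivial $|\mu_{x_n}|\le x_n$); consequently ``geometrically separated scales'' need not separate the concentration windows. Second, the fix is easy and makes the two-scale comparison unnecessary: near-equality in Cauchy--Schwarz gives conditional variance $o(\mu_{x_n}^2)$ with $|\mu_{x_n}|\to\infty$, so by Chebyshev the conditional law of $X_1$ given $|X_1|\le x_n$ puts mass tending to $1$ outside any fixed interval $[-K,K]$, while tightness of the (fixed) law of $X_1$ forces that conditional mass to be at least $\P(|X_1|\le K)\ge 1-\e$ for suitable fixed $K$ --- a one-scale contradiction (alternatively, keep your two windows but choose the scales along a subsequence on which $|\mu_{x_n}|$ at least triples). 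With these adjustments your proof is complete; it buys a sharper, fully explicit part (i), at the price of a longer, tightness-based argument in part (ii) compared with the paper's median-truncation trick.
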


\begin{proof} Let $X_2$ be a random variable independent of and with the same distribution as $X_1$. Then
\[ \E e^{2 \pi i x (X_1-X_2)} = \E e^{2 \pi i x X_1} \E e^{-2\pi i x X_2} = |\varphi (2 \pi x)|^2 . \]
By taking the real part of both sides and using a trigonometric identity we obtain
\[ 1 - |\varphi (2 \pi x)|^2 = \E \left( 1 - \cos \left( 2 \pi x (X_1-X_2) \right) \right) = 2 \E \sin^2 \left( \pi x (X_1-X_2) \right) . \]
Let $f: \R \to \R$, $f(x)= \E \sin^2 \left( \pi x (X_1-X_2) \right)$. Since
\[ 1 - |\varphi (2 \pi x)| \ge \frac{1-|\varphi (2 \pi x)|^2}{2} = f(x), \]
it will be enough to find a lower estimate for $f(x)$.

Let $d>0$ denote the greatest common divisor of the (finite or infinite) support of $X_1-X_2$. Note that the nondegeneracy of $X_1$ implies that this support contains a nonzero integer, making $d>0$ well-defined. Clearly, $f$ is periodic with period $1/d$. It is also easy to see that $f(x)=0$ if and only if $x(X_1-X_2) \in \mathbb{Z}$ with probability 1, or equivalently, if and only if $x$ is an integer multiple of $1/d$. Furthermore, $f$ is continuous, which can be seen e.g.\ from Lebesgue's dominated convergence theorem. Hence to prove an estimate of the form
\begin{equation}\label{flowerestimate}
f(x) \ge c \left\| d x \right\|^{\beta}
\end{equation}
for some constant $c>0$ it is enough to prove \eqref{flowerestimate} in an open neighborhood of $0$.

Applying the estimate $\sin^2 (\pi t) \ge 4t^2$, valid for any $|t| \le 1/2$, with $t=x(X_1-X_2)$ gives
\begin{equation}\label{flowerestimate2}
f(x) \ge 4 x^2 \E \left( \left( X_1-X_2 \right)^2 I_{\left\{ |X_1-X_2| \le \frac{1}{2|x|} \right\}} \right) .
\end{equation}

First, we prove (i). We have $\E (X_1-X_2)^2 >0$ (possibly infinite), because $X_1$ is nondegenerate. From the monotone convergence theorem we can see that
\[ \E \left( \left( X_1-X_2 \right)^2 I_{\left\{ |X_1-X_2| \le \frac{1}{2|x|} \right\}} \right) \]
is greater than a fixed positive constant in an open neighborhood of $0$. Therefore \eqref{flowerestimate2} shows that \eqref{flowerestimate} holds with $\beta=2$ and some $c>0$ in an open neighborhood of $0$, and we are done.

Next, we prove (ii). Let $\mu$ denote any median of $|X_1|$, i.e.\ $\P (|X_1|\le \mu) \ge 1/2$ and $\P (|X_1|\ge \mu) \ge 1/2$. If both $2 \mu \le |X_1| \le 1/(2|x|)-\mu$ and $|X_2| \le \mu$, then $|X_1-X_2| \le 1/(2|x|)$ and $(X_1-X_2)^2 \ge X_1^2/4$. Therefore
\[ \left( X_1-X_2 \right)^2 I_{\left\{ |X_1-X_2| \le \frac{1}{2|x|} \right\}} \ge \frac{X_1^2}{4} I_{\left\{ 2 \mu \le |X_1| \le \frac{1}{2|x|}-\mu \right\}} I_{\left\{|X_2| \le \mu \right\}} . \]
Taking the expected value and using the definition of a median we obtain
\[ \begin{split} \E \left( \left( X_1-X_2 \right)^2 I_{\left\{ |X_1-X_2| \le \frac{1}{2|x|} \right\}} \right) &\ge \frac{1}{8} \E \left( X_1^2 I_{\left\{ 2 \mu \le |X_1| \le \frac{1}{2|x|}-\mu \right\}} \right) \\ &\ge \frac{1}{8} \E \left( X_1^2 I_{\left\{|X_1| \le \frac{1}{2|x|}-\mu \right\}} \right) - \frac{\mu^2}{2}. \end{split} \]
Equation \eqref{flowerestimate2} and condition \eqref{heavytail} thus imply that \eqref{flowerestimate} holds with some $c>0$ in an open neighborhood of $0$.
\end{proof}

Next, we study when relation \eqref{secondcond} holds. For the sake of simplicity, assume that $X_1$ is integer-valued, and $\E |X_1|<\infty$. Then $\varphi (2 \pi x)$ has period 1, therefore we may visualize it as a continuously differentiable, closed curve on the Euclidean plane. It is easy to see that the ``self-intersection points'' of this curve, i.e.\ the solutions of the equation $\varphi (2 \pi x) = \varphi (2 \pi y)$, $x \neq y$ will play an important role. Indeed, $|\varphi (2 \pi x) - \varphi (2 \pi y)|$ can be small in two different ways: either $x$ and $y$ are close to each other, or they are close to two different self-intersection points of the curve. In the first case a lower estimate linear in $|x-y|$ can be deduced by assuming $\varphi' \neq 0$ anywhere on $\R$. To handle the second case, we will impose a ``rationality'' and a ``linear independence'' condition on the self-intersection points.

\begin{prop}\label{secondcondprop} Let $X_1$ be an integer-valued random variable with characteristic function $\varphi$ such that $\E |X_1|<\infty$ and $\varphi' \neq 0$ anywhere on $\R$. Let $p>0$ denote the smallest period of $\varphi (2 \pi x)$. Suppose that the equation $\varphi (2 \pi x) = \varphi (2 \pi y)$, $x,y \in [0,p)$, $x \neq y$, has finitely many solutions $(x_1,y_1), \dots, (x_n,y_n)$, and that $x_k-y_k \in \mathbb{Q}$ and $\varphi' (2 \pi x_k)/ \varphi' (2 \pi y_k) \not\in \R$ for any $k=1,\dots, n$. Then there exist a real number $c>0$ and an integer $d>0$ such that \eqref{secondcond} holds for any $x,y \in \R$.
\end{prop}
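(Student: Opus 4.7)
The plan is to use periodicity to reduce the problem to a compact region, and then verify the desired inequality separately in three regimes: a strip around the diagonal $x=y$, neighborhoods of each of the finitely many self-intersection points, and the compact complement. Since $X_1$ is integer-valued and $\varphi$ is nonconstant (forced by $\varphi'\neq 0$), $p$ divides $1$, say $p=1/d_0$ with $d_0\in\mathbb{Z}_{>0}$. Using $x_k-y_k\in\mathbb{Q}$, set $d$ to be the least common multiple of $d_0$ and the denominators of $x_1-y_1,\dots,x_n-y_n$; then $d(x_k-y_k)\in\mathbb{Z}$ for every $k$ and $dp\in\mathbb{Z}$, so both sides of the target inequality are invariant under $x\mapsto x+p$ and $y\mapsto y+p$. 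Substituting $x=y+\delta$, this reduces the problem to $y\in[0,p]$, $\delta\in[-p/2,p/2]$; in this rectangle the zero set of $|\varphi(2\pi(y+\delta))-\varphi(2\pi y)|$ consists of the diagonal $\delta=0$ together with the finitely many isolated points $(y_k,\delta_k)$, where $\delta_k\in(-p/2,p/2]$ is the reduction of $x_k-y_k$ modulo $p$ (and $d\delta_k\in\mathbb{Z}$).

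On the diagonal strip, the hypothesis $\varphi'\neq 0$ combined with continuity and periodicity yields $m:=\min_y|\varphi'(2\pi y)|>0$. A first-order Taylor expansion then gives $|\varphi(2\pi(y+\delta))-\varphi(2\pi y)|\ge \pi m|\delta|$ for $|\delta|$ below some $\varepsilon_1\le 1/(2d)$; for such $\delta$ the equality $\|d\delta\|=d|\delta|$ holds, producing the desired bound with constant $\pi m/d$.

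Near a self-intersection $(y_k,\delta_k)$, parameterize $y=y_k+v$, $y+\delta=x_k+u$, so $\delta=\delta_k+(u-v)$, and $d\delta_k\in\mathbb{Z}$ gives $\|d\delta\|=\|d(u-v)\|=d|u-v|$ once $|u-v|\le 1/(2d)$. Because $\varphi(2\pi x_k)=\varphi(2\pi y_k)$, Taylor expansion yields
\[ \varphi(2\pi x)-\varphi(2\pi y)=2\pi(Au-Bv)+O(u^2+v^2),\quad A=\varphi'(2\pi x_k),\ B=\varphi'(2\pi y_k). \]
The real quadratic form $|Au-Bv|^2=|A|^2u^2-2\Re(A\overline B)uv+|B|^2v^2$ has determinant $|A|^2|B|^2-\Re(A\overline B)^2=(\Im(A\overline B))^2$, which is strictly positive exactly because $A/B\notin\mathbb{R}$; hence its smallest eigenvalue $\lambda_{\min}$ is positive and $|Au-Bv|\ge\sqrt{\lambda_{\min}/2}\,|u-v|$. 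Shrinking the neighborhood so that the $O(u^2+v^2)$ error is absorbed into half the main term yields the claimed inequality with a constant of order $\sqrt{\lambda_{\min}}/d$. The same argument applies at each of the finitely many self-intersection points in the $(y,\delta)$ rectangle independently.

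On the closed complement of these neighborhoods---compact and disjoint from the zero set---the function $|\varphi(2\pi(y+\delta))-\varphi(2\pi y)|$ is continuous and positive, hence bounded below by some $c_0>0$; combined with $\|d\delta\|\le 1/2$, this gives the inequality with constant $2c_0$. Taking $c$ to be the minimum of the three constants completes the proof. The decisive step is the quadratic-form argument near each self-intersection: the algebraic hypothesis $\varphi'(2\pi x_k)/\varphi'(2\pi y_k)\notin\mathbb{R}$ must be converted into strict positive definiteness of $|Au-Bv|^2$ as a form on $\mathbb{R}^2$; without it, the two linear terms could align along a common real direction and cancel, destroying any linear lower bound in $|u-v|$. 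The seemingly peripheral rationality $x_k-y_k\in\mathbb{Q}$ is exactly what lets the Taylor quantity $|u-v|$ be converted losslessly into the arithmetic quantity $\|d\delta\|$.
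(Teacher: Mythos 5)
Your proof is correct in structure and follows essentially the same route as the paper's: reduce by periodicity to a compact region, prove a linear lower bound near the diagonal via $\varphi'\neq 0$, use the non-reality of $\varphi'(2\pi x_k)/\varphi'(2\pi y_k)$ to get a positive-definite estimate $|\varphi'(2\pi x_k)u-\varphi'(2\pi y_k)v|\gtrsim |u|+|v|$ near each self-intersection (your determinant computation $(\operatorname{Im}(A\bar B))^2>0$ is the same fact the paper extracts from linear independence of the two derivatives as planar vectors), use the rationality of $x_k-y_k$ to convert $|u-v|$ into $\|d(x-y)\|$, and finish by compactness away from the zero set; your $(y,\delta)$ parametrization merely absorbs the corner points $(0,1),(1,0)$ of the paper's set $C$ into the diagonal case. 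One small inaccuracy: the Taylor remainder $O(u^2+v^2)$ near $(x_k,y_k)$ is not justified, since $\E|X_1|<\infty$ gives only one derivative of $\varphi$ (no Lipschitz bound on $\varphi'$ in general). This is harmless: uniform continuity of $\varphi'$ (which follows from $\E|X_1|<\infty$ and periodicity, and is what the paper uses) gives a remainder bounded by $\e(|u|+|v|)$ with $\e$ arbitrarily small on a small enough neighborhood, and since your main term is itself of order $\sqrt{u^2+v^2}\asymp |u|+|v|$, the absorption into half the main term goes through unchanged.
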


\begin{proof} Clearly $p>0$ is the reciprocal of the greatest common divisor of the (finite or infinite) support of $X_1$. By considering $pX_1$ instead, we may therefore assume $p=1$. Let $d>0$ be an integer such that $d(x_k-y_k) \in \mathbb{Z}$ for every $k=1,\dots, n$.

The assumption $\E |X_1|<\infty$ implies that $\varphi$ is differentiable, and $\varphi'$ is uniformly continuous. The periodicity of $\varphi$ thus shows that $|\varphi'| \ge K_0$ for some constant $K_0>0$. For any $k=1,\dots, n$ the derivatives $\varphi' (2 \pi x_k)$ and $\varphi' (2 \pi y_k)$ are linearly independent as planar vectors, because $\varphi' (2 \pi x_k)/ \varphi' (2 \pi y_k) \not\in \R$. From the equivalence of finite-dimensional norms, we get that for any $u,v \in \R$,
\begin{equation}\label{derivativeest}
|\varphi'(2 \pi x_k) u - \varphi'(2 \pi y_k) v| \ge K_k \left( |u|+|v| \right)
\end{equation}
with some constant $K_k>0$. Let $K=\min \left\{ K_k \,\, : \,\, 0 \le k \le n \right\}$.

A simple corollary of the uniform continuity of $\varphi'$ is that the convergence
\[ \frac{\varphi (2 \pi t) - \varphi (2 \pi a)}{2 \pi t- 2 \pi a} \to \varphi' (2 \pi a) \]
as $|t-a| \to 0$ is uniform in $t,a \in \R$. In particular, there exists a constant $r>0$ such that whenever $|t-a|<r$, then
\begin{equation}\label{uniformdiff}
\left| \varphi (2 \pi t) - \varphi (2 \pi a) - \varphi'(2 \pi a) (2 \pi t-2 \pi a) \right| \le \pi K |t-a| .
\end{equation}

Consider the compact set
\[ C = \left\{ (x,y) \in [0,1]^2 \,\, : \,\, \varphi (2 \pi x) = \varphi (2 \pi y) \right\} . \]
Note that $C$ consists of the diagonal $x=y$, the points $(0,1)$, $(1,0)$ and the finite point set $(x_k,y_k)$, $k=1, \dots, n$. Let $(x,y) \in [0,1]^2$ be such that $\textrm{dist} \, ((x,y),C)<r/2$, where $\textrm{dist}$ denotes the distance from a point to a set. There are three cases: $(x,y)$ is either close to the diagonal, to $(0,1)$ or $(1,0)$, or to the point $(x_k,y_k)$ for some $k=1,\dots,n$.

First, assume that the distance of $(x,y)$ from the diagonal is less than $r/2$. Then $|x-y|<r$, thus \eqref{uniformdiff} with $t=x$ and $a=y$ implies
\[ \begin{split} \left| \varphi (2 \pi x) - \varphi (2 \pi y) \right| &\ge |\varphi' (2 \pi y)| \cdot |2 \pi x- 2 \pi y| - \pi K |x-y| \\ &\ge \frac{\pi K}{d} |d(x-y)| \ge \frac{\pi K}{d} \left\| d (x-y) \right\| . \end{split} \]
Assume next that the Euclidean distance from $(x,y)$ to $(0,1)$ is less than $r/2$. Then \eqref{uniformdiff} applies with $t=x$ and $a=y-1$. Using the periodicity of $\varphi$ we thus obtain
\[ \begin{split} |\varphi (2 \pi x) - \varphi (2 \pi y)| &= |\varphi (2 \pi x) - \varphi (2 \pi (y-1))| \\ &\ge |\varphi' (2 \pi (y-1))| \cdot |2 \pi x- 2 \pi (y-1)| - \pi K |x-(y-1)| \\ &\ge \frac{\pi K}{d} |d(x-y)+d| \ge \frac{\pi K}{d} \left\| d (x-y) \right\| . \end{split} \]
A similar estimate holds when the distance from $(x,y)$ to $(1,0)$ is less than $r/2$. Finally, assume that the distance from $(x,y)$ to $(x_k,y_k)$ is less than $r/2$ for some $k=1, \dots, n$. In this case \eqref{uniformdiff} applies with $t=x$ and $a=x_k$, and also with $t=y$ and $a=y_k$. Since $\varphi (2 \pi x_k)=\varphi (2 \pi y_k)$, we have
\[ \begin{split} \left| \varphi (2 \pi x) - \varphi (2 \pi y) \right| \ge &\left| \varphi' (2 \pi x_k) (2 \pi x-2 \pi x_k) - \varphi' (2 \pi y_k) (2 \pi y-2 \pi y_k) \right| \\ &- \pi K |x-x_k| - \pi K |y-y_k| . \end{split} \]
Applying \eqref{derivativeest} with $u=x-x_k$ and $v=y-y_k$ we obtain
\[ \begin{split} \left| \varphi (2 \pi x) - \varphi (2 \pi y) \right| &\ge \pi K \left( |x-x_k|+|y-y_k| \right) \\ &\ge \frac{\pi K}{d} | d(x-y)-d(x_k-y_k) | \ge \frac{\pi K}{d} \left\| d (x-y) \right\| . \end{split} \]
Altogether we have shown that for any $(x,y) \in [0,1]^2$ such that $\textrm{dist}\, ((x,y),C)<r/2$ we have
\[ |\varphi (2 \pi x) - \varphi (2 \pi y)| \ge \frac{\pi K}{d} \left\| d (x-y) \right\| . \]
Using the compactness of the corresponding set it is easy to see that for any $(x,y) \in [0,1]^2$ such that $\textrm{dist}\, ((x,y),C) \ge r/2$ we have
\[ |\varphi (2 \pi x) - \varphi (2 \pi y)| \ge c' \left\| d (x-y) \right\| \]
with some constant $c'>0$. Hence \eqref{secondcond} is satisfied with $c=\min \left\{ \pi K/d, c' \right\}$ for any $(x,y) \in [0,1]^2$. By the periodicity of $\varphi$, \eqref{secondcond} is therefore satisfied for all $x,y \in \R$.
\end{proof}

\begin{cor}\label{corollary} Let $X_1$ be a random variable with characteristic function $\varphi$. Suppose that $\P (X_1=a)=\P (X_1=b)=1/2$ for some $a,b \in \mathbb{Z}$ with $a \not\equiv b \pmod{2}$. Then there exist a real number $c>0$ and an integer $d>0$ such that \eqref{secondcond} holds for any $x,y \in \R$.
\end{cor}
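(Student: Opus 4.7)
The plan is to verify the hypotheses of Proposition \ref{secondcondprop} for $\varphi(2\pi x)=\frac{1}{2}(e^{2\pi iax}+e^{2\pi ibx})$. As a preliminary reduction, since $a\not\equiv b\pmod 2$ forces $g:=\gcd(a,b)$ to be odd, I would replace $X_1$ by $X_1/g$: the identity $\varphi(2\pi x)=\tilde\varphi(2\pi gx)$, with $\tilde\varphi$ the characteristic function of $X_1/g$, turns any bound of the form \eqref{secondcond} for $\tilde\varphi$ into one for $\varphi$ (with the integer $d$ multiplied by $g$). So I would assume henceforth that $\gcd(a,b)=1$, whence the smallest period of $\varphi(2\pi x)$ is $p=1$. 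Set $m=a+b$ and $n=b-a$; both are odd and nonzero, and since $\gcd(m,n)$ divides $\gcd(2a,2b)=2$ we also have $\gcd(m,n)=1$.

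First I would check that $\varphi'\neq 0$ on all of $\R$. Differentiation yields
\[ 2\pi\varphi'(2\pi x)=e^{\pi imx}\bigl(i\pi m\cos(\pi nx)-\pi n\sin(\pi nx)\bigr), \]
whose imaginary and real parts (after factoring out the unit $e^{\pi imx}$) can vanish simultaneously only if $\cos(\pi nx)=\sin(\pi nx)=0$, which is impossible.

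Next I would enumerate the solutions of $\varphi(2\pi x)=\varphi(2\pi y)$ on $[0,1)^2$ with $x\neq y$. Since two sums of two unit complex numbers coincide only if the underlying multisets of summands coincide, either (A) $e^{2\pi iax}=e^{2\pi iay}$ and $e^{2\pi ibx}=e^{2\pi iby}$, which combined with $\gcd(a,b)=1$ and $x,y\in[0,1)$ gives $x=y$, or (B) $e^{2\pi iax}=e^{2\pi iby}$ and $e^{2\pi ibx}=e^{2\pi iay}$. In case (B), adding and subtracting the congruences $ax\equiv by$ and $bx\equiv ay\pmod 1$ yields $m(x-y)\in\mathbb{Z}$ and $n(x+y)\in\mathbb{Z}$; writing $x-y=k/m$, $x+y=\ell/n$ with $k,\ell\in\mathbb{Z}$, a direct substitution shows that the additional consistency requirement $ax-by\in\mathbb{Z}$ reduces to $k-\ell$ being even. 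The constraints $x,y\in[0,1)$ and $k\neq 0$ then leave finitely many solutions $(x_i,y_i)$, each with $x_i-y_i=k/m\in\mathbb{Q}$.

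The hard part will be the remaining requirement $\varphi'(2\pi x_i)/\varphi'(2\pi y_i)\notin\R$. Writing $u=e^{2\pi iax_i}$ and $v=e^{2\pi ibx_i}$, the swap in case (B) gives $\varphi'(2\pi x_i)=\pi i(au+bv)$ and $\varphi'(2\pi y_i)=\pi i(av+bu)$. Using $|u|=|v|=1$, the condition $(au+bv)\overline{(av+bu)}\in\R$ simplifies to $(a^2-b^2)(u\bar v-\bar u v)=0$. Since $a\neq\pm b$ (from $a\neq b$ and $m=a+b\neq 0$), this forces $u\bar v\in\R$, hence $e^{-2\pi inx_i}\in\R$ and $2nx_i\in\mathbb{Z}$. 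Substituting $x_i=(kn+\ell m)/(2mn)$ converts this to $m\mid kn$, and $\gcd(m,n)=1$ then gives $m\mid k$, contradicting $0<|k|<m$. With every hypothesis of Proposition \ref{secondcondprop} verified, the corollary follows with the constants produced there.
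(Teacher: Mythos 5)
Your overall strategy is the same as the paper's: verify the hypotheses of Proposition \ref{secondcondprop} after reducing to $\gcd(a,b)=1$, compute $\varphi'$, and rule out real derivative ratios at the self-intersection points using the coprimality of $a+b$ and $a-b$. However, there is a genuine gap in your enumeration of the solutions of $\varphi(2\pi x)=\varphi(2\pi y)$, $x\neq y$. The claim that two sums of two unit complex numbers coincide only if the underlying multisets coincide is false precisely when the common sum is $0$: any antipodal pair $\{u,-u\}$ sums to $0$. Since $\varphi(2\pi x)=e^{\pi i(a+b)x}\cos\bigl(\pi(a-b)x\bigr)$ vanishes at the $|a-b|$ points of $[0,1)$ with $(a-b)x\in\tfrac12+\mathbb{Z}$, whenever $|a-b|\ge 3$ (e.g.\ $a=1$, $b=4$, which survives your reduction) there are off-diagonal pairs $(x,y)$ with $\varphi(2\pi x)=\varphi(2\pi y)=0$ that satisfy neither your case (A) nor your case (B): for $a=1,b=4$ and $(x,y)=(1/6,1/2)$ one checks directly that $e^{2\pi iax}\neq e^{2\pi iay}$ and $e^{2\pi iax}\neq e^{2\pi iby}$. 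Proposition \ref{secondcondprop} requires the rationality and non-real-ratio conditions at \emph{all} self-intersection points, so these missed solutions leave the proof incomplete as written.

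The gap is fixable, and the repair is exactly the paper's ``first case'': for such a pair, $(a-b)x$ and $(a-b)y$ both lie in $\tfrac12+\mathbb{Z}$, so $(a-b)(x-y)\in\mathbb{Z}$ and in particular $x-y\in\mathbb{Q}$; moreover, since $\sin(\pi(a-b)x),\sin(\pi(a-b)y)=\pm1$ and the cosine factors vanish, the ratio of derivatives reduces to $\pm e^{\pi i(a+b)(x-y)}$, which can only be real if $(a+b)(x-y)\in\mathbb{Z}$, and then coprimality of $a-b$ and $a+b$ would force $x-y\in\mathbb{Z}$, impossible for distinct $x,y\in[0,1)$. Your treatment of the nonzero-sum case (your case (B)) is correct and is essentially the paper's second case in different notation; only the zero-sum solutions need to be added to complete the argument.
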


\begin{proof} We will show that $X_1$ satisfies the conditions of Proposition \ref{secondcondprop}. The characteristic function of $X_1$ is
\[ \varphi (t) = \frac{1}{2} e^{iat} + \frac{1}{2} e^{ibt} = e^{i \frac{a+b}{2}t} \cos \left( \frac{a-b}{2} t \right) . \]
First, note that
\[ |\varphi' (t)| = \frac{1}{2} \left| a e^{iat} + b e^{ibt} \right| \ge \frac{1}{2} \left| |a|-|b| \right| \ge \frac{1}{2} , \]
therefore $\varphi' \neq 0$ anywhere on $\R$.

Similarly to Proposition \ref{secondcondprop} we may assume that $a$ and $b$ are relatively prime, i.e.\ that the smallest period of $\varphi (2 \pi x)$ is 1. Observe that $a \not\equiv b \pmod{2}$ implies that $a-b$ and $a+b$ are also relatively prime.

Consider the equation $\varphi (2 \pi x) = \varphi (2 \pi y)$, $x \neq y$, which is equivalent to
\begin{equation}\label{phi=phi}
e^{\pi i (a+b)(x-y)} \cos \left( \pi (a-b) x \right) = \cos \left( \pi (a-b) y \right) , \qquad x \neq y .
\end{equation}
We have
\begin{equation}\label{phi'/phi'}
\frac{\varphi' (2 \pi x)}{\varphi' (2 \pi y)} = e^{\pi i (a+b)(x-y)} \frac{i(a+b) \cos (\pi (a-b)x) - (a-b) \sin (\pi (a-b)x)}{i(a+b) \cos (\pi (a-b)y) - (a-b) \sin (\pi (a-b)y)} .
\end{equation}
We distinguish two cases in \eqref{phi=phi}: either $\cos (\pi (a-b)x) = \cos (\pi (a-b)y)=0$, or $\exp (\pi i (a+b)(x-y)) \in \R$. The first case gives finitely many solutions $(x_k,y_k)$ within a period $[0,1)$, each of which satisfies $(a-b)(x_k-y_k) \in \mathbb{Z}$. Since $\sin (\pi (a-b)x_k)$ and $\sin (\pi (a-b)y_k)$ are both $\pm 1$, for these solutions \eqref{phi'/phi'} simplifies to
\[ \frac{\varphi' (2 \pi x_k)}{\varphi' (2 \pi y_k)} = \pm e^{\pi i (a+b) (x_k-y_k)} . \]
By way of contradiction, suppose that this ratio is purely real. Then $(a+b)(x_k-y_k) \in \mathbb{Z}$. Since $a-b$ and $a+b$ are relatively prime, the integrality of $(a-b)(x_k-y_k)$ and $(a+b)(x_k-y_k)$ implies that $x_k-y_k$ is also an integer. This is impossible for $x_k, y_k$ in the period interval $[0,1)$.

Finally, suppose $\exp (\pi i (a+b)(x-y)) \in \R$. It is easy to see that in this case \eqref{phi=phi} also gives finitely many solutions $(x_{\ell},y_{\ell})$ in $[0,1)$, each of which satisfies $(a+b)(x_{\ell}-y_{\ell}) \in \mathbb{Z}$. Since $\exp (\pi i (a+b)(x_{\ell}-y_{\ell}))=\pm 1$, \eqref{phi'/phi'} is purely real if and only if
\begin{multline*}
- \cos (\pi (a-b)x_{\ell}) \sin (\pi (a-b)y_{\ell}) + \cos (\pi (a-b)y_{\ell}) \sin (\pi (a-b)x_{\ell}) \\ = \sin (\pi (a-b)(x_{\ell} - y_{\ell})) =0,
\end{multline*}
which is equivalent to $(a-b)(x_{\ell}-y_{\ell}) \in \mathbb{Z}$. Since $a-b$ and $a+b$ are relatively prime, $(a+b)(x_{\ell} - y_{\ell}) \in \mathbb{Z}$ and $(a-b)(x_{\ell} - y_{\ell}) \in \mathbb{Z}$ would imply $x_{\ell}-y_{\ell} \in \mathbb{Z}$, which is impossible for $x_{\ell},y_{\ell}$ in the period interval $[0,1)$. Therefore the solutions $(x_{\ell}, y_{\ell})$ also satisfy $\varphi' (2 \pi x_{\ell})/\varphi' (2 \pi y_{\ell}) \not\in \R$.
\end{proof}

The simplest case in which the ``rationality'' and the ``linear independence'' conditions on the self-intersection points of $\varphi$ in Proposition \ref{secondcondprop} hold is when $\varphi$ is a simple closed curve, i.e.\ when there are no self-intersection points at all. If $X_1=1$ a.s., then $\varphi (2 \pi x)$ parametrizes the unit circle. Thus if $X_1=1$ has a high enough probability, then $\varphi (2 \pi x)$ will look like a slightly ``deformed'' circle, and we can hope that this slight deformation will not introduce any self-intersection points. It is very easy to turn this idea into a precise proof as follows.

\begin{prop} Let $X_1$ be an integer-valued random variable such that $\E |X_1| < 2 \P (X_1=1)$. Then the characteristic function $\varphi$ of $X_1$ satisfies \eqref{secondcond} with $c=8 \P (X_1=1)- 4\E |X_1|>0$ and $d=1$.
\end{prop}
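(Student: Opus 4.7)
The plan is to verify \eqref{secondcond} directly by Fourier-expanding $\varphi$, isolating the dominant $n=1$ term, and controlling the rest by a pointwise multiplier inequality.

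First I would write, using that $X_1$ is integer-valued,
\[
\varphi(2\pi x) - \varphi(2\pi y) = \sum_{n \in \mathbb{Z}} \mathbb{P}(X_1 = n)\bigl(e^{2\pi i n x} - e^{2\pi i n y}\bigr),
\]
and peel off the $n=1$ contribution:
\[
\varphi(2\pi x) - \varphi(2\pi y) = \mathbb{P}(X_1=1)\bigl(e^{2\pi i x} - e^{2\pi i y}\bigr) + R(x,y),
\]
where $R(x,y) = \sum_{n \ne 1} \mathbb{P}(X_1=n)\bigl(e^{2\pi i n x} - e^{2\pi i n y}\bigr)$.

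The key step is the elementary identity $|e^{2\pi i n x} - e^{2\pi i n y}| = 2|\sin(\pi n (x-y))|$ combined with the multiplier inequality $|\sin(n\theta)| \le |n|\,|\sin\theta|$ valid for every integer $n$ (easily proved by induction from the sine addition formula). Applied with $\theta = \pi(x-y)$ this yields, for every $n \in \mathbb{Z}$,
\[
|e^{2\pi i n x} - e^{2\pi i n y}| \le |n| \cdot |e^{2\pi i x} - e^{2\pi i y}|.
\]
Summing against the distribution of $X_1$ gives
\[
|R(x,y)| \le \Bigl(\sum_{n \ne 1} |n| \mathbb{P}(X_1=n)\Bigr) |e^{2\pi i x} - e^{2\pi i y}| = \bigl(\mathbb{E}|X_1| - \mathbb{P}(X_1=1)\bigr) |e^{2\pi i x} - e^{2\pi i y}|.
\]

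By the reverse triangle inequality,
\[
|\varphi(2\pi x) - \varphi(2\pi y)| \ge \bigl(2\mathbb{P}(X_1=1) - \mathbb{E}|X_1|\bigr)\,|e^{2\pi i x} - e^{2\pi i y}|,
\]
and the hypothesis $\mathbb{E}|X_1| < 2\mathbb{P}(X_1=1)$ ensures the prefactor is strictly positive. Finally I would invoke the standard estimate $|e^{2\pi i x} - e^{2\pi i y}| = 2|\sin(\pi(x-y))| \ge 4\|x-y\|$ (coming from $\sin(\pi t) \ge 2t$ on $[0,1/2]$ by concavity) to obtain
\[
|\varphi(2\pi x) - \varphi(2\pi y)| \ge \bigl(8\mathbb{P}(X_1=1) - 4\mathbb{E}|X_1|\bigr)\,\|x-y\|,
\]
which is \eqref{secondcond} with $c = 8\mathbb{P}(X_1=1) - 4\mathbb{E}|X_1|$ and $d=1$.

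There is no real obstacle here; the only non-routine ingredient is recognizing that $|\sin(n\theta)| \le |n|\,|\sin\theta|$ lets one bound every Fourier mode uniformly by the $n=1$ mode, so that the proof reduces to comparing a single dominant arc of the unit circle with the harmless perturbation coming from the other mass of $X_1$. The positivity of $c$ is precisely what the quantitative hypothesis $\mathbb{E}|X_1| < 2\mathbb{P}(X_1=1)$ was designed to guarantee.
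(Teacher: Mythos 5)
Your proposal is correct and is essentially the paper's own argument: both isolate the mass at $X_1=1$ (you via the explicit Fourier expansion, the paper via the indicator $I_{\{X_1 \neq 1\}}$), both bound the remaining modes by the same inequality $|e^{2\pi i n x}-e^{2\pi i n y}| \le |n|\,|e^{2\pi i x}-e^{2\pi i y}|$, and both finish with $|e^{2\pi i x}-e^{2\pi i y}| = 2|\sin(\pi(x-y))| \ge 4\|x-y\|$. The only cosmetic difference is that you spell out the proof of the multiplier inequality via $|\sin(n\theta)| \le |n||\sin\theta|$, which the paper takes for granted.
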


\begin{proof} We give a direct proof without using Proposition \ref{secondcondprop}. We have
\[ \begin{split} |\varphi (2 \pi x) - \varphi (2 \pi y)| &= \left| \E \left( e^{2 \pi i X_1 x} - e^{2 \pi i X_1 y} \right) \right| \\ &\ge \P (X_1=1) |e^{2 \pi i x} - e^{2 \pi i y}| - \E \left( |e^{2 \pi i X_1 x} - e^{2 \pi i X_1 y}| I_{\{ X_1 \neq 1\}} \right) . \end{split} \]
Using
\[ |e^{2 \pi i X_1 x} - e^{2 \pi i X_1 y}| \le |X_1| \cdot |e^{2 \pi i x} - e^{2 \pi i y}| \]
and
\[ \E \left( |X_1| I_{\{ X_1 \neq 1 \}} \right) = \E |X_1| - \P (X_1=1) \]
we deduce
\[ |\varphi (2 \pi x) - \varphi (2 \pi y)| \ge \left( 2 \P (X_1=1) - \E |X_1| \right) |e^{2 \pi i x} - e^{2 \pi i y}| . \]
Finally, note that
\[ |e^{2 \pi i x} - e^{2 \pi i y}| = 2 |\sin (\pi (x-y))| \ge 4 \| x-y \| . \]
\end{proof}

\section{A Diophantine sum}

To study the discrepancy of the sequence $\{ S_k \alpha \}$, we will combine the Erd\H{o}s--Tur\'an inequality and our estimates for the high moments of an exponential sum in Proposition \ref{momentestimate}. In order to proceed, it will be necessary to estimate sums of the form
\begin{equation}\label{diophsum}
\sum_{h=1}^H \frac{1}{h \left\| h \alpha \right\|^b}
\end{equation}
where $\alpha$ is a given irrational and $0<b \le 1$. Note that in the proof of Theorem \ref{theoremA}, $b$ will be $\beta/2$ in (i), while $b$ will be $1/2$ in (ii). The behavior of the sum \eqref{diophsum} depends on the Diophantine approximation properties of $\alpha$, i.e.\ on how well $\alpha$ can be approximated by rational numbers with small denominators. These properties are encoded in the continued fraction representation of $\alpha$, therefore it is natural to use the theory of continued fractions to estimate \eqref{diophsum}.

Recall that any irrational $\alpha$ has a unique continued fraction representation
\[ \alpha = [a_0;a_1,a_2, \dots ] = a_0 + \cfrac{1}{a_1 + \cfrac{1}{a_2 + \cdots}} \]
where $a_0$ is an integer and $a_i$ is a positive integer for $i \ge 1$. By truncating the infinite continued fraction we obtain the rational numbers
\[ \frac{p_n}{q_n} = [a_0;a_1,a_2, \dots , a_{n-1}] = a_0 + \cfrac{1}{a_1+\cfrac{1}{a_2 + \cfrac{1}{\cdots + \cfrac{1}{a_{n-1}}}}} , \]
for all $n \ge 1$, called the \textit{convergents} to $\alpha$. Their main relevance is that in a certain sense they are the ``best'' rational approximations of $\alpha$.

The fact that $p_n/q_n$ is ``close'' to $\alpha$ implies that $q_n \alpha$ is ``close'' to an integer (namely $p_n$). This gives us the intuition that the largest terms of the sum \eqref{diophsum} are those for which $h=q_n$ for some $n$. Since $1/(h \left\| h \alpha \right\|^b) \ge 1/h$, the best we can hope for is that the contribution of all other terms is at most a constant times $\log H$. We can turn this intuition into a precise statement:

\begin{prop}\label{generaldioph} Let $\alpha = [a_0;a_1,a_2, \dots]$ be the continued fraction representation of an irrational number $\alpha$, and let $p_n/q_n=[a_0;a_1,a_2, \dots , a_{n-1}]$ denote its convergents. For any $0<b \le 1$,
\[ \sum_{0<h<q_n} \frac{1}{h \left\| h \alpha \right\|^b} = O \left( \log^s q_n + \sum_{0<k<n} \frac{1}{q_k \left\| q_k \alpha \right\|^b} \right) , \]
where $s=1$ if $0<b<1$, and $s=2$ if $b=1$. The implied constant depends only on $\alpha$ and $b$.
\end{prop}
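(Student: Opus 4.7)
The plan is to induct on $n$ and, at each step, isolate the contribution of the new block $\{h : q_{n-1} \le h < q_n\}$. Within the block I would parameterize $h = cq_{n-1} + r$ with $1 \le c \le a_n$ and $0 \le r < q_{n-1}$ (with minor boundary adjustments at $c = a_n$), and use the identity $q_{n-1}\alpha \equiv (-1)^{n-1}\|q_{n-1}\alpha\| \pmod 1$ to write $h\alpha \equiv \psi_r + c(-1)^{n-1}\|q_{n-1}\alpha\| \pmod 1$, where $\psi_r = r\alpha - \operatorname{round}(r\alpha)$ is the signed fractional part.

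The ``resonant'' contribution, from $r = 0$ (i.e., $h = cq_{n-1}$), is immediate: since $a_n\|q_{n-1}\alpha\| \le \|q_{n-2}\alpha\| \le 1/2$, one has $\|cq_{n-1}\alpha\| = c\|q_{n-1}\alpha\|$, so it equals
\[
\sum_{c=1}^{a_n} \frac{1}{cq_{n-1}\cdot (c\|q_{n-1}\alpha\|)^b} = \frac{1}{q_{n-1}\|q_{n-1}\alpha\|^b} \sum_{c=1}^{a_n} \frac{1}{c^{1+b}} = O\!\left(\frac{1}{q_{n-1}\|q_{n-1}\alpha\|^b}\right),
\]
which upon summing over $n$ supplies the main term $\sum_{0 < k < n} 1/(q_k\|q_k\alpha\|^b)$ of the claim.

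The bulk of the work is bounding the nonresonant part (with $r \ge 1$) by $O(\log^s q_n)$ after summation over blocks. I would split the $r$'s by the size of $|\psi_r|$. By the best-approximation property, the $\psi_r$ for $0 \le r < q_{n-1}$ are pairwise separated by at least $\|q_{n-2}\alpha\| \asymp a_n\|q_{n-1}\alpha\|$ on $\R/\mathbb{Z}$, so only $O(1)$ values of $r$ satisfy $|\psi_r| \le 2a_n\|q_{n-1}\alpha\|$ (``near-resonant''); for each such $r$, the arithmetic-progression structure of $\|h\alpha\|$ in $c$ together with the best-approximation lower bound $\|h\alpha\| \ge \|q_{n-1}\alpha\|$ gives a per-block contribution $O(1)$ for $0 < b < 1$ and $O(\log a_n)$ for $b = 1$. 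For a ``far-from-resonant'' $r$ ($|\psi_r| > 2a_n\|q_{n-1}\alpha\|$), the shift $c(-1)^{n-1}\|q_{n-1}\alpha\|$ cannot halve $|\psi_r|$, so $|\psi_r + c(-1)^{n-1}\|q_{n-1}\alpha\|| \ge |\psi_r|/2$ uniformly in $c$, and
\[
\sum_{c=1}^{a_n} \frac{1}{(cq_{n-1}+r) |\psi_r + c(-1)^{n-1}\|q_{n-1}\alpha\||^b} = O\!\left(\frac{\log a_n}{q_{n-1}|\psi_r|^b}\right).
\]
A dyadic estimate on $|\psi_r|$ using the three-distance separation yields $\sum_r 1/|\psi_r|^b = O(q_{n-1})$ for $0 < b < 1$ and $O(q_{n-1}\log q_{n-1})$ for $b = 1$ --- this is precisely where the dichotomy $s = 1$ versus $s = 2$ arises. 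Summing the per-block bounds over $n$ and using $\sum_{k<n} \log a_{k+1} = O(\log q_n)$ produces the claimed error $O(\log^s q_n)$.

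The main obstacle is the careful accounting of the nonresonant contributions. A naive estimate that bounds $1/h \le 1/q_{n-1}$ across the whole block and uses the crude count $|\{h \in [q_{n-1}, q_n) : \|h\alpha\| \le t\}| \le 2t/\|q_{n-1}\alpha\| + O(1)$ in a dyadic decomposition of $\sum_{h \in [q_{n-1}, q_n)} 1/\|h\alpha\|^b$ produces only $O(1/(q_{n-1}\|q_{n-1}\alpha\|))$ per block; for $0 < b < 1$ this carries a parasitic factor $\|q_{n-1}\alpha\|^{b-1} \ge 1$ and destroys the sharp exponent $b$ in the main term, while for $b = 1$ summation over blocks would grossly exceed $\log^2 q_n + \sum 1/(q_k\|q_k\alpha\|)$. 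The remedy is to isolate the resonant multiples $h = cq_{n-1}$ (where $\|h\alpha\|$ is genuinely as small as $c\|q_{n-1}\alpha\|$) and, for the rest, to combine the block-level averaging $\sum_{h \in [q_{n-1}, q_n)} 1/h = O(\log a_n)$ with the three-distance spacing of the $\psi_r$'s. The near-/far-from-resonance split on $|\psi_r|$ is what allows both the correct main term and the correct power of logarithm to emerge in a single inductive step.
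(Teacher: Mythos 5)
Your route is essentially the paper's own: it too works block by block on $[q_k,q_{k+1})$ and splits each block into the multiples of $q_k$ (your resonant $r=0$, producing the main term), the single residue class $h\equiv q_{k-1}\ (\mathrm{mod}\ q_k)$ adjacent to the resonance (contained in your near-resonant $r$'s), and the generic residues (your far-from-resonant $r$'s), where the paper uses $\|h\alpha\|\ge\frac12\|hp_k/q_k\|$ and the fact that $hp_k$ sweeps all nonzero residues mod $q_k$ --- the same spacing information you extract from the three-distance separation of the $\psi_r$'s --- and then sums $\log a_k$ over blocks exactly as you do. The one claim that is false as stated is your near-resonant per-block bound for $0<b<1$: for $r=q_{n-2}$ the sum over $c$ is of order $\frac{1}{q_{n-1}\|q_{n-1}\alpha\|^b}\cdot\frac{\log a}{a^b}\asymp q_{n-1}^{b-1}\log a$ (with $a$ the relevant partial quotient), which is not $O(1)$ when $a$ is huge compared with $q_{n-1}$ (take $b=1/2$, $q_{n-1}$ bounded, $a\to\infty$). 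This does not harm the argument: since $\frac{\log a}{a^b}=O_b(1)$, that contribution is always $O\bigl(\frac{1}{q_{n-1}\|q_{n-1}\alpha\|^b}\bigr)$ and can simply be absorbed into the main term (exactly how the paper handles its exceptional class), or alternatively one notes $\sum_k q_k^{b-1}\log a_k=O(\log q_n)$ because $\sum_k q_k^{b-1}$ converges; for $b=1$ your $O(\log a_n)$ is correct. With that repair, plus the routine step of starting the block decomposition at a fixed index (so that $\|q_{k-1}\alpha\|$ is small and no wrap-around mod $1$ interferes with reading off $\|h\alpha\|$ from $\psi_r+c(\pm\|q_{n-1}\alpha\|)$), your proof is complete and yields the $s=1$ versus $s=2$ dichotomy from the same source as the paper.
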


To prove Proposition \ref{generaldioph} we need certain facts from the theory of continued fractions. For a proof see e.g.\ \cite{JWC}.

\begin{prop}\label{facts} The convergents $p_n/q_n=[a_0;a_1,a_2, \dots , a_{n-1}]$ of an arbitrary irrational number $\alpha=[a_0;a_1,a_2, \dots]$ satisfy the following:
\begin{enumerate}
\item[(i)] For any $n \ge 2$ we have $\frac{1}{q_{n+1}+q_n} \le \left\| q_n \alpha \right\| = |q_n \alpha - p_n| \le \frac{1}{q_{n+1}}$.
\item[(ii)] For any $n \ge 1$ we have $q_n \alpha - p_n = (-1)^{n+1} |q_n \alpha - p_n|$.
\item[(iii)] The denominators of the convergents satisfy the recurrence $q_{n+1}=a_n q_n + q_{n-1}$ with initial conditions $q_1=1$, $q_2=a_1$.
\item[(iv)] For any $n \ge 2$ we have $p_n q_{n-1}-q_n p_{n-1} = (-1)^n$. In particular, $p_n$ and $q_n$ are relatively prime.
\end{enumerate}
\end{prop}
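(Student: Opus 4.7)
My plan is to prove the four parts in the order (iii), (iv), (ii), (i), the central tool being the parametrized truncation formula
\[ [a_0; a_1, \ldots, a_{n-1}, t] = \frac{tp_n + p_{n-1}}{tq_n + q_{n-1}} \quad (t>0). \]
To make this meaningful for all $n \ge 1$ I adopt the conventions $p_0 = 1$, $q_0 = 0$, which together with $p_1 = a_0$, $q_1 = 1$ are consistent with the stated initial conditions. I would first define the candidate sequences by the recurrences $p_{n+1} = a_n p_n + p_{n-1}$ and $q_{n+1} = a_n q_n + q_{n-1}$, and prove the displayed formula by induction on $n$, using the elementary observation $[a_0; \ldots, a_{n-2}, a_{n-1} + 1/t] = [a_0; \ldots, a_{n-1}, t]$ to reduce the $n$-case to the $(n-1)$-case. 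Specializing $t = a_n$ then shows that the so-defined $p_n, q_n$ give precisely the truncated continued fraction $[a_0; \ldots, a_{n-1}]$, so that (iii) is simultaneously verified.

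Part (iv) now follows from a one-line induction. From the recurrence in (iii),
\[ p_{n+1}q_n - q_{n+1}p_n = (a_n p_n + p_{n-1})q_n - (a_n q_n + q_{n-1})p_n = -(p_n q_{n-1} - q_n p_{n-1}), \]
and the base case $p_1 q_0 - q_1 p_0 = a_0 \cdot 0 - 1 \cdot 1 = -1 = (-1)^1$ starts the alternating pattern. Coprimality of $p_n$ and $q_n$ is immediate since any common divisor must divide $(-1)^n$.

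For (i) and (ii) I would introduce the complete quotient $\xi_n := [a_n; a_{n+1}, \ldots]$, which is a well-defined irrational satisfying $a_n < \xi_n < a_n + 1$ since the tail continued fraction is infinite. Applying the parametrized truncation formula with $t = \xi_n$ gives $\alpha = (\xi_n p_n + p_{n-1})/(\xi_n q_n + q_{n-1})$. Cross-multiplying and invoking (iv) yields the clean identity
\[ q_n \alpha - p_n = \frac{(-1)^{n+1}}{\xi_n q_n + q_{n-1}}, \]
which is (ii) immediately. Taking absolute values, the denominator lies strictly between $a_n q_n + q_{n-1} = q_{n+1}$ and $(a_n+1)q_n + q_{n-1} = q_{n+1} + q_n$, producing the two-sided bound in (i). Finally, for $n \ge 2$ one has $q_{n+1} \ge 2$, hence $|q_n\alpha - p_n| < 1/2$, which is precisely what is needed to identify $|q_n \alpha - p_n|$ with $\|q_n\alpha\|$.

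The main obstacle is nothing deep but rather bookkeeping: the paper indexes convergents from $1$ rather than the more common $0$, so the phantom values $p_0 = 1$, $q_0 = 0$ must be introduced carefully to keep both the recurrence and the truncation formula valid at the very first step. Once the truncation formula is established, every other claim reduces to a short algebraic manipulation, so the entire proof amounts essentially to a single induction plus the substitution $t = \xi_n$.
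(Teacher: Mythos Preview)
Your argument is correct and follows the standard textbook route (parametrized truncation formula, induction for the recurrences and the determinant identity, then substitution of the complete quotient $\xi_n$). Note that the paper does not actually prove this proposition: it simply states these classical facts and refers the reader to Cassels' monograph, so there is no proof in the paper to compare against. Your write-up is precisely the kind of self-contained verification one finds in that reference, and your handling of the index shift (introducing $p_0=1$, $q_0=0$ to accommodate the paper's convention of starting at $n=1$) is the right way to reconcile the bookkeeping.
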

\qed

\begin{proof}[Proof of Proposition \ref{generaldioph}] Let $k \ge 3$, and consider the sum
\begin{equation}\label{qkqk+1}
\sum_{q_k \le h < q_{k+1}} \frac{1}{h \left\| h \alpha \right\|^b}.
\end{equation}
Let $\e_k = q_k \alpha - p_k$. Note that $\left\| h \alpha \right\| = \left\| hp_k/q_k + h \e_k/q_k \right\|$. Here $hp_k/q_k$ is an integer multiple of $1/q_k$, and $\left| h \e_k/q_k \right| < q_{k+1}|\e_k|/q_k \le 1/q_k$ for any $q_k \le h < q_{k+1}$. The assumption $k \ge 3$ ensures $q_k \ge 2$. Hence $\left\| h \alpha \right\|$ is basically determined by the residue class of $hp_k$ modulo $q_k$. Since $\textrm{sign } \e_k = (-1)^{k+1}$, the residue classes $0$ and $(-1)^k$ will require special treatment. It is thus natural to decompose the sum \eqref{qkqk+1} using the index sets
\[ \begin{split} A &= \left\{ q_k \le h < q_{k+1} \,\, : \,\, h p_k \equiv 0 \pmod{q_k} \right\} ,\\
B &= \left\{ q_k \le h < q_{k+1} \,\, : \,\, h p_k \equiv (-1)^k \pmod{q_k} \right\} ,\\
C &= \left\{ q_k \le h < q_{k+1} \,\, : \,\, h p_k \not\equiv 0, (-1)^k \pmod{q_k} \right\} . \end{split} \]

First, consider the sum over $h \in A$. Since $p_k$ and $q_k$ are relatively prime, $A$ only contains integral multiples of $q_k$. For any $h=aq_k \in A$, $a \ge 1$ we thus have
\[ \left\| h \alpha \right\| = \left\| \frac{0}{q_k} + \frac{a q_k \e_k}{q_k} \right\| = a |\e_k| = a \left\| q_k \alpha \right\| , \]
and therefore
\begin{equation}\label{sumoverA}
\sum_{h \in A} \frac{1}{h \left\| h \alpha \right\|^b} \le \sum_{a=1}^{\infty} \frac{1}{a q_k \left( a \left\| q_k \alpha \right\| \right)^b} = O \left( \frac{1}{q_k \left\| q_k \alpha \right\|^b} \right) .
\end{equation}

Next, let us estimate the sum over $h \in B$. By taking the equation $p_k q_{k-1} - q_k p_{k-1}=(-1)^k$ from Proposition \ref{facts} (iv) modulo $q_k$, we find that the multiplicative inverse of $p_k$ modulo $q_k$ is $(-1)^kq_{k-1}$, hence every element of $B$ is congruent to $q_{k-1}$ modulo $q_k$. In fact $B=\{ aq_k + q_{k-1} \, : \, 1 \le a \le a_{k}-1 \}$, since $a_k q_k+q_{k-1}=q_{k+1}$ is outside the interval $q_k \le h<q_{k+1}$. From Proposition \ref{facts} (i, iii) we deduce that $a_k q_k |\e_k| \le 1-q_{k-1}|\e_k|$. For any $h=aq_k+q_{k-1} \in B$ we thus have
\[ \left\| h \alpha \right\| = \left\| \frac{(-1)^k}{q_k} + \frac{h \e_k}{q_k} \right\| = \frac{1-(aq_k+q_{k-1})|\e_k|}{q_k} \ge (a_k-a)|\e_k| . \]
Therefore
\begin{equation}\label{sumoverB}
\sum_{h \in B} \frac{1}{h \left\| h \alpha \right\|^b} \le \sum_{a=1}^{a_k-1} \frac{1}{a q_{k} \left( (a_k-a) \left\| q_k \alpha \right\| \right)^b} = O \left( \frac{1}{q_k \left\| q_k \alpha \right\|^b} \right) .
\end{equation}

Finally, we need to estimate the sum over $h \in C$. The congruence conditions in the definition of $C$ imply that for any $h \in C$,
\[ \left\| h \alpha \right\| = \left\| \frac{hp_k}{q_k} + \frac{h \e_k}{q_k} \right\| \ge \frac{1}{2} \left\| \frac{hp_k}{q_k} \right\| . \]
For any integer $a \ge 1$ we therefore have
\begin{equation}\label{aqka+1qk}
\sum_{\substack{aq_k \le h < (a+1)q_k \\ h \in C}} \frac{1}{h \left\| h \alpha \right\|^b} \le \sum_{aq_k < h < (a+1)q_k} \frac{2^b}{aq_k \left\| hp_k/q_k \right\|^b} .
\end{equation}
Since $p_k$ and $q_k$ are relatively prime, as $h$ runs in the interval $aq_k < h < (a+1)q_k$, the numbers $h p_k$ attain each nonzero residue class modulo $q_k$ exactly once. Considering the cases $0<b<1$ and $b=1$ separately, we find that the right hand side of \eqref{aqka+1qk} can hence be estimated as
\[  \frac{2^b}{a q_k} \sum_{j=1}^{q_k-1} \frac{1}{\left\| j/q_k \right\|^b}  \le \frac{2 \cdot 2^b}{a q_k} \sum_{1 \le j \le q_k/2} \frac{1}{\left( j/q_k \right)^b} = O \left( \frac{\log^{s-1} q_k}{a} \right) . \]
Summing over $1 \le a \le a_k$ we obtain
\begin{equation}\label{sumoverC}
\sum_{h \in C} \frac{1}{h \left\| h \alpha \right\|^b} = O \left( \log^{s-1} q_k \log a_k \right) .
\end{equation}

Adding \eqref{sumoverA}, \eqref{sumoverB} and \eqref{sumoverC} we get
\[ \sum_{q_k \le h < q_{k+1}} \frac{1}{h \left\| h \alpha \right\|^b} = O \left( \log^{s-1} q_k \log a_k + \frac{1}{q_k \left\| q_k \alpha \right\|^b} \right) . \]
Summing over $3 \le k \le n-1$ we obtain
\begin{align}\label{0hqn}
&\sum_{0<h<q_n} \frac{1}{h \left\| h \alpha \right\|^b} \\
&\phantom{9999} = \sum_{0<h<q_3} \frac{1}{h \left\| h \alpha \right\|^b} + O \left( \sum_{k=3}^{n-1} \left( \log^{s-1} q_k \log a_k + \frac{1}{q_k \left\| q_k \alpha \right\|^b} \right) \right) \nonumber.
\end{align}
Here the sum over $0<h<q_3$ is $O(1)$, because $q_3$ is a constant depending only on $\alpha$. The recurrence in Proposition \ref{facts} (iii) shows $q_{n} \ge a_{n-1} q_{n-1}$, and iterating this inequality we get
\[ q_n \ge a_{n-1} a_{n-2} \cdots a_3 q_3 . \]
Hence $\sum_{k=3}^{n-1} \log^{s-1} q_k \log a_k = O (\log^s q_n)$, and so \eqref{0hqn} simplifies to
\[ \sum_{0<h<q_n} \frac{1}{h \left\| h \alpha \right\|^b} = O \left( \log^s q_n + \sum_{0<k<n} \frac{1}{q_k \left\| q_k \alpha \right\|^b} \right) . \]
\end{proof}

\begin{cor}\label{gammadioph} Let $\alpha$ be irrational and $0<b \le 1$. Suppose there exist constants $\gamma \ge 1$ and $C>0$ such that $\left\| q \alpha \right\| \ge C q^{- \gamma}$ for every $q \in \mathbb{N}$. Then
\[ \sum_{h=1}^H \frac{1}{h \left\| h \alpha \right\|^b} = \left\{ \begin{array}{ll} O \left( \log^s H \right) & \textrm{if } \gamma \le 1/b, \\ O \left( H^{b \gamma -1} \right) & \textrm{if } \gamma > 1/b, \end{array} \right. \]
where $s=1$ if $0<b<1$, and $s=2$ if $b=1$. The implied constants depend only on $\alpha$, $b$ and $\gamma$.
\end{cor}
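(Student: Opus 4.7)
The plan is to reduce Corollary \ref{gammadioph} to Proposition \ref{generaldioph}. Given $H$, choose the integer $n$ such that $q_n \le H < q_{n+1}$. Enlarging the range of summation and invoking Proposition \ref{generaldioph} gives
\[ \sum_{h=1}^H \frac{1}{h \|h\alpha\|^b} \le \sum_{0<h<q_{n+1}} \frac{1}{h \|h\alpha\|^b} = O\left(\log^s q_{n+1} + \sum_{k=1}^n \frac{1}{q_k \|q_k\alpha\|^b}\right). \]
From Proposition \ref{facts}(i) we have $\|q_n \alpha\| \le 1/q_{n+1}$, and the Diophantine hypothesis gives $\|q_n\alpha\| \ge C q_n^{-\gamma}$. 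Combining these yields $q_{n+1} \le C^{-1} q_n^\gamma \le C^{-1} H^\gamma$, so $\log q_{n+1} = O(\log H)$ and hence $\log^s q_{n+1} = O(\log^s H)$.

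Next I apply the Diophantine condition termwise to bound
\[ \sum_{k=1}^n \frac{1}{q_k \|q_k\alpha\|^b} \le C^{-b} \sum_{k=1}^n q_k^{b\gamma-1}. \]
The recurrence of Proposition \ref{facts}(iii) forces $q_{k+1} \ge q_k + q_{k-1}$, so the denominators of the convergents grow at least as fast as Fibonacci numbers: $q_k \ge c \lambda^{k}$ with $\lambda = (1+\sqrt{5})/2$. This geometric growth is the decisive fact, and it leads to three subcases. If $b\gamma<1$, the sum is a convergent geometric series of size $O(1)$. If $b\gamma=1$, the sum is $O(n)=O(\log q_n)=O(\log H)$. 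If $b\gamma>1$, the sum is dominated by its last term, giving $O(q_n^{b\gamma-1})=O(H^{b\gamma-1})$.

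Putting these together with the main term proves the corollary: for $\gamma<1/b$ both contributions are $O(\log^s H)$; for $\gamma=1/b$ the extra factor $n$ is absorbed, and this is exactly where one needs $s=2$ rather than $s=1$ at the boundary $b=1$, $\gamma=1$; and for $\gamma>1/b$ the polynomial term $H^{b\gamma-1}$ dominates the polylogarithmic main term. The main obstacle is really just verifying that the critical case $b\gamma=1$ does not degrade the polylogarithmic bound; once the correct exponent $s$ is used, everything reduces to routine bookkeeping on top of Proposition \ref{generaldioph}.
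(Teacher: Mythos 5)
Your proof is correct and follows essentially the same route as the paper: reduce to Proposition \ref{generaldioph}, use $\left\| q_n \alpha \right\| \ge Cq_n^{-\gamma}$ together with $\left\| q_n \alpha \right\| \le 1/q_{n+1}$ to get $\log^s q_{n+1} = O(\log^s H)$, and control $\sum_{k\le n} q_k^{b\gamma-1}$ via the growth of the convergent denominators. One small precision: in the case $b\gamma>1$, "dominated by its last term" requires the \emph{relative} growth $q_{k+2} \ge 2q_k$ (so $q_k \le 2^{-\lfloor (n-k)/2 \rfloor} q_n$), not merely the absolute Fibonacci bound $q_k \ge c\lambda^k$ (which alone would leave an extra factor $n$); this follows from the same recurrence you cite and is what the paper encodes by noting each dyadic block contains at most two denominators $q_k$.
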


\begin{proof} Let $p_n/q_n$ denote the convergents to $\alpha$. Consider the two consecutive convergent denominators such that $q_{n-1} \le H < q_n$. Proposition \ref{generaldioph} implies
\begin{equation}\label{generaldiophest}
\sum_{h=1}^H \frac{1}{h \left\| h \alpha \right\|^b} = O \left( \log^s q_n + \sum_{0<k<n} \frac{1}{q_k \left\| q_k \alpha \right\|^b} \right) .
\end{equation}
Proposition \ref{facts} (i) shows that $C q_{n-1}^{- \gamma} \le \left\| q_{n-1} \alpha \right\| \le 1/q_n$. Rearranging we get $q_n \le q_{n-1}^{\gamma}/C \le H^{\gamma}/C$. Therefore the first error term in \eqref{generaldiophest} satisfies $\log^s q_n = O \left( \log^s H \right)$.

In the second error term in \eqref{generaldiophest} we have
\[ \frac{1}{q_k \left\| q_k \alpha \right\|^b} \le C^{-b} q_k^{b \gamma -1} = O \left( q_k^{b \gamma -1} \right) . \]
If $\gamma \le 1/b$, then
\[ \sum_{0<k<n} \frac{1}{q_k \left\| q_k \alpha \right\|^b} = O \left( \sum_{0<k<n} q_k^{b \gamma -1} \right) = O \left( n \right) . \]
The recurrence in Proposition \ref{facts} (iii) shows that $q_n$ is at least as large as the $n$th Fibonacci number, therefore $n=O (\log q_{n-1}) = O \left( \log H \right)$. Hence \eqref{generaldiophest} simplifies to
\[ \sum_{h=1}^H \frac{1}{h \left\| h \alpha \right\|^b} = O \left( \log^s H + \log H \right) = O \left( \log^s H \right) . \]

Finally, assume $\gamma >1/b$. Proposition \ref{facts} (iii) shows that $q_{k+2} \ge q_{k+1}+q_k \ge 2 q_k$. In particular, any interval of the form $\left[ 2^{\ell},2^{\ell+1}\right)$ contains at most two convergent denominators. Hence
\[ \begin{split} \sum_{0<k<n} \frac{1}{q_k \left\| q_k \alpha \right\|^b} &= O \left( \sum_{0<k<n} q_k^{b \gamma -1} \right) = O \left( \sum_{\substack{\ell \\ 2^{\ell} \le q_{n-1}}} \sum_{2^{\ell} \le q_k < 2^{\ell +1}} q_k^{b \gamma -1} \right) \\ &= O \left( \sum_{\substack{\ell \\ 2^{\ell} \le H}} 2^{(\ell +1) (b \gamma -1)} \right) = O \left( H^{b \gamma -1} \right) . \end{split} \]
Thus in this case \eqref{generaldiophest} gives
\[ \sum_{h=1}^H \frac{1}{h \left\| h \alpha \right\|^b} = O \left( \log^s H + H^{b \gamma -1} \right) = O \left( H^{b \gamma -1} \right) . \]
\end{proof}

\section{Proof of the upper bounds}

In what follows, $K$ will denote positive constants, not always the same, depending (at most) on $\alpha$ and the distribution of $X_1$.
We first show
\begin{lem}\label{lem:1} Let $X_1, X_2, \ldots$ and $\alpha$ be as in Theorem \ref{theoremA} and assume (\ref{firstcond}).
Then for any integers $\ell \ge  0$ and $p\ge 1$ we have
\begin{multline}\label{5}
\left\| \max_{2^{\ell}\le N\le 2^{\ell +1}} ND_N(\{S_k\alpha\})\right\|_{2p} \\ \le K \left( 2^{\ell (1-\delta)}p^\delta +2^{\ell /2} \sqrt{p} \sum_{h=1}^{[K 2^{(\ell +1) \delta}/p^\delta]} \frac{1}{h\|dh\alpha\|^{\beta/2}} \right)
\end{multline}
where $\delta=1/(\beta\gamma)$. If instead of (\ref{firstcond}) we assume (\ref{secondcond}), then (\ref{5}) holds with $\beta=1$.
\end{lem}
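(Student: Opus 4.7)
The plan is to combine the Erd\H{o}s--Tur\'an inequality \eqref{ErdosTuran}, the high-moment bound from Proposition~\ref{momentestimate}, and a dyadic chaining argument for the supremum over $N \in [2^\ell, 2^{\ell+1}]$. Set $T_N^{(h)} := \sum_{k=1}^N e^{2\pi i h S_k \alpha}$. With a cutoff $H$ to be chosen later, Erd\H{o}s--Tur\'an yields
\[
\max_{2^\ell \le N \le 2^{\ell+1}} N D_N(\{S_k \alpha\}) \le C \frac{2^{\ell+1}}{H} + C \sum_{h=1}^H \frac{1}{h} \max_{2^\ell \le N \le 2^{\ell+1}} |T_N^{(h)}| .
\]
Taking the $L^{2p}$ norm and using Minkowski, the task reduces to estimating $\|\max_N |T_N^{(h)}|\|_{2p}$ for each $h$ and then tuning $H$ to balance the two resulting terms.

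For the single-window bound I would apply Proposition~\ref{momentestimate}(i) with $h\alpha$ in place of $\alpha$; the constants $c,d$ in \eqref{firstcond} are independent of $h$, so this substitution is legitimate. Writing $t := c \|dh\alpha\|^\beta$, Proposition~\ref{momentestimate}(i) reads
\[
\left\| T_{m+n}^{(h)} - T_m^{(h)} \right\|_{2p}^{2p} \le (8p)^{2p} t^{-2p} \max_{1 \le r \le p} \frac{(nt)^r}{r!} .
\]
The map $r \mapsto (nt)^r/r!$ is unimodal with mode near $nt$, so once $nt \ge p$ the maximum over $1 \le r \le p$ is attained at $r = p$; combined with $(p!)^{1/(2p)} \ge (p/e)^{1/2}$ this gives
\[
\left\| T_{m+n}^{(h)} - T_m^{(h)} \right\|_{2p} \le K \frac{\sqrt{pn}}{\|dh\alpha\|^{\beta/2}} \qquad (nt \ge p) .
\]
The Diophantine hypothesis yields $\|dh\alpha\| \ge K h^{-\gamma}$, hence $nt \ge K n / h^{\beta\gamma}$, so for $n \asymp 2^\ell$ the condition $nt \ge p$ holds whenever $h \lesssim (2^\ell/p)^{1/(\beta\gamma)} = (2^\ell/p)^{\delta}$. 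This dictates $H = [K \cdot 2^{(\ell+1)\delta}/p^\delta]$, with the constant chosen small enough to keep every $1 \le h \le H$ inside the $r = p$ regime for every dyadic sub-window of $[2^\ell, 2^{\ell+1}]$.

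To pass from the single-window bound to $\|\max_N |T_N^{(h)}|\|_{2p}$ I would use Rademacher--Menshov style dyadic telescoping along the binary expansion of $N - 2^\ell$:
\[
\left| T_N^{(h)} - T_{2^\ell}^{(h)} \right| \le \sum_{i=0}^{\ell-1} \max_a \left| T_{(a+1) 2^i}^{(h)} - T_{a \cdot 2^i}^{(h)} \right| ,
\]
with $O(2^{\ell-i})$ aligned dyadic blocks at each level $i$. Combining the elementary inequality $\|\max_a Y_a\|_{2p} \le \bigl( \sum_a \|Y_a\|_{2p}^{2p} \bigr)^{1/(2p)}$ with Minkowski and the single-block estimate, one arrives at a geometric-type sum $\sum_i 2^{(\ell-i)/(2p) + i/2}$, whose exponent is increasing in $i$ and is therefore dominated by its last term of size $\asymp 2^{\ell/2}$. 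Together with $\|T_{2^\ell}^{(h)}\|_{2p}$ this gives $\|\max_N |T_N^{(h)}|\|_{2p} \le K \sqrt{p \cdot 2^\ell}/\|dh\alpha\|^{\beta/2}$; substituting back and using $2^{\ell+1}/H \asymp 2^{\ell(1-\delta)} p^\delta$ produces \eqref{5}. The case under \eqref{secondcond} is identical, using Proposition~\ref{momentestimate}(ii), whose $r = p$ contribution yields the same single-block bound with $\beta$ replaced by $1$.

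The principal obstacle is step two: certifying that one is inside the \emph{$r = p$ regime} of Proposition~\ref{momentestimate} uniformly in $1 \le h \le H$. Outside that regime the clean $\sqrt{pn}/\|dh\alpha\|^{\beta/2}$ scaling degrades, and any looser substitute would spoil either the main term $2^{\ell(1-\delta)} p^\delta$ or the tail of the Diophantine sum. This is precisely what couples the optimal cutoff $H \asymp (2^\ell/p)^\delta$ to the Diophantine exponent $\gamma$ through $\delta = 1/(\beta\gamma)$, and is what makes the interplay between the distribution of $X_1$ and the Diophantine properties of $\alpha$ visible in \eqref{5}.
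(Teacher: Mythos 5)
Your route is structurally the same as the paper's: Erd\H{o}s--Tur\'an with the cutoff $H\asymp(2^{\ell}/p)^{\delta}$, Proposition~\ref{momentestimate} applied to $h\alpha$ (legitimate, since $c,d$ in \eqref{firstcond} do not depend on the argument), the observation that the maximum over $r$ sits at $r=p$ once $nc\|dh\alpha\|^{\beta}\ge p$, and then a maximal inequality over $N\in[2^{\ell},2^{\ell+1}]$; the only genuine difference is that where the paper simply invokes the Erd\H{o}s--Stechkin/M\'oricz inequality \cite{MO} for the shifted sums, you run an explicit Rademacher--Menshov chaining. The substitution is fine in principle, but your justification of the per-block input is not: you cannot choose the constant in $H$ ``small enough to keep every $1\le h\le H$ inside the $r=p$ regime for every dyadic sub-window.'' A block of length $2^{i}$ lies in that regime only when $2^{i}c\|dh\alpha\|^{\beta}\ge p$, i.e.\ only for $h\lesssim(2^{i}/p)^{\delta}$; already $i=0$ would force $c\|dh\alpha\|^{\beta}\ge p$, which is impossible for large $p$, and for $h$ near $H$ the condition fails at every level $i$ noticeably below $\ell$. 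So, as written, the chaining uses the bound $K\sqrt{p\,2^{i}}\,\|dh\alpha\|^{-\beta/2}$ at scales where your derivation of it breaks down.

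The gap is repairable by one line, because the single-block bound is true at every scale for an elementary reason: if $2^{i}c\|dh\alpha\|^{\beta}<p$, the trivial estimate $|T|\le 2^{i}$ gives $\|T\|_{2p}\le 2^{i}\le\sqrt{2^{i}\,p/(c\|dh\alpha\|^{\beta})}=K\sqrt{p\,2^{i}}\,\|dh\alpha\|^{-\beta/2}$, so the clean estimate holds for all block lengths and all $h$, and your chaining then goes through (this same remark is what makes the paper's appeal to \cite{MO} for shifted sums of arbitrary length legitimate). Two further caveats. First, your maximal step $\|\max_a Y_a\|_{2p}\le(\sum_a\|Y_a\|_{2p}^{2p})^{1/(2p)}$ yields at level $i$ a factor $2^{(\ell-i)/(2p)+i/2}$, and the sum over $i$ is geometric only for $p\ge2$; at $p=1$ all $\ell+1$ terms are equal and you lose an extra factor $\ell$, so the lemma as stated (all $p\ge1$, with $K$ independent of $\ell$) is not recovered at $p=1$ by your argument --- harmless for the application, where $p\sim\log\ell$, and the paper's use of the Erd\H{o}s--Stechkin inequality is similarly delicate at $p=1$, but worth flagging. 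Second, in the \eqref{secondcond} case you should note explicitly that the sum over $0\le r\le p$ in \eqref{mom(ii)} is at most $(p+1)$ times its $r=p$ term in the regime $nc\|dh\alpha\|\ge p$, and $(p+1)^{1/(2p)}=O(1)$, which is exactly how the paper gets \eqref{mom1} with $\beta=1$.
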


\begin{proof} Assume first (\ref{firstcond}). Then
by Proposition \ref{momentestimate} (i), for any integers $m\ge 0$ and $n,h,p\ge 1$ we have
\begin{equation}\label{1}
\mathbb E \left|\sum_{k=m+1}^{m+n} e^{2\pi i S_k h\alpha}\right|^{2p} \le (8p)^{2p} \max_{1\le r\le p} \frac{n^r}{r! \left( c\|d h \alpha\|^\beta\right)^{2p-r}}.
\end{equation}
Let
\begin{equation}\label{H}
H_n= C^{1/\gamma}d^{-1}(cn/p)^{1/(\beta\gamma)}.
\end{equation}
We claim that for any $1\le h\le H_n$ and $0\le r<p$ we have
\begin{equation}\label{incr}
\frac {n^r}{r! \left(c\|dh\alpha\|^\beta\right)^{2p-r}}\le \frac {n^{r+1}}{(r+1)! \left(c\|dh\alpha\|^\beta\right)^{2p-r-1}}.
\end{equation}
To see this, we note that (\ref{incr}) is equivalent to $r+1\le n c \|dh\alpha\|^\beta$ and for $1\le h \le H_n$ and $0\le r<p$ we know by (\ref{H}) and the assumptions of Theorem \ref{theoremA} that
$$ \|dh\alpha\|^\beta \ge C^\beta (dh)^{-\beta\gamma} \ge C^\beta (dH_n)^{-\beta\gamma} =p/(cn) \ge (r+1)/(cn).$$
Thus the maximum in (\ref{1}) is reached for $r=p$ and consequently
\begin{equation}\label{mom1}
\left\| \sum_{k=m+1}^{m+n} e^{2\pi i S_k h\alpha}\right\|_{2p} \le K \sqrt{np} \frac{1}{\|dh\alpha\|^{\beta/2}}
\end{equation}
for all $m\ge 0$, $n,p \ge 1$ and $1\le h\le H_n$.  Set now
$$D_N(\alpha)=D_N(\{S_k\alpha\}), \quad T_h(N, \alpha)=\sum_{k=1}^N e^{2\pi i h S_k \alpha}.$$
By the Erd\H{o}s--Tur\'an inequality we
have
$$
ND_N(\alpha)\le 6\left(\frac{N}{[H_N]} + \sum_{h=1}^{[H_N]} \frac{1}{h} |T_h(N, \alpha)|\right)
$$
and consequently
\begin{equation}\label{6}
\max_{2^{\ell}\le N\le 2^{\ell +1}}ND_N(\alpha) \le K\left( 2^{\ell (1-\delta)}p^\delta + \sum_{h=1}^{[K 2^{(\ell +1)\delta}/p^\delta]} \frac{1}{h}\max_{2^{\ell} \le N\le 2^{\ell +1}} |T_h(N, \alpha)| \right).
\end{equation}
(Note that $N/[H_N]\sim Kp^\delta N^{1-\delta}$ and thus its maximum for $2^{\ell}\le N\le 2^{\ell+1}$ is $\le K 2^{\ell(1-\delta)}p^\delta$.)
By (\ref{mom1})
\begin{equation}\label{7}
\|T_h(N, \alpha)\|_{2p} \le   K \sqrt{Np} \frac{1}{ \|d h\alpha\|^{\beta/2}}.
\end{equation}
Since this remains valid for shifted sums $T_h(N, M, \alpha)=\sum_{k=M+1}^{M+N} e^{2\pi ihS_k \alpha}$ as well, the
Erd\H{o}s--Stechkin inequality \cite{MO} yields
\begin{equation*}
\left\|\max_{2^{\ell}\le N\le 2^{\ell +1}}T_h(N, \alpha)\right\|_{2p} \le  K 2^{\ell /2} \sqrt{p} \frac{1}{\|dh\alpha\|^{\beta/2}} .
\end{equation*}
Substituting this in (\ref{6}) it follows that
$$\left\| \max_{2^{\ell}\le N\le 2^{\ell +1}} ND_N(\alpha)\right\|_{2p} \le K \left(2^{\ell (1-\delta)}p^\delta +2^{\ell/2} \sqrt{p} \sum_{h=1}^{[K 2^{(\ell+1)\delta}/p^\delta]} \frac{1}{h\|dh\alpha\|^{\beta/2}}\right),$$
and thus (\ref{5}) is proved under condition (\ref{firstcond}) in  Theorem \ref{theoremA}.

If instead of (\ref{firstcond}) we assume (\ref{secondcond}), the proof of (\ref{5}) is essentially the same. In this case in Proposition \ref{momentestimate} we have (\ref{mom(ii)}) instead of (\ref{mom(i)}), which implies, in view of the monotonicity relation (\ref{incr}), that (\ref{mom1}) remains valid with $\beta=1$ and a different constant $K$. The rest of the proof of (\ref{5}) requires no change.
\end{proof}

\begin{proof}[Proof of Theorem \ref{theoremA}]
Assume first that (\ref{firstcond}) holds. We will deal with the cases $\gamma>2/\beta$ and $1 \le \gamma\le 2/\beta$ separately.

Assume $\gamma>2/\beta$. Then $\delta<1/2$ and by Lemma \ref{lem:1} and Corollary \ref{gammadioph},
\[ \begin{split} \left\| \max_{2^{\ell}\le N\le 2^{\ell +1}} ND_N(\{S_k\alpha\})\right\|_{2p} &\le K \left(2^{\ell (1-\delta)}p^\delta + 2^{\ell /2} \sqrt{p} \left(2^{(\ell+1)\delta}/p^\delta\right)^{\beta\gamma/2-1}\right)\\
&=K \left(2^{\ell (1-\delta)}p^\delta + 2^{\ell /2} \sqrt{p}\, 2^{(\ell +1)(1/2-\delta)}p^{\delta-1/2}\right)\\
&\le K 2^{\ell (1-\delta)} p^\delta \end{split} \]
for any integers $\ell \ge 0$ and $p \ge 1$. Choosing $p\sim \log \ell$ and using the Markov inequality we get, for a sufficiently large constant $B>0$,
\[ \P \left( \max_{2^{\ell}\le N\le 2^{\ell+1}} ND_N(\{S_k\alpha\})\ge B 2^{\ell (1-\delta)} p^\delta \right)\le \left( \frac{K2^{\ell (1-\delta)} p^\delta}{B2^{\ell (1-\delta)} p^\delta}  \right)^{2p} \le 4^{-2p}\le \ell^{-2}.\]
Using the Borel--Cantelli lemma we get
$$  \max_{2^{\ell}\le N\le 2^{\ell +1}} ND_N(\{S_k\alpha\}) =O\left( 2^{\ell (1-\delta)} p^\delta \right)=O\left( 2^{\ell (1-\delta)} (\log\log 2^{\ell})^\delta \right) \qquad \text{a.s.},$$
proving the second estimate in (\ref{main}).

Assume now $1\le \gamma\le 2/\beta$. Then $\delta\ge 1/2$ and thus using Lemma \ref{lem:1} and Corollary \ref{gammadioph}  we get
\[ \left\| \max_{2^{\ell}\le N\le 2^{\ell +1}} ND_N(\{S_k\alpha\})\right\|_{2p} \le K \left(2^{\ell /2}p^\delta+2^{\ell /2} \sqrt{p}\, \ell^s \right) \le K 2^{\ell /2} \ell^s \sqrt{p} \]
for any integers $\ell \ge 1$, $1 \le p \le \ell^{s/\delta}$, where $s=1$ if $0<\beta<2$, and $s=2$ if $\beta =2$. Choosing again $p\sim \log \ell$ and using the Markov inequality we get, for a sufficiently large constant $B$,
\begin{equation*}
 \P \left( \max_{2^{\ell} \le N\le 2^{\ell +1}} ND_N( \{S_k\alpha\}) \ge B 2^{\ell /2} \ell^s \sqrt{p} \right) \le 4^{-2p}\le \ell^{-2}.
\end{equation*}
Hence the Borel--Cantelli lemma yields the first estimate in (\ref{main}).

If in Theorem \ref{theoremA} we assume (\ref{secondcond}), the argument is the same, using the fact that in this case by Lemma \ref{lem:1} we have (\ref{5}) with $\beta=1$.
\end{proof}

Corollary \ref{corollary} shows that the random variable with distribution $\P (X_1=1)= \P (X_1=2)=1/2$ satisfies the conditions of Theorem \ref{theoremA} (ii), proving the upper bounds in Proposition \ref{mainprop1}. To see the upper bounds in Proposition \ref{mainprop2} note that the condition $c_1 x^{-\beta} \le \P (|X_1| \ge x)$ clearly implies \eqref{heavytail}, and so according to Proposition \ref{examplesprop} (ii), Theorem \ref{theoremA} (i) applies. Finally, the upper bounds in Proposition \ref{mainprop3} follow from Theorem \ref{theoremA} (i) with $\beta =2$ and Proposition \ref{examplesprop} (i).

\section{Proof of the lower bounds}

We start by proving two general lower bounds of independent interest.

\begin{lem}\label{proplower2} Let $X_1, X_2, \ldots$ be integer-valued random variables, let $S_k=\sum_{j=1}^k X_j$, and let $\alpha \in \R$ be irrational such that $\left\| q \alpha \right\| \le C q^{-\gamma}$ for infinitely many $q \in \mathbb{N}$ with some constants $\gamma \ge 1$ and $C>0$. Assume that $S_k=O \left( \psi (k) \right)$ a.s., where $\psi (k)$ is a nondecreasing sequence of positive reals. Then
\begin{equation*}
D_N \left( \{ S_k \alpha \} \right) = \Omega \left( \psi (N+1)^{-1/\gamma} \right) \quad \text{a.s.}
\end{equation*}
\end{lem}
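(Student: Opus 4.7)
My approach is a direct empty-interval argument exploiting the fact that when $\|q\alpha\|$ is small, the multiples $m\alpha\pmod 1$ of an integer $m$ of controlled size cluster near the $q$ equispaced rationals $E_q=\{0,1/q,\dots,(q-1)/q\}$. Fix a realization on which $|S_k|\le M\psi(k)$ for every $k$, with $M<\infty$ random; we may enlarge $M$ and $C$ so that $M,C\ge 1$. If $\psi$ is bounded then $S_k$ takes only finitely many integer values, so $\{S_k\alpha\}$ lies in a finite subset of $[0,1)$ and $D_N$ is bounded away from zero, making the claim trivial; hence we may assume $\psi(k)\to\infty$. Let $q_1<q_2<\cdots$ be the positive integers with $\|q_i\alpha\|\le Cq_i^{-\gamma}$, and write $q_i\alpha=p_i+\delta_i$ with $\gcd(p_i,q_i)=1$ and $|\delta_i|=\|q_i\alpha\|$, so that $S_k\alpha=S_kp_i/q_i+S_k\delta_i/q_i$.

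For each $i$, set $A=4CM$ and let $N_i$ be the smallest integer $N$ with $\psi(N+1)\ge q_i^\gamma/A^2$; this is well-defined for all sufficiently large $i$, and $N_i\to\infty$. By minimality $\psi(N_i)<q_i^\gamma/A^2$, hence for every $k\le N_i$,
\[
\bigl|S_k\delta_i/q_i\bigr| \le \frac{M\psi(N_i)\,Cq_i^{-\gamma}}{q_i} < \frac{MC}{A^2\,q_i} = \frac{1}{4Aq_i}.
\]
Since $\gcd(p_i,q_i)=1$, the fractional part $\{S_kp_i/q_i\}$ equals some $r/q_i\in E_{q_i}$, so $\{S_k\alpha\}$ is within $1/(4Aq_i)$ of $E_{q_i}$ modulo $1$. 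The union of the $q_i$ open arcs of radius $1/(4Aq_i)$ around the points of $E_{q_i}$ has total length $1/(2A)\le 1/2$, so its complement in $[0,1)$ (with endpoints identified) contains an arc of length at least $(1-1/(2A))/q_i\ge 1/(2q_i)$ that is disjoint from $\{\{S_k\alpha\}:k\le N_i\}$. Therefore $D_{N_i}(\{S_k\alpha\})\ge 1/(2q_i)$.

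By the defining inequality for $N_i$, $q_i\le A^{2/\gamma}\psi(N_i+1)^{1/\gamma}$, and so
\[
D_{N_i}(\{S_k\alpha\})\ge \frac{1}{2q_i}\ge \frac{1}{2A^{2/\gamma}}\,\psi(N_i+1)^{-1/\gamma}.
\]
Since $N_i\to\infty$ a.s., this yields $\limsup_N D_N(\{S_k\alpha\})\psi(N+1)^{1/\gamma}>0$ a.s., which is the claimed $\Omega$-bound. The main technical point is coordinating the choice of $N_i$ with $q_i$ so that the Diophantine gap of size $\asymp 1/q_i$ is simultaneously comparable to the target $\psi(N_i+1)^{-1/\gamma}$; this is precisely what the coupling $\psi(N_i+1)\asymp q_i^\gamma$ built into the definition of $N_i$ accomplishes, and it is the only place where the nondecreasing hypothesis on $\psi$ is used.
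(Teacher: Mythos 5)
Your proof is correct and follows essentially the same route as the paper's: couple $N$ to a good denominator $q$ via $\psi(N)\lesssim q^{\gamma}\lesssim\psi(N+1)$, show every $\{S_k\alpha\}$ with $k\le N$ lies within $O(1/q)$ of a multiple of $1/q$, and read off an empty interval of length $\asymp 1/q$. One cosmetic remark: your assertion that one can take $\gcd(p_i,q_i)=1$ is neither always true nor needed, since $\{S_k p_i/q_i\}$ is a multiple of $1/q_i$ regardless.
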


Note that here we allow $X_1, X_2, \ldots$ to be degenerate, in which case the sequence $(S_n)$ is a deterministic sequence of integers.

\begin{proof}[Proof of Lemma \ref{proplower2}]
If $\psi (k)=O(1)$, then the sequence $\{ S_k \alpha \}$ attains only finitely many points, and thus trivially $D_N (\{ S_k \alpha \}) = \Omega (1)$ a.s. We may therefore assume $\psi (k) \to \infty$ as $k \to \infty$. Let $K>0$ be a random variable such that $|S_k| \le K \psi (k)$ for every $k \in \mathbb{N}$.

Let $q \in \mathbb{N}$ with $q > (3CK \psi (1))^{1/\gamma}$ be such that $\left\| q \alpha \right\| = |q \alpha -p| \le C q^{-\gamma}$, where $p=p(q)$ denotes the integer closest to $q \alpha$. Let $N=N(q)$ be the largest positive integer such that $\psi (N) < q^{\gamma}/(3CK)$, i.e.\ $\psi (N) < q^{\gamma}/(3CK) \le \psi (N+1)$. Note that
\[ \left| S_k \alpha - \frac{S_k p}{q} \right| = |S_k| \frac{\left\| q \alpha \right\|}{q} \le K \psi (N) \frac{C q^{-\gamma}}{q} < \frac{1}{3q} \]
holds for any $k=1,\dots, N$. This means that $S_k \alpha$ is in the open neighborhood of some integral multiple of $1/q$ with radius $1/(3q)$. In particular, none of the points $\{ S_k \alpha \}$, $k=1, \dots, N$ lies in $\left[ 1/(3q), 2/(3q) \right] \subset [0,1]$. By the definition of discrepancy we thus have
\begin{equation}\label{DNlowerestimate}
D_N (\left\{ S_k \alpha \right\}) \ge \frac{1}{3q} \ge \frac{1}{3 \left( 3 CK \psi (N+1) \right)^{1/\gamma}} .
\end{equation}
Clearly there are only finitely many $q \in \mathbb{N}$ for which $N(q)$ is a given integer, therefore the existence of infinitely many $q \in \mathbb{N}$ with $\left\| q \alpha \right\| \le C q^{-\gamma}$ implies the existence of infinitely many $N \in \mathbb{N}$ for which \eqref{DNlowerestimate} holds.
\end{proof}

\begin{lem} \label{proplower3} Let $X_1, X_2, \ldots$ be integer-valued i.i.d.\ random variables with characteristic function $\varphi$, and let $S_k=\sum_{j=1}^k X_j$. Suppose that $|1-\varphi (x)| \le c |x|^{\beta}$ for all $x \in \R$ with some constants $c>0$ and $0<\beta \le 2$. Assume further that $\left\| q \alpha \right\| \le Cq^{-\gamma}$ for infinitely many $q \in \mathbb N$  with some constants $\gamma \ge 1$ and $C>0$. Then
\begin{equation*}
D_N \left( \{ S_k \alpha \} \right) = \Omega \left( N^{-1/(\beta\gamma)} \right) \quad \text{a.s.}
\end{equation*}
\end{lem}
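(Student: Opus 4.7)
My plan is to derive a uniform tail bound for the partial sums $S_N$ from the characteristic–function hypothesis, and then to mimic the deterministic argument behind Lemma \ref{proplower2} on a sequence of \emph{independent} blocks so that the standard Borel–Cantelli lemma can be applied. First, from $|1-\varphi(t)|\le c|t|^\beta$ together with the elementary bound $|1-\varphi(t)^N|\le N|1-\varphi(t)|$, so that $1-\mathrm{Re}\,\varphi(t)^N\le Nc|t|^\beta$, combined with the classical inequality $P(|Y|\ge 2/T)\le T^{-1}\!\int_{-T}^{T}(1-\mathrm{Re}\,\varphi_Y(t))\,dt$ applied to $Y=S_N$ with $T=2/M$, one obtains
\[ P(|S_N|\ge M)\le C_1\, N/M^\beta \qquad (N\ge 1,\, M>0) \]
with $C_1$ depending only on $c,\beta$. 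Ottaviani's inequality then upgrades this to $P(\max_{1\le k\le N}|S_k|>2M)\le 2C_1 N/M^\beta$ whenever $C_1 N/M^\beta\le 1/2$.

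The case $\beta\gamma\le 1$ is implied by the trivial pointwise bound $D_N\ge 1/(2N)$, so assume $\beta\gamma>1$. I will inductively select a subsequence $(q_n)$ of integers with $\|q_n\alpha\|\le Cq_n^{-\gamma}$ growing so fast that $q_n^{\beta\gamma-1}$ eventually dominates any prescribed multiple of $T_{n-1}:=N_1+\cdots+N_{n-1}$, where $M_n:=q_n^\gamma/(6C)$ and $N_n:=\lfloor\kappa M_n^\beta\rfloor$ with $\kappa$ a small absolute constant chosen so that the events
\[ A_n := \Bigl\{\max_{T_{n-1}<k\le T_n}|S_k-S_{T_{n-1}}|\le M_n\Bigr\} \]
satisfy $P(A_n)\ge 1/2$ via the maximal inequality above. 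Since $A_n$ depends only on $X_{T_{n-1}+1},\dots,X_{T_n}$, the events $A_n$ are independent, and the second Borel–Cantelli lemma gives $A_n$ infinitely often almost surely.

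The final step converts the occurrence of $A_n$ into a lower bound on $D_{T_n}$, exactly as in the proof of Lemma \ref{proplower2}. Writing $\alpha=p_n/q_n+\tau_n$ with $|\tau_n|\le Cq_n^{-\gamma-1}$, for every $k\in(T_{n-1},T_n]$ on $A_n$ one has $(S_k-S_{T_{n-1}})\alpha\equiv j_k/q_n\pmod{1}$ up to an error of at most $M_n|\tau_n|\le 1/(6q_n)$, so each $\{S_k\alpha\}$ with $k\in(T_{n-1},T_n]$ lies within $1/(6q_n)$ of the shifted $q_n$-point grid $\{\{S_{T_{n-1}}\alpha+j/q_n\}\}_{j=0}^{q_n-1}$. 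The complement of this neighborhood contains an interval $I_n$ of length $2/(3q_n)$ avoided by every such point, whence
\[ D_{T_n}\ge |I_n|-\frac{T_{n-1}}{T_n}\ge \frac{2}{3q_n}-\frac{T_{n-1}}{T_n}, \]
and the growth condition on $(q_n)$ forces $T_{n-1}/T_n\le 1/(3q_n)$, giving $D_{T_n}\ge 1/(3q_n)\asymp T_n^{-1/(\beta\gamma)}$ along this subsequence, as required.

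The main obstacle is balancing two opposing demands on the block length $N_n$: it must be small enough ($N_n\lesssim M_n^\beta\asymp q_n^{\beta\gamma}$) for Ottaviani's bound to give $P(A_n)\ge 1/2$, yet large enough that the contribution $T_{n-1}/T_n$ from earlier blocks is negligible compared with $1/q_n$. Both constraints are simultaneously satisfiable precisely because $\beta\gamma>1$ and the set of admissible $q$'s is unbounded, which permits $q_n$ to be chosen super-polynomially large relative to all previously fixed quantities.
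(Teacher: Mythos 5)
Your argument is correct, and it reaches the paper's conclusion by a genuinely different probabilistic route. The common core is the same: the truncation inequality applied to $\varphi^N$ gives $\P(|S_N|\ge M)\ll N/M^{\beta}$, a maximal inequality upgrades this to the running maximum, and the Diophantine step (points $S_k\alpha$ trapped in a $1/(6q)$-neighborhood of a $1/q$-grid, hence an avoided interval of length comparable to $1/q\asymp N^{-1/(\beta\gamma)}$) is essentially identical to the paper's. The difference is in how ``infinitely often, almost surely'' is obtained. The paper keeps the natural events $A_q=\{\max_{k\le N(q)}|S_k|<C_1N(q)^{1/\beta}\}$ with $N(q)=[aq^{\beta\gamma}]$, which are highly dependent; it only shows each has probability bounded below (via L\'evy's inequality with medians), deduces that infinitely many occur with positive probability, and then invokes the Hewitt--Savage zero-one law to get probability one. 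You instead manufacture independence: a sparsely chosen subsequence $q_n$ of admissible denominators, disjoint blocks of increments, Ottaviani in place of L\'evy, and the second Borel--Cantelli lemma. This buys you a completely elementary zero-one step (no exchangeability argument needed), but costs you the correction term $T_{n-1}/T_n$ from the earlier blocks, which you correctly absorb by choosing $q_n^{\beta\gamma-1}$ large compared with $T_{n-1}$ --- forcing the case split $\beta\gamma>1$ versus the trivial case $\beta\gamma\le 1$ (handled by $D_N\ge 1/(2N)$), a split the paper does not need since its events always involve the full initial segment. Both arguments are complete; yours is slightly longer in bookkeeping (the inductive choice of $q_n$, $\kappa$ depending on $c,\beta$ rather than absolute) but avoids the Hewitt--Savage law, while the paper's is shorter at the price of a less standard zero-one argument.
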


\begin{proof} By the assumption on $\varphi$, the characteristic function of $S_n/n^{1/\beta}$ satisfies $|1-\varphi^n (x/n^{1/\beta})| \le n |1-\varphi (x/n^{1/\beta})| \le c |x|^{\beta}$ for any $x \in \R$. Using a well-known method to estimate the tail probabilities of a random variable in terms of its characteristic function (see e.g.\ \cite[p.\ 171--172, Proposition 8.29]{BRE}) we obtain
\begin{equation}\label{sntails}
\P \left( |S_n|/n^{1/\beta} >t \right) \ll t \int_0^{1/t} \left( 1-\mathrm{Re} \varphi^n (x/n^{1/\beta}) \right) \, \mathrm{d}x \ll c t^{-\beta}
\end{equation}
for any $t>0$ with a universal implied constant.

Let $M_n=\max_{1 \le k \le n} |S_k|$, and let $\mu_k$ denote a median of $S_k$. From \eqref{sntails} we get $|\mu_k| \le c' k^{1/\beta}$ with some constant $c'>0$. L\'evy's inequality (see e.g.\ \cite[p.\ 259]{LO}) and \eqref{sntails} hence give, for all $t>c'$,
\[ \begin{split} \P \left( M_n \ge t n^{1/\beta} \right) &\le \P \left( \max_{1 \le k \le n}|S_k + \mu_{n-k}| \ge (t-c') n^{1/\beta}  \right) \\ &\le 2 \P \left( |S_n| \ge (t-c')n^{1/\beta} \right) \\ &\ll c(t-c')^{-\beta} . \end{split} \]
In particular, there exist constants $C_1, C_2>0$ such that $\P \left( M_n \ge C_1 n^{1/\beta} \right) \le 1-C_2$ for all $n \in \mathbb{N}$.

We will use a trivial version of the Borel--Cantelli lemma stating that if $A_1, A_2, \ldots$ are arbitrary events with $\P(A_k)\ge \lambda$ $(k=1, 2, \ldots)$, then with probability $\ge \lambda$, infinitely many $A_k$ will occur. By the assumptions, there exists an infinite subset $H$ of $\mathbb N$ such that $\left\| q \alpha \right\| \le C q^{-\gamma}$ for $q \in H$. For each $q\in H$, let $N=N(q)= [aq^{\beta\gamma}]$, where $a$ is a small constant. Thus letting $A_q= \left\{M_{N(q)}< C_1 N(q)^{1/\beta}\right\}$, we have $\P(A_q) \ge C_2$ for all $q\in H$, and thus with probability $\ge C_2$ infinitely many of the $A_q$, $q\in H$ occur. By the Hewitt--Savage zero-one law (see e.g.\ \cite[p.\ 64, Corollary 3.50]{BRE}), this is actually true with probability 1. Choose now such a $q$; then $\left\| q \alpha \right\| = |q \alpha -p| \le C q^{-\gamma}$, where $p=p(q)$ denotes the integer closest to $q \alpha$. Hence for $N=N(q)$ on the set $A_q$ we have, for any $1\le k\le N$,
\begin{equation}\label{diff}
\left |S_k\alpha-\frac{S_k p}{q}\right|\le C \frac{|S_k|}{q^{\gamma+1}}\le C \frac{|M_N|}{q^{\gamma+1}} \le \frac{C C_1 N^{1/\beta}}{q^{\gamma+1}} \le
\frac{C C_1a^{1/\beta}}{q} \le \frac{1}{3q}
\end{equation}
provided $a$ is small enough. Since the $X_i$ are  integer-valued, the points $S_k p/q$ are integer multiples of $1/q$  and thus by (\ref{diff}) the points $S_k \alpha$ $(1\le k\le N)$ differ from each other by $\ge 1/(3q)$, and consequently with probability 1,
\begin{equation*}
D_N (\{S_k \alpha\}) \ge \frac{1}{3q}\ge C_3 N^{-1/(\beta\gamma)}
\end{equation*}
with some constant $C_3>0$ for infinitely many $N$, as stated.
\end{proof}

The lower bounds in Propositions \ref{mainprop1} (i),  \ref{mainprop2} (i), \ref{mainprop3} (i) are all special cases of Proposition \ref{generallower}. The lower bound in Proposition \ref{mainprop1} (ii) follows from Lemma \ref{proplower2} with $\psi (k)=k$. The lower bound in Proposition \ref{mainprop3} (ii) is a corollary of Lemma \ref{proplower3} with $\beta =2$. Indeed, note that if $\E X_1=0$ and $\E X_1^2 < \infty$, then $\varphi (x)=1-\E X_1^2 x^2 (1+o(1))$ as $x \to 0$, and hence $|1-\varphi (x)| \le cx^2$ with some constant $c>0$.

Finally, we claim that under the conditions of Proposition \ref{mainprop2} we have $|1-\varphi (x)| \le c|x|^{\beta}$ with some $c>0$. The lower bound in Proposition \ref{mainprop2} (ii) will thus follow from Lemma \ref{proplower3}. To see this, consider
\begin{equation}\label{finaleq}
|1-\varphi (x)| = \sqrt{\left( \E (1-\cos (xX_1)) \right)^2 + \left( \E \sin (xX_1) \right)^2} .
\end{equation}
To estimate the first term in \eqref{finaleq}, we will use $1-\cos (xX_1) \le (xX_1)^2/2$ if $|xX_1|<1$, and $1-\cos (xX_1) \le 2$ otherwise. Hence
\[ \begin{split} 1-\cos (xX_1) &\le \frac{x^2}{2} X_1^2 I_{\{ |X_1|<1/|x| \}} + 2 I_{\{ |X_1| \ge 1/|x| \}}, \\ \E (1-\cos (xX_1)) &\le \frac{x^2}{2} \E \left( X_1^2 I_{\{ |X_1|<1/|x| \}} \right) +2\P (|X_1| \ge 1/|x|) \\ &\le x^2 \int_0^{1/|x|} t \P (|X_1| \ge t) \, \mathrm{d}t +2\P (|X_1| \ge 1/|x|) .  \end{split} \]
The assumption $\P (|X_1| \ge x) \le c_2 x^{-\beta}$ thus shows that $\E (1-\cos (xX_1)) \ll |x|^{\beta}$. To estimate the second term in \eqref{finaleq}, we will use $\sin (xX_1) = xX_1 +O(|xX_1|^3)$ if $|xX_1| < 1$, and $|\sin (xX_1)| \le 1$ otherwise. We thus obtain
\[ \begin{split} \left| \E \sin (xX_1)\right| &\ll x \E (X_1 I_{\{ |X_1| < 1/|x| \}}) + |x|^3 \E \left( |X_1|^3 I_{\{ |X_1|<1/|x| \}} \right) + \P \left( |X_1| \ge 1/|x| \right) \\ &\le x \E (X_1 I_{\{ |X_1| < 1/|x| \}}) + |x|^3 \int_0^{1/|x|} 3t^2 \P (|X_1| \ge t) \, \mathrm{d}t + \P \left( |X_1| \ge 1/|x| \right) . \end{split} \]
By the assumption $\P (|X_1| \ge x) \le c_2 x^{-\beta}$ the last two terms are indeed $\ll |x|^{\beta}$. Considering the cases $0<\beta<1$, $\beta =1$ and $1<\beta<2$ separately, it is not difficult to see that the first term is also $\ll |x|^{\beta}$. Hence $|1-\varphi (x)| \ll |x|^{\beta}$, as claimed.

\end{document}